\numberwithin{equation}{section}
\newtheorem{theorem}{Theorem}[section]
\newtheorem{lemma}[theorem]{Lemma}
\newtheorem{proposition}[theorem]{Proposition}
\newtheorem{corollary}[theorem]{Corollary}
\newtheorem{definition}[theorem]{Definition}
\newtheorem{remark}[theorem]{Remark}
\renewcommand\tilde{\widetilde}
\newcommand\calI{\mathcal{I}}
\def\R{\mathbb{R}}
\def\pa{\partial}
\newcommand\calA{\mathcal{A}}
\newcommand\m{\mu}
\def\E{\mathcal{E}}
\def\N{\mathbb{N}}
\def\H{\mathcal{H}}
\def\LM#1{\hbox{\vrule width.2pt \vbox to#1pt{\vfill \hrule width#1pt
height.2pt}}}
\def\LL{{\mathchoice {\>\LM7\>}{\>\LM7\>}{\,\LM5\,}{\,\LM{3.35}\,}}}
\def\restr{{\LL}}
\renewcommand{\phi}{\varphi}
\def\1{\mathbf{1}}
\def\XXint#1#2#3{{\setbox0=\hbox{$#1{#2#3}{\int}$ }
\vcenter{\hbox{$#2#3$ }}\kern-.57\wd0}}
\def\eps{\varepsilon}
\renewcommand{\subset}{\subseteq}
\def\lt{\left}
\def\rt{\right}
\def\les{\lesssim}
\def\ges{\gtrsim}
\def\weaklim{\rightharpoonup}
\def\l{T}
\def\BXi{\overline{X}_i}
\def\BXik{\BXi^k}
\def\Bl{T_*}
\newcommand\dotX{\dot{X}}
\def\beps{\overline{\eps}}
\def\bphi{\overline{\phi}}
\begin{document}
 \title{Self-similar minimizers of a branched transport functional}
\author{ Michael Goldman}
\address{LJLL, Universit\'e Paris Diderot,  CNRS, UMR 7598}
 \email{goldman@math.univ-paris-diderot.fr}
 \subjclass{49N99, 35B36}
\date{}
\begin{abstract}\noindent
We solve here completely an irrigation problem from a Dirac mass to the Lebesgue measure for a two dimensional analog of a functional previously derived 
in the study of branched patterns in type-I superconductors. The minimizer we obtain is a self-similar tree.
\end{abstract}
\maketitle

\section{Introduction}
For a measure $\mu$ on $\R\times(a,b)$ such that $\mu=\mu_t\otimes dt$ with   $\mu_t=\sum_i \phi_i\delta_{X_i}$ for a.e. $t\in(a,b)$ for some (pairwise distinct) $X_i\in \R$, we  consider the branched transportation type functional (see Section \ref{secnot} for a more precise definition)
\begin{equation}\label{functional}
\E(\mu):= \int_{a}^b \sharp\{\phi_i\neq 0\} +\sum_i \phi_i |\dotX_i|^2 dt, 
\end{equation}
where   $\dotX_i$ denotes the time derivative of $X_i(t)$. This is indeed a branched transportation problem since by the Benamou-Brenier formula \cite{AGS, Sant,villani}, the second term is exactly the length of the curve $t\to \mu_t$ measured in the Wasserstein metric, while the first term forces concentration and thus branched structures.\\ 
Our main focus is the irrigation problem of the Lebesgue measure from a Dirac mass. By this we mean that we want to minimize the cost \eqref{functional} under the condition that the starting measure $\mu_a$ is a Dirac mass and that $\mu_t$ converges (weakly) to the Lebesgue measure as $t$ goes to $b$. This implies that the measure $\mu_t$ must infinitely refine as $t\to b$. 
We will also be interested in the case when both the initial and final measures are the Lebesgue measure. \\
More generally,   for two given  measures $\mu_{\pm}$ of equal mass, we study  the following Dirichlet problem 
\begin{equation}\label{minmainintro}
 \min_{\mu}\lt\{ \E(\mu) \ : \ \mu_a=\mu_- \ , \ \mu_b=\mu_+ \rt\},
\end{equation}
where the boundary condition is understood in the sense that $\mu_t\weaklim \mu_-$ as $t\to a$ and $\mu_t\weaklim \mu_+$ as $t\to b$.\\
Our main result is a full characterization of the minimizers of \eqref{minmainintro} in the case $\mu_-$ is a Dirac mass, $\mu_+$ is the Lebesgue measure restricted to an interval of length $\mu_-(\R)$ and $b-a$ is large enough. In order to fix notation, since the 
problem is invariant by translations, we may assume that $a=0$, $b=T$, $\mu_-= \phi \delta_{X}$ and $\mu_+=dx\restr[-\phi/2,\phi/2]$ for some $T, \phi>0$ and $X\in\R$. As will be apparent below, up to rescalings and shears, we may further normalize to $X=0$ and $\phi=1$, so that 
\[\mu_0=\delta_0 \qquad \textrm{and} \qquad \mu_T= dx\restr[-1/2,1/2].\]

In this case, as will become clearer in the proof, the threshold value $T=1/4$ naturally  appears. In order to state our main theorem, let us define for $t\in[0,1/4]$, the dyadically branching measure $\mu^*_t$ (see Figure \ref{figmin}). For $k\ge 0$, let $t_k:=\frac{1}{4}\lt(1-\left(\frac{1}{2}\rt)^{3k/2}\rt)$ be the branching times. 
We define recursively $\mu^*_t$ in the intervals $[t_{k-1},t_k]$.
Let $X_1^0=0$ and  $\mu^*_0=\delta_0$. Assume that $\mu^*_t$ is defined in $[0,t_{k-1}]$ and that $\mu^*_{t_{k-1}}=2^{-(k-1)}\sum_{i=1}^{2^{k-1}} \delta_{X_i^{k-1}} $.
For $t\in[t_{k-1},t_{k}]$ and $1\le i\le 2^k$, we now define $X_i^k(t)$. For this, let us divide $[-1/2,1/2]$ in $2^k$ intervals of equal size and let $\BXik$ be the barycenter of the $i-$th such interval i.e. 
 $\BXik:=\frac{-1}{2}+\frac{i-1}{2^{k}}+\frac{1}{2^{k+1}}$. We then let 
 \[X_i^k(t):=\frac{t-t_{k-1}}{\frac{1}{4}-t_{k-1}}\lt(\BXik-{X}_{ \lceil i/2\rceil}^{k-1}\rt)+{X}_{ \lceil i/2\rceil}^{k-1},\]
 and $\mu^*_t:= 2^{-k}\sum_{i=1}^{2^k}\delta_{X_i^k(t)}$. Notice that with this definition, for every $t\in[0,1/4)$, $k\in \N$ and $1\le i\le 2^k$, the mass at $X_i^k(t)$ is irrigating the interval
 $(\BXik-2^{-(k+1)},\BXik+2^{-(k+1)})$ and $X_i^k$ is moving at constant speed towards $\BXik$ (and would reach it at time $T=1/4$ if there were no further branching points). Our main theorem is the following
 \begin{figure}
  \begin{center}
  \includegraphics[height=6.5cm]{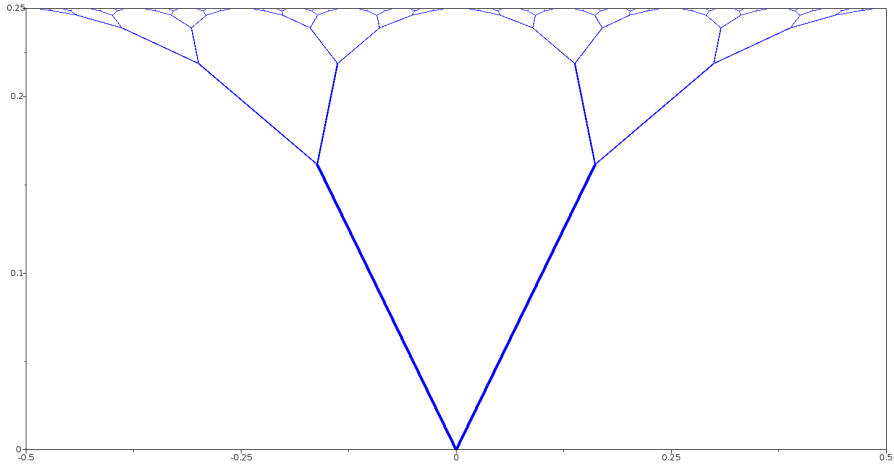}
   \label{figmin}
   \caption{The optimal configuration $\mu^*$}
 \end{center}
 \end{figure}

 \begin{theorem}\label{main}
  For $T=1/4$, $\mu_0=\delta_0$ and $\mu_T=dx\restr[-1/2,1/2]$, $\mu^*$ is the unique minimizer of \eqref{minmainintro}. Moreover, if $T\ge 1/4$, the unique minimizer of \eqref{minmainintro} is given by $\mu_t=\delta_0$ for $t\in[0,T-1/4]$ and
  $\mu_t=\mu^*_{t-(T-1/4)}$ for $t\in(T-1/4,T)$, with
  \[\E(\mu)=\frac{1}{2-\sqrt{2}} +T.\]
  
 \end{theorem}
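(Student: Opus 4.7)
My plan is to match an explicit computation of $\E(\mu^*) = 1/(2-\sqrt 2) + 1/4$ at $T = 1/4$ with a matching lower bound derived from a self-similar Bellman-type recursion, and then deduce uniqueness and extend to $T > 1/4$ by prepending a stationary phase.

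For the upper bound I would exploit the self-similarity of $\mu^*$: its restriction to $[t_1, 1/4]$ consists of two independent subtrees which, after rescaling space by $1/2$, mass by $1/2$ and time by $1/4 - t_1 = 2^{-7/2}$, are each a copy of $\mu^*$ whose root Dirac is offset from the barycentre of its target sub-interval by $2^{-5/2}$ in rescaled coordinates. Combining the invariance $\E\mapsto\lambda^{3/2}\E$ under $(t,x,\phi)\mapsto(\lambda^{3/2}t,\lambda x,\lambda\phi)$ (so the centred minimum has the form $c(T',\phi)=\phi^{3/2}h(T'/\phi^{3/2})$) with the shear identity that an initial offset $y_0$ adds exactly $y_0^2\phi/T'$ to the cost (verified via the linear time-shear $s(t)=y_0(t/T'-1)$ together with $\int_0^{T'}\!\sum\phi_i\dot X_i\,dt=-y_0\phi$ obtained by testing the continuity equation against $x$), each subtree contributes $2^{-3/2}E^*+2^{-9/2}$. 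The first phase on $[0,t_1]$ costs $3t_1$ (two particles of mass $1/2$ moving at unit speed), so
\[
E^* \;=\; 3t_1 + 2\bigl(2^{-3/2}E^* + 2^{-9/2}\bigr) \;=\; 3t_1 + 2^{-1/2}E^* + 2^{-7/2},
\]
which solves to $E^* = (5+2\sqrt 2)/4 = 1/(2-\sqrt 2)+1/4$.

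For the lower bound --- the main analytical obstacle --- let $c(T',\phi)$ denote the infimum over centred configurations $\phi\delta_0\to dx\restr[-\phi/2,\phi/2]$ of duration $T'$. The hardest step is to justify restricting to tree structures with symmetric binary first splits, ruling out higher-arity and unequal-mass splits, which I would approach through a convexity and symmetrisation argument exploiting the $x\to-x$ symmetry of the boundary data. Granted this reduction, analysing the first-branching parameters (waiting time $\tau_w$, branching duration $\tau$, spread $a$) yields
\[
c(T',\phi) \;=\; \min_{\tau_w,\tau,a}\Bigl[\tau_w + 2\tau + \phi a^2/\tau + 2c(T'-\tau_w-\tau,\phi/2) + (a-\phi/4)^2\phi/(T'-\tau_w-\tau)\Bigr],
\]
whose inner optimum in $a$ is $a=\phi\tau/\bigl(4(T'-\tau_w)\bigr)$; this collapses the $a$-dependence to the constant $\phi^3/\bigl(16(T'-\tau_w)\bigr)$. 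Using the scaling and the substitution $\beta = 2^{3/2}(T'-\tau_w-\tau)/\phi^{3/2}$, the remaining minimisation in $(\tau_w,\tau)$ reduces to a one-variable equation for $h$; its solution on $\alpha\geq 1/4$ is $h(\alpha) = \alpha + 1/(2-\sqrt 2)$, attained at $\tau_w = T'-\phi^{3/2}/4$ and $\beta = 1/4$ (so that the next level operates at the same dimensionless parameter, confirming self-similarity). Hence $c(T,1) = T + 1/(2-\sqrt 2)$, matching the upper bound.

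Uniqueness follows from strict convexity of the energy in the relevant parameters (in particular strict quadratic convexity in $a$ at each split, together with the off-centred shear being a rigid bijection), which pins the $\mu_t$ profile down uniquely as $\mu^*$. The case $T>1/4$ is subsumed in the same analysis: at the optimum, $\tau_w = T-1/4 > 0$, corresponding to a stationary phase $\mu_t\equiv\delta_0$ on $[0,T-1/4]$ followed by $\mu^*$ on $[T-1/4,T]$, with total cost $(T-1/4)+E^* = T + 1/(2-\sqrt 2)$, exactly as stated.
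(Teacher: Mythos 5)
Your upper bound via the self-similar recursion is essentially the paper's construction and the arithmetic checks out ($E(1/4)=(5+2\sqrt2)/4=1/(2-\sqrt2)+1/4$). The shear identity you invoke ($E(T,\phi,X)=E(T,\phi)+\phi|X|^2/T$) and the scaling $E(T,\phi)=\phi^{3/2}E(T\phi^{-3/2})$ are exactly the paper's Lemma~\ref{lemrescale}, and your collapse of the $a$-minimisation to $\phi^3/(16(T'-\tau_w))$ is a correct computation.

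The genuine gap is precisely the step you flag and then wave away: ``restricting to tree structures with symmetric binary first splits \ldots which I would approach through a convexity and symmetrisation argument exploiting the $x\to-x$ symmetry of the boundary data.'' This is where the real work of the paper lives, and the proposed route does not go through. First, reflecting the boundary data in $x$ does not preclude higher arity: a symmetric configuration can perfectly well have $N=3$ branches with masses $(\phi,1-2\phi,\phi)$, or $N=4$ with $(\phi_1,\phi_2,\phi_2,\phi_1)$, and comparing their energies against the binary split is a genuine quantitative question, not a symmetry one. Moreover, symmetrising a minimiser (averaging with its reflection) is not available here because the functional, through $\sharp\{\phi_i\neq0\}$, is not convex along such interpolations; and the convexity of $T\mapsto E(T)$, which might have given a clean monotonicity argument, is explicitly stated as an open problem at the end of the paper. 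The paper instead establishes a first-variation/equipartition identity at the largest branching time $T_*$ (equation~\eqref{mainestim}), rewrites the recursion as the non-recursive variational formula~\eqref{alternativeq}, introduces
\[
\alpha_N=\inf\Bigl\{\tfrac{1-\sum\phi_i^3}{1-\sum\phi_i^{3/2}}:\ \phi_i\ge0,\ \textstyle\sum\phi_i=1\Bigr\},
\]
derives from~\eqref{alternativeq} both $T_*\le1/4$ and the criterion~\eqref{criterionalphaN} bounding $N$ in terms of $\alpha_N$ (giving $N\le6$ a priori), rules out $3\le N\le6$ by a computer-assisted lower bound on $\alpha_N$ (Lemma~\ref{lem:Nge3} and Appendix~\ref{appendix}), and finally handles $N=2$ by a separate two-parameter inequality in $(T_*,\phi)$ (Proposition~\ref{prop:Neq2}). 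None of this is replaceable by the sketch in the proposal.

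Two smaller points. The Bellman recursion you write down imposes a ``spreading phase'' of duration $\tau$ in which the two mass points move apart without further branching; this is not the most general competitor, so without a separate argument that the optimal configuration cannot re-branch during this phase, the minimum over $(\tau_w,\tau,a)$ is only an upper bound on $c(T',\phi)$, not a lower bound. (The paper avoids this by letting the subtree functional $E(T\phi_i^{-3/2})$ absorb all post-split behaviour, with the waiting phase emerging a posteriori from the definition of $T_*$.) Finally, the uniqueness claim needs the full structural output of Proposition~\ref{propTstar} (that $N=2$ and $\phi_1=\phi_2=1/2$ are \emph{forced}) at every scale; strict convexity in $a$ alone fixes only the affine trajectories between branching events, not the branching pattern itself.
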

As a consequence, we obtain the following corollary (see Lemma \ref{lemrescale} for the exact definitions of the rescaling and shear)
 \begin{corollary}\label{cormain}
  For $X\in \R$, $T,\phi>0$ with $T\phi^{-3/2}\ge 1/4$, the unique minimizer of \eqref{minmainintro} with $\mu_0=\phi \delta_X$ and $\mu_T= dx\restr [-\phi/2,\phi/2]$ is given by a suitably sheared and rescaled version of the optimal measure for $X=0$, $\phi=1$ and $\widehat{T}=T\phi^{-3/2}$. Moreover
  \[
   \E(\mu)=\phi^{3/2} \frac{1}{2-\sqrt{2}} +T+\frac{\phi}{T} |X|^2.
  \]

 \end{corollary}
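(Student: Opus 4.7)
The plan is to reduce the general Dirichlet problem of the corollary to the normalized problem of Theorem~\ref{main} by composing two natural symmetries of the functional $\E$: a parabolic rescaling that normalizes the total mass to $1$ and the target interval to $[-1/2,1/2]$, and a time-dependent shear that moves the initial Dirac from $X$ to $0$. The precise statements and the bijectivity between the corresponding classes of admissible measures should be the content of the cited Lemma~\ref{lemrescale}; here I only describe how to use them.

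For the rescaling, I would check the behavior of $\E$ under dilating space by $\lambda$, time by $\tau$ and multiplying every atomic mass $\phi_i$ by a factor $\phi$ (so that the total mass becomes $\phi$). The cardinality term picks up a factor $\tau$ and the kinetic term a factor $\phi\lambda^2/\tau$, and the two agree (and equal $\tau$) precisely when $\tau=\lambda\sqrt{\phi}$. Picking $\lambda=\phi$ so as to map $[-1/2,1/2]$ onto $[-\phi/2,\phi/2]$ then forces $\tau=\phi^{3/2}$, which identifies $\widehat T:=T\phi^{-3/2}$ as the natural dimensionless time; the assumption $\widehat T\ge 1/4$ is exactly the hypothesis of Theorem~\ref{main}. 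The rescaled image of $\mu^*$ therefore has energy $\phi^{3/2}\bigl[\tfrac{1}{2-\sqrt 2}+\widehat T\bigr]=\phi^{3/2}\tfrac{1}{2-\sqrt 2}+T$ and is the unique minimizer of the centered (i.e.\ $X=0$) problem.

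For the shear, given a competitor $\mu$ with $\mu_0=\phi\delta_X$ and $\mu_T=dx\restr[-\phi/2,\phi/2]$, I would translate each trajectory by $X_i(t)\mapsto X_i(t)-X(1-t/T)$ to obtain $\widehat\mu$ with $\widehat\mu_0=\phi\delta_0$ and $\widehat\mu_T=\mu_T$. The cardinality term is invariant, while expanding the kinetic term produces the old kinetic term, a constant quadratic correction $(|X|^2/T^2)\int_0^T\sum_i\phi_i\,dt=\phi|X|^2/T$, and a cross term proportional to $\int_0^T\sum_i\phi_i\dot X_i\,dt$. This last integral equals $\phi$ times the displacement of the centre of mass $\bar X(t):=\sum_i\phi_i X_i(t)/\phi$, which is continuous across branching events and has equal endpoint values (both equal to $0$ for the centred boundary data); hence the cross term vanishes and one obtains the exact identity $\E(\mu)=\E(\widehat\mu)+\phi|X|^2/T$.

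Combining these two reductions with Theorem~\ref{main} then yields existence, uniqueness, and the announced value of $\E$. The main obstacle in filling in the details is the identification of the shear as an exact symmetry up to the additive constant $\phi|X|^2/T$, namely the vanishing of the cross term, which relies on the conservation of $\sum_i\phi_i X_i$ across branching; together with the checking that the two transformations are bijections between the respective classes of admissible measures, this is presumably what Lemma~\ref{lemrescale} takes care of.
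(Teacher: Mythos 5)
Your proposal is correct and takes essentially the same route as the paper: the corollary is not given a standalone proof but follows by composing the shear identity $E(T,\phi,X)=E(T,\phi)+\frac{\phi}{T}|X|^2$ and the parabolic rescaling $E(T,\phi)=\phi^{3/2}E(T\phi^{-3/2})$ of Lemma~\ref{lemrescale} with the value $E(\widehat T)=\frac{1}{2-\sqrt 2}+\widehat T$ for $\widehat T\ge 1/4$ from Theorem~\ref{main}, together with the fact that these two transformations are energy-affine bijections between the admissible classes, so uniqueness transfers. Your scaling analysis ($\tau=\lambda\sqrt\phi$, $\lambda=\phi$, hence $\tau=\phi^{3/2}$) reproduces the lemma exactly.

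One bookkeeping slip in the shear step: as written, your cross term is $\frac{X}{T}\int_0^T\sum_i\phi_i\dot X_i\,dt$ with $X_i$ the trajectories of the \emph{uncentered} competitor $\mu$. That integral does \emph{not} vanish; the centre of mass of $\mu$ moves from $X$ to $0$, so it equals $-\phi X$, and it is the combination cross term $+$ quadratic correction $=-2\frac{\phi|X|^2}{T}+\frac{\phi|X|^2}{T}=-\frac{\phi|X|^2}{T}$ that gives $\E(\widehat\mu)=\E(\mu)-\frac{\phi|X|^2}{T}$. Alternatively, if you expand $\E(\mu)$ in terms of $\widehat\mu$ (write $\dot X_i=\dot{\widehat X}_i-X/T$), the cross term involves the \emph{centered} trajectories $\widehat X_i$ and does vanish, which is what you assert and is the computation in the paper's proof of Lemma~\ref{lemrescale}. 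Either way the final identity is correct; just match the trajectories to the direction of the expansion. Also note that the vanishing of $\int_0^T\sum_i\phi_i\dot{\widehat X}_i\,dt$ requires a small argument near $t=T$ (infinitely many branches accumulate there), which the paper handles by a cut-off at $T-\eps$ and a H\"older bound on the tail; ``continuous across branching events'' alone is not quite enough.
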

As an application of Corollary \ref{cormain}, we will  further derive a full characterization of symmetric (with respect to $t=0$) minimizers in the case $a=-b=-T$, $\mu_{\pm}=dx\restr [-1/2,1/2]$ and $T\ge 1/4$ (see Theorem \ref{structureTlarge}).\\ 

The proof of Theorem \ref{main} is based on the tree structure of the minimizers of \eqref{minmainintro} (see Proposition \ref{reg}) which together with invariance by scaling and shearing (Lemma \ref{lemrescale}) leads to a recursive characterization of the minimizers. Indeed, if we let 
\[
 E(T):=\min\{ \E(\mu) \ : \ \mu_0=\delta_0, \ \mu_T= dx\restr [-1/2,1/2]\},
\]
then (see \eqref{recursive})
\begin{equation}\label{eq:introrecurE}
 E(T)=\min_{\sum_{i=1}^N \phi_i=1} \sum_{i=1}^N \phi_i^{3/2} E(T\phi_i^{-3/2}) +\frac{1}{12T} \lt(1-\sum_{i=1}^N \phi_i^3\rt).
\end{equation}
This formula reflects the fact that if at level $T$, the minimizer branches into $N$ pieces of respective masses $\phi_i$ then up to rescaling and shearing, each of the subtrees
solves the exact same problem as the original one (connecting a Dirac mass to the Lebesgue measure). In particular, if we define $T_*$ to be the first branching time, meaning that if $T>T_*$ then the minimizer of $E(T)$ cannot branch for a time $T-T_*$, we may use that $T_* \phi_i^{-3/2}\ge T_*$ to obtain 
\[
 E(T_* \phi_i^{-3/2})= E(T_*)+ T_*(\phi_i^{-3/2}-1)
\]
and then rewrite \eqref{eq:introrecurE} in the purely analytical form (see \eqref{alternativeq})
\begin{equation}\label{eq:introalternativeq}
 \frac{E(\Bl)-\Bl}{\Bl}=\min_{\sum_{i=1}^N\phi_i=1} \frac{(N-1)+\frac{1}{12\Bl^2} \lt(1-\sum_{i=1}^N \phi_i^{3}\rt)}{1-\sum_{i=1}^N \phi_i^{3/2}}.
\end{equation}
The idea of the proof of Theorem \ref{main} is to use \eqref{eq:introalternativeq} to prove that $T_*=1/4$ and that the corresponding minimizer has exactly two branches of mass $1/2$ at time zero. Once this is proven, the conclusion is readily
reached thanks to the recursive nature of the problem.\\
In order to prove that $T_*=1/4$ and $N=2$, we introduce for fixed $N\ge 2$ the quantity
\[
 \alpha_N:=\inf_{\phi_i\ge 0}\lt\{ \frac{1-\sum_{i=1}^N \phi_i^3}{1-\sum_{i=1}^N \phi_i^{3/2}} \ : \ \sum_{i=1}^N \phi_i=1\rt\}.  
\]
By \eqref{eq:introalternativeq}, if at time $T_*$ the minimizer has $N$ branches then since for $\sum_{i=1}^N \phi_i=1$ there holds $1-\sum_{i=1}^N \phi_i^{3/2}\le 1-N^{-1/2}$, we have  the lower bound
 \[
  \frac{E(\Bl)-\Bl}{\Bl}\ge  \frac{N-1}{1-N^{-1/2}} +\frac{\alpha_N}{12 \Bl^2}.
 \]
In Proposition \ref{estimTstar}, we use this together with an upper bound on $E(\Bl)$ given by a dyadically branching construction to obtain both that $\Bl\le 1/4$ and that a lower bound on $\alpha_N$ gives a corresponding upper bound on $N$ (see \eqref{criterionalphaN}).
These lower bounds on $\alpha_N$ are obtained in Lemma \ref{lem:Nge3} using a computer assisted proof whose details are given in Appendix \ref{appendix}. This excludes that $N\ge 3$. The case $N=2$ is finally studied in Proposition \ref{prop:Neq2} where we prove that $\Bl=1/4$ and that the mass splits in half. \\

The variational problem \eqref{minmainintro} may be seen as a two dimensional (one for time and one for space) analog of the three dimensional (one for time and two for space) problem derived in \cite{CoGoOtSe}
as a reduced model for the description of branching in type-I superconductors in the regime of very small applied external field. We refer the reader to \cite{CoGoOtSe} for more precise physical motivations and references. In this regime, the natural Dirichlet conditions appearing are $\mu_{\pm}=dx\restr[-1/2,1/2]$. Let us point out that in
the three dimensional model, the term $\sharp\{\phi_i\neq 0\}$ is replaced by $\sum_i \phi_i^{1/2}$. This is in line with the interpretation of the first term in \eqref{functional} as an interfacial
term penalizing the creation of many flux tubes. That is, if we are in $(1+d)-$dimensions, it is proportional to the perimeter of a union of $d-$dimensional balls of volume $\phi_i$ (which is $2\sqrt{\pi}\sum_i \phi_i^{1/2}$ if $d=2$ and $2\sharp\{\phi_i\neq 0\}$ if $d=1$).
As already alluded to, the second term in \eqref{functional} may be interpreted as the Wasserstein transportation cost \cite{AGS, Sant,villani} of moving such balls.\\
In many models describing pattern formation in material sciences, branching patterns similar to the one observed here are expected. However, it is usually very hard to go beyond scaling laws \cite{KohnMuller94,Zwicknagl2014,ChoKoOtmicro,CoOtSer}. In some cases, reduced models have been derived \cite{ViehOtt,CoGoOtSe,CoDiZw} but so far the best results concerning the minimizers are local energy bounds leading
to the proof of asymptotic self-similarity \cite{Conti2006,Viehmanndiss}. Our result is thus the first complete characterization of a minimizer in this context. Of course, this was possible thanks to the simplicity of our model (one dimensional trees in a two dimensional ambient space). We should however point out
that our result is not fully satisfactory since we are essentially able to study only the situation of an isolated microstructure (due to the constraint $T\ge 1/4$) whereas one is typically interested in the
case $T\ll1$ where many microstructures are present and where the lateral boundary conditions have limited effect i.e. one tries to capture an extensive behavior of the system. As detailed in the final section \ref{sec:appli},
we  believe that even in the regime $T\ll 1$, every microstructure is of the type described in Corollary \ref{cormain}.\\

As pointed out in \cite{CoGoOtSe}, the functional \eqref{functional} bears many similarities with so-called branched transport (or irrigation) models \cite{Xia,MSM,BeCaMo} (see also \cite{BrBuS,BraSa} for a formulation reminiscent of our model). Also in this class of problems, there 
has been a strong interest for the possible fractal behavior of minimizers. Besides results on scaling laws \cite{BraWirth}  and fractal regularity \cite{BraSol}, to the best of our knowledge,
the only explicit minimizers exhibiting infinitely many branching points have been obtained in \cite{PaoSteTep} for a Dirac irrigating a Cantor set and in \cite{MaMa} in an infinite dimensional context. 
In particular, the optimal irrigation pattern from a Dirac mass to the Lebesgue measure is currently not known for the classical branched transportation model. One important difference between our model and branched transportation 
is that in our case, minimality does not imply triple junctions nor conditions on the angles between the branches.\\

The organization of the paper is the following. In Section \ref{secnot}, we recall the definition and the basic properties of the functional $\E$. Then, in Section \ref{irrig} we prove Theorem \ref{main}. In the final Section \ref{sec:appli}, we give 
an application of Theorem \ref{main} to the irrigation of the Lebesgue measure by itself and state an open problem. Appendix \ref{appendix} contains the computer assisted computation of $\alpha_N$.\\

\textbf{Notation}
In the paper we will use the following notation. The symbols $\simeq$, $\ges$, $\les$, $\ll$ indicate estimates that hold up to a global constant. For instance, $f\les g$ denotes the existence of a constant $C>0$ such that $f\le Cg$. 
 We denote by $\H^1$ the $1-$dimensional Hausdorff measure. For a Borel measure $\mu$, we will denote by $supp\, \mu$ its support.

\section*{Acknowledgment}
I warmly  thank F. Otto for many useful discussions and constant support as well as A. Lemenant for a careful reading of a preliminary version of the paper. The hospitality of the Max Planck Institute for Mathematics in the Sciences of Leipzig, where part of this
research was carried out is  gratefully acknowledged. Part of this research was funded by the program PGMO of the FMJH through the  project COCA.

 \section{The variational problem and main properties of the functional}\label{secnot}
 In this section we first give a rigorous definition of the energy $\E(\mu)$ and then prove that the boundary value  problem \eqref{minmainintro} has minimizers which are locally given by finite union of straight segments so that the representation \eqref{functional} makes sense.   
 \begin{definition}
For $a<b$ we denote by $\calA_{a,b}$ the set of  pairs of measures 
$\m\ge 0$, $m$ with $m\ll\mu$, satisfying the continuity equation
\begin{equation}\label{conteq0}
 \pa_t \m +\pa_x m=0 \qquad \textrm{in } \R\times(a,b)
\end{equation}
and such that $\m=\m_{t}\otimes d{t}$ where, for a.e. $t\in(a,b)$,
$\m_{t}=\sum_i \phi_i \delta_{X_i}$ for some $\phi_i\ge 0$ and $X_i\in \R$. 
We denote by $\calA_{a,b}^*:=\{\mu: \exists m, (\mu,m)\in\calA_{a,b}\}$ the
set of admissible $\mu$.

Further, we define  $\E:\calA_{a,b}\to[0,\infty]$ by
\begin{equation}\label{Imum}
 \E(\mu,m):=
    \int_{a}^b  \sharp\{supp \, \m_t\} dt + 
   \int_{\R\times(a,b)}
   \left(\frac{dm}{d\m} \right)^2 d\m 
\end{equation}
and  (with abuse of notation) $\E:\calA_{a,b}^*\to[0,\infty]$ by
\begin{equation}\label{Imu}
\E(\m):=\min \{ \E(\mu,m)\ : \ m\ll\m, \ \pa_t \m + \pa_x m=0\}.
\end{equation}
\end{definition}
Equation (\ref{conteq0}) is understood in the sense of distributions (testing with test functions in $C^\infty_c(\R\times(0,T))$). Contrary to \cite{CoGoOtSe}, we use free boundary conditions instead of periodic ones but this makes only minor differences.
In the sequel we will only deal with measures $\mu$ of bounded support. In this case, 
because of  \eqref{conteq0},  $\mu_{t}(\R)$ does not depend on $t$. Let us point out that for such measures, the minimum in (\ref{Imu}) is attained thanks to \cite[Th. 8.3.1]{AGS}.
Moreover, the minimizer is unique by strict convexity of $m\to\int_{\R\times(a,b)} \left(\frac{dm}{d\m} \right)^2 d\m$.  {}Let us also notice that by the Benamou-Brenier formula \cite{AGS}, we have
%\begin{lemma}\label{lemmameascont}
 for every measure $\mu$, and every $t, t'\in(a,b)$,
\begin{equation}\label{HolderW2}
  W_2^2(\mu_{t}, \m_{t'})\le \E(\mu) |t-t'|,
\end{equation}
where  
the $2$-Wasserstein distance between 
 two measures $\mu$ and $\nu$ of bounded second moment with $\mu(\R)=\nu(\R)$  is defined by
\[W_2^2(\mu,\nu):=\min\lt\{ \int_{\R\times \R} |x-y|^2 \, d\Pi(x,y)  \, : \, \Pi_1=\mu, \ \Pi_2=\nu\rt\},\] 
where the minimum is taken over  measures on  $\R\times \R$ and   $\Pi_1$  and $\Pi_2$ are respectively  the first and  second marginal of $\Pi$.
In particular for  every measure $\mu$ with $\E(\mu)<\infty$, the curve $t\mapsto \mu_{t}$ is H\"older continuous with exponent one half in the space of measures (endowed with the metric $W_2$) and the traces $\mu_{a}$ and $\mu_{b}$
 are well defined.
%\end{lemma}

Given two measures $ \m_{\pm}$ on $\R$ with $\ \mu_+(\R)= \mu_-(\R)$ and bounded support, we are interested in the  variational problem
\begin{equation}\label{limitProb}
 \inf\lt\{ \E(\mu) \ : \ \mu_{a}=  \m_{-}, \ \m_b=\mu_+ \rt\}.
\end{equation}
Let us first notice that if $L>0$ is such that $supp \, \mu_-\cup supp \,  \mu_+\subset [-L/2,L/2]$, then we may restrict the infimum in \eqref{limitProb} to measures satisfying $supp \,  \mu_t\subset [-L/2,L/2]$ for a.e. $t\in (a,b)$.
Indeed, if $\mu$ is admissible with $\mu_t=\sum_i \phi_i \delta_{X_i}$ then  letting $\widetilde{X}_i:=  \min(L/2, |X_i|) sign X_i$ and then $\tilde{\mu}_t:=\sum_i \phi_i \delta_{\widetilde{X}_i}$, we get that $\tilde{\mu}$ is admissible and  has lower energy than $\mu$ (i.e. the energy decreases by projection on $[-L/2,L/2]$).  From now on we will only consider such measures.\\

As in \cite[Prop 5.2]{CoGoOtSe} (to which we refer for the proof), a simple branching construction shows that   any pair of  measures with equal flux may be connected with finite cost.

\begin{proposition}\label{branchingmu}
 For every pair of  measures $ \m_{\pm}$ with $supp \, \mu_{\pm}\subset[-L/2,L/2]$ and $\mu_+(\R)=\mu_-(\R)=\phi$, 
 there is  $\mu\in \calA^*_{a,b}$ such that letting $b-a=2T$,
 $\mu_{a}=\mu_-$, $\mu_b=\mu_+$ and
 \begin{equation*}
  \E(\mu)\les  T+\frac{\phi L^2}{T}. 
 \end{equation*}
 If $\mu_+=\mu_-$, then there is a construction with 
 \begin{equation*}
  \E(\mu)\les   T+T^{1/3} \phi^{1/3} L^{2/3}. 
 \end{equation*}
 \end{proposition}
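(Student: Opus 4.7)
The plan is a dyadic binary-tree construction. For each $m\ge 0$, partition $[-L/2,L/2]$ into $2^m$ equal subintervals and let $\bar\mu_\pm^m$ be the atomic measure concentrating the $\mu_\pm$-mass of each subinterval at its centre; a direct transport estimate gives $W_2(\mu_\pm,\bar\mu_\pm^m)\le \sqrt{\phi}\,L\,2^{-m}$. For the first bound I construct $\mu$ on $[a,a+T]$ so as to collapse $\bar\mu_-^m$ down to a single Dirac of mass $\phi$ at the origin by $t=a+T$, and then mirror on $[a+T,b]$ with $\bar\mu_+^m$ replacing $\bar\mu_-^m$.

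Concretely, fix a decreasing sequence $s_0=a+T>s_1>s_2>\cdots\to a$, prescribe $\mu_{s_m}=\bar\mu_-^m$, and on each slab $[s_{m+1},s_m]$ pair the $2^{m+1}$ atoms of $\bar\mu_-^{m+1}$ by common scale-$m$ parent cell, moving each one at constant velocity toward the centre of that cell (where the combined mass then sits). Since every atom travels a distance $\les L\,2^{-m}$, writing $\tau_m:=s_m-s_{m+1}$ the slab contributes $\les 2^{m}\tau_m$ to the interfacial term and $\les \phi L^2 2^{-2m}/\tau_m$ to the kinetic term. The choice $\tau_m\simeq \sqrt{\phi}\,L\,2^{-3m/2}$ balances the two into a per-level cost $\simeq \sqrt{\phi}\,L\,2^{-m/2}$; both the total collapse cost and the total collapse time are thus bounded, via a geometric series, by $C\sqrt{\phi}\,L$. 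If $T\ges \sqrt{\phi}\,L$, the remaining time is spent holding the single Dirac at cost $\le T$; if instead $T\les \sqrt{\phi}\,L$, I rescale all $\tau_m$ by a common factor $\lambda=T/(C\sqrt{\phi}\,L)<1$, which multiplies the interfacial terms by $\lambda$ and the kinetic ones by $\lambda^{-1}$, giving a total cost $\les \sqrt{\phi}\,L\,(\lambda+\lambda^{-1})\les \phi L^2/T$. Mirroring on $[a+T,b]$ finally yields $\E(\mu)\les T+\phi L^2/T$.

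For the symmetric case $\mu_+=\mu_-$, I truncate the collapse at a free level $m_0$: the refinement is stopped once scale $m_0$ is reached, $\mu_t$ is kept equal to $\bar\mu_-^{m_0}$ throughout the central phase, and the construction is reversed on the right half. The tail collapse (scales $m>m_0$) still costs $\les \sqrt{\phi}\,L\,2^{-m_0/2}$ and takes time $\les \sqrt{\phi}\,L\,2^{-3m_0/2}$, while the central phase contributes $\les 2^{m_0}T$ to the interfacial term and nothing to the kinetic one. Minimising $\sqrt{\phi}\,L\,2^{-m_0/2}+2^{m_0}T$ in $m_0$ yields $2^{m_0}\simeq (\sqrt{\phi}\,L/T)^{2/3}$ and a balanced excess cost $\les T^{1/3}\phi^{1/3}L^{2/3}$, on top of the baseline $\les T$ paid for having at least one atom at all times. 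When $T\ges \sqrt{\phi}\,L$, the optimiser falls back on $m_0=0$ and the first-bound analysis already gives the estimate.

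The main obstacle is not the computation but the trace conditions at $t=a,b$. Since $\mu_{s_m}=\bar\mu_\pm^m$ with $s_m\to a$ (respectively $b$) and $W_2(\bar\mu_\pm^m,\mu_\pm)\les \sqrt{\phi}\,L\,2^{-m}\to 0$, combining with the H\"older estimate \eqref{HolderW2} inside each slab $[s_{m+1},s_m]$ (whose length tends to $0$) yields $W_2(\mu_t,\mu_\pm)\to 0$ as $t$ approaches the endpoint, and in particular $\mu_t\weaklim\mu_\pm$. That the piecewise-affine atomic trajectories define a pair $(\mu,m)\in\calA_{a,b}$ solving \eqref{conteq0} is automatic, each atom being a constant mass moving along a straight segment.
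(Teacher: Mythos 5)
Your construction is correct and is precisely the dyadic binary-tree branching construction that the paper defers to \cite[Prop.~5.2]{CoGoOtSe}; it mirrors the self-similar tree $\mu^*$ of Theorem~\ref{main} and the test configuration $\tilde E(T)$ used in the proof of Proposition~\ref{estimTstar}, with the same geometric slab thicknesses $\tau_m\simeq\sqrt{\phi}\,L\,2^{-3m/2}$, the same rescaling trick when $T\les\sqrt{\phi}\,L$, and the same truncation at a finite level $m_0$ for the symmetric bound. The verification of the trace conditions via \eqref{HolderW2} together with $W_2(\bar\mu_\pm^m,\mu_\pm)\to 0$ is the right way to make the endpoint behaviour rigorous.
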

From this, arguing as in \cite[Prop. 5.5]{CoGoOtSe}, we obtain that

\begin{proposition}\label{existmu}
 For every pair of measures $ \m_{\pm }$ with bounded support and  $ \m_+(\R)= \m_-(\R)$, the infimum in \eqref{limitProb} is finite and attained.
\end{proposition}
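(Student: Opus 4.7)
The plan is to apply the direct method of the calculus of variations. Finiteness of the infimum in \eqref{limitProb} is immediate from Proposition \ref{branchingmu}, which produces an admissible competitor of energy $\lesssim T + \phi L^2/T$. I then take a minimizing sequence $(\mu^n, m^n) \in \calA_{a,b}$. The projection argument recalled just before Proposition \ref{branchingmu} lets me assume $\mathrm{supp}\,\mu^n_t \subset [-L/2,L/2]$ for a.e.\ $t$. Since the continuity equation preserves the total mass $\phi := \mu_-(\R)$, the $\mu^n$ are supported in a fixed compact cylinder with uniformly bounded mass, so up to subsequence $\mu^n \weaklim^* \mu$ weakly-$*$ as Radon measures. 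The Cauchy--Schwarz inequality
\[
\int \Bigl|\tfrac{dm^n}{d\mu^n}\Bigr|\,d\mu^n \le \bigl(\E(\mu^n,m^n)\bigr)^{1/2}\bigl(\phi(b-a)\bigr)^{1/2}
\]
gives uniform control on the total variation of $m^n$, so along a further subsequence $m^n \weaklim^* m$, and passing to the limit in the linear equation \eqref{conteq0} yields $\partial_t\mu + \partial_x m = 0$.

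To identify the boundary values, I combine the H\"older bound \eqref{HolderW2} with the uniform energy control to get that $t \mapsto \mu^n_t$ is equicontinuous in $W_2$; an Ascoli-type argument together with the weak-$*$ convergence then gives $\mu_a = \mu_-$ and $\mu_b = \mu_+$.

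The remaining and main task is to prove $\mu \in \calA_{a,b}^*$ together with the lower semicontinuity $\E(\mu,m) \le \liminf_n \E(\mu^n,m^n)$. The Benamou--Brenier kinetic term is jointly convex and weak-$*$ lower semicontinuous in $(\mu,m)$, a standard duality argument, cf.\ \cite[Sec.\ 5]{AGS}. The interfacial term $\int_a^b \sharp\{\mathrm{supp}\,\mu_t\}\,dt$ is the main obstacle: I must show both that $\mu_t$ remains purely atomic for a.e.\ $t$ (so that the counting functional makes sense on the limit) and the lower semicontinuity inequality itself. My approach is to interpret this quantity as the $\mathcal{H}^1$-measure, counted with multiplicity, of the horizontal projection onto the $t$-axis of the $1$-rectifiable support of $\mu^n$ in the $(t,x)$-plane, and then invoke a Golab-type compactness and lower semicontinuity result for rectifiable sets. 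This simultaneously recovers the decomposition $\mu_t = \sum_i \phi_i\,\delta_{X_i(t)}$ for a.e.\ $t$ and the desired inequality. This is exactly the argument of \cite[Prop.\ 5.5]{CoGoOtSe} to which the paper defers.
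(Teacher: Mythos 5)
Your overall scaffolding is right: finiteness from Proposition \ref{branchingmu}, the projection to $[-L/2,L/2]$, the Cauchy--Schwarz bound on $|m^n|$, passing to the limit in the (linear) continuity equation, and using \eqref{HolderW2} plus Ascoli--Arzel\`a to get $\mu^n_t\to\mu_t$ in $W_2$ for \emph{every} $t$ and hence to recover the endpoint conditions. The lower semicontinuity of the kinetic term via joint convexity is also standard.

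The step you flag as the main task is, however, not handled by the route you propose, and the ``Golab-type'' argument does not fit. First, $\int_a^b \sharp\{\mathrm{supp}\,\mu^n_t\}\,dt$ is $\sum_i(b_i^n-a_i^n)$, not the $\mathcal H^1$-measure of the support of $\mu^n$ in the $(x,t)$-plane (that would be $\sum_i\int\sqrt{1+|\dot X_i^n|^2}$); these do not coincide, and no version of Golab's theorem applies directly to the projected quantity. Second, Golab-type semicontinuity requires Hausdorff convergence of the sets and control on the number of connected components; neither is available here, since branches of vanishing mass can persist with large $\mathcal H^1$ length or wander without contributing to the weak-$*$ limit. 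Third, even granting a Golab-style statement, it would not by itself yield the needed structural fact that the limit $\mu_t$ is purely atomic, which you must prove to conclude $\mu\in\calA^*_{a,b}$.

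The correct and much simpler route uses exactly the Ascoli step you already have. Since $\mu^n_t\to\mu_t$ weak-$*$ for each fixed $t$, and weak-$*$ limits of sums of at most $K$ Dirac masses (with uniformly bounded total mass and compact support) are again sums of at most $K$ Dirac masses, the map $\nu\mapsto\sharp\{\mathrm{supp}\,\nu\}\in\N\cup\{\infty\}$ is lower semicontinuous along this convergence. Hence for a.e.\ $t$, $\sharp\{\mathrm{supp}\,\mu_t\}\le\liminf_n\sharp\{\mathrm{supp}\,\mu^n_t\}$; in particular the right-hand side is finite a.e.\ by Fatou and the energy bound, so $\mu_t$ is atomic a.e.\ and $\mu\in\calA^*_{a,b}$. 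Applying Fatou once more gives $\int_a^b\sharp\{\mathrm{supp}\,\mu_t\}\,dt\le\liminf_n\int_a^b\sharp\{\mathrm{supp}\,\mu^n_t\}\,dt$. This pointwise-in-$t$ argument is the content of the reference the paper defers to; it requires no rectifiability or Golab machinery and is the argument you should substitute for the last paragraph of your proposal.
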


We now give some regularity results for minimizers of \eqref{limitProb}. These can be mostly proven as in \cite{CoGoOtSe} so we state them without proof. Let us first recall the notion of subsystem.
\begin{proposition} [Definition of a subsystem]\label{def:subsystem}
Given a point $(X, t) \in [-L/2,L/2]\times(a,b) $ and $\mu\in\calA^*_{a,b}$ with $\E(\mu)<\infty$, there exists a subsystem $\m'$ of $\m $ emanating from $(X,t)$. By this we mean that there exists $\m'$ such that
\begin{itemize} 
\item[(i)] $ \m'\le \m$  i.e. $\m-\m'$ is a positive measure,
\item[(ii)] $\m'_{t}=a\delta_{X}$, where $a=\m_{t}(X)$,
\item[(iii)] if $m$ is such that $\E(\mu)=\E(\mu,m)$, then  \begin{equation*}%\label{continuitysubsystem}
\pa_t \m'+ \pa_x \lt(\frac{dm}{d\m} \m'\rt)=0.\end{equation*}
\end{itemize}
In particular, (ii) implies that    $(\m_{t} - \m'_{t}) \perp  \delta_{X}$ in the sense of  the Radon-Nikodym decomposition. We call $\mu^+:=\mu'\restr\R\times (t,b)$ the forward subsystem emanating from $(X,t)$ and $\mu^-:=\mu'\restr\R\times (a,t)$ the backward subsystem emanating from $X$. 
\end{proposition}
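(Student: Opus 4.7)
The plan is to construct the subsystem by the superposition principle for solutions of the continuity equation: represent $\mu$ as a measure on characteristic curves, then select precisely the curves that pass through $X$ at time $t$.

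First, I would invoke the Ambrosio--Gigli--Savar\'e representation (Theorem 8.2.1 of \cite{AGS}). Since $\E(\mu)<\infty$, the velocity field $v:=dm/d\mu$ satisfies $\int |v|^2 \, d\mu < \infty$, and the continuity equation $\partial_t \mu + \partial_x(v\mu)=0$ holds. The representation theorem then yields a finite positive measure $\eta$ on $C([a,b];\mr)$ concentrated on absolutely continuous curves $\gamma$ satisfying $\dot\gamma(s)=v(\gamma(s),s)$ for $\eta$-a.e. $\gamma$, and such that $\mu_s=(e_s)_\#\eta$ for every $s\in[a,b]$, where $e_s(\gamma):=\gamma(s)$.

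Second, I would disintegrate at time $t$. Set
\[
\Gamma_{X,t}:=\{\gamma\in C([a,b];\mr)\,:\,\gamma(t)=X\},\qquad \eta':=\eta\restr \Gamma_{X,t},\qquad \mu'_s:=(e_s)_\#\eta'.
\]
Since $\mu_t=(e_t)_\#\eta$, the total mass of $\eta'$ equals $\eta(e_t^{-1}(\{X\}))=\mu_t(\{X\})=a$. Hence $\mu'_t=a\delta_X$, which gives (ii). The inequality $\eta'\le \eta$ immediately yields $\mu'_s\le \mu_s$ for all $s$, hence $\mu'\le \mu$, which is (i). Because $\mu-\mu'$ corresponds to $\eta-\eta'$, which is supported on curves with $\gamma(t)\neq X$, the restriction $(\mu_t-\mu'_t)$ has no atom at $X$, giving the Radon--Nikodym perpendicularity. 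Moreover $\mu'$ inherits the atomic structure $\mu'_s=\sum_i \phi'_i\delta_{X_i}$ from $\mu'\le \mu$, so $\mu'\in\calA^*_{a,b}$.

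Third, I would verify (iii). The curves in the support of $\eta'$ still satisfy $\dot\gamma=v(\gamma,\cdot)$ (this is a property of $\eta$-a.e. curve, unchanged by restriction). Applying the converse direction of the superposition principle to $\eta'$ yields
\[
\partial_t\mu'+\partial_x(v\mu')=0.
\]
Writing $v=dm/d\mu$ and observing that $v\mu'$ is absolutely continuous with respect to $\mu'$ with density $v$ (which coincides $\mu'$-a.e. with $dm/d\mu$ since $\mu'\le \mu$), this is exactly the equation in (iii).

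The main step requiring care is the application of the superposition principle on the whole time interval $[a,b]$ without assuming any a priori regularity of $v$ beyond $L^2(d\mu)$; the key point is that $\mu$ has atomic fibres, so the characteristics are (piecewise) straight segments and the usual non-uniqueness issues for the ODE $\dot\gamma=v(\gamma,\cdot)$ only appear at branching times, which form a negligible set. The forward and backward subsystems $\mu^+=\mu'\restr \mr\times(t,b)$ and $\mu^-=\mu'\restr\mr\times(a,t)$ are then obtained by restricting $\eta'$ to the corresponding time intervals, and this construction is precisely the one carried out in \cite{CoGoOtSe} in the three-dimensional setting, so only minor adaptations are needed here.
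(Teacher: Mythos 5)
Your proposal is correct and, as you yourself note, follows the same route that the paper defers to in \cite{CoGoOtSe}: invoking the superposition principle (Theorem 8.2.1 of \cite{AGS}, applicable since $\E(\mu)<\infty$ gives $v=dm/d\mu\in L^2(d\mu)$), restricting the lifting measure $\eta$ to the curves through $(X,t)$, and pushing forward to recover (i)--(iii). The paper itself states the proposition without proof, so there is no internal argument to compare against; your reconstruction is the expected one, and the point you flag -- that the restricted measure $\eta'$ remains concentrated on integral curves of $v$, so the converse direction of the superposition principle yields $\partial_t\mu'+\partial_x(v\mu')=0$, with $v=dm/d\mu$ agreeing $\mu'$-a.e.\ because $\mu'\le\mu$ -- is exactly what makes (iii) work.
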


\begin{lemma}[No loops] \label{noloop} Let $\m$ be a minimizer for the Dirichlet problem (\ref{limitProb}), $\bar t\in (a,b)$.
Let $X_1$, $X_2$ be two points in the line $\{(x,t):t=\bar{t}\}$.  Let $\m_1$ and $\m_2$ be subsystems  of $\m$ emanating from $(X_1,\bar t)$, resp. $(X_2,\bar t)$. Let $(X_+,t_+)$ 
be a point with $ t_+  > \bar{t}$ and $(X_-,t_-)$ a point with $ t_-<\bar{t}$, and such that $\m_1$ and $\m_2$ both have Diracs at both $X_+$ and $X_-$ with nonzero mass. Then $X_1=X_2$.
\end{lemma}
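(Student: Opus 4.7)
The plan is proof by contradiction: assume $X_1 \ne X_2$ and construct an admissible competitor $\tilde\mu$ with $\E(\tilde \mu) < \E(\mu)$. I would first extract from the subsystems a pair of single-strand channels by iterating the construction of Proposition \ref{def:subsystem} -- take the backward subsystem of $\mu_i$ emanating from $(X_i,\bar t)$, then the forward sub-subsystem emanating from $(X_-, t_-)$, and symmetrically do the forward extraction. This yields sub-subsystems $\sigma_i \le \mu_i$ of common mass $m>0$ supported on continuous polygonal trajectories $Y_i:[t_-,t_+]\to\R$ with $Y_i(t_-)=X_-$, $Y_i(\bar t)=X_i$, $Y_i(t_+)=X_+$. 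Positivity of $m$ follows from the hypothesis that both $\mu_1, \mu_2$ have Diracs of positive mass at $(X_\pm, t_\pm)$.

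I would then consider the swap competitor $\tilde\mu := \mu - \sigma_1 + \sigma_2'$, where $\sigma_2'$ is an additional copy of $\sigma_2$, effectively rerouting the $X_1$-channel along $Y_2$. Since $Y_2$ already carries mass $\ge m$ in $\mu$, no new Diracs appear and the counting term is non-increasing; the kinetic change is $m\int_{t_-}^{t_+}(|\dot Y_2|^2 - |\dot Y_1|^2)\,dt$. The symmetric swap yields the opposite sign, so at least one of the two options gives $\Delta\E \le 0$. Combined with the Cauchy-Schwarz lower bound $\int|\dot Y_i|^2\,dt \ge L(X_i) := (X_i - X_-)^2/T_1 + (X_+-X_i)^2/T_2$ (with $T_1 := \bar t - t_-$, $T_2 := t_+ - \bar t$), this yields strict kinetic decrease whenever $L(X_1) \ne L(X_2)$.

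In the residual case $L(X_1) = L(X_2)$ with $X_1 \ne X_2$, strict convexity of $L$ places $X_1, X_2$ symmetrically about the midpoint $\bar X := (X_1+X_2)/2$, which coincides with the global minimum of $L$. I would then replace both $\sigma_1, \sigma_2$ by a single channel of mass $2m$ along the straight broken trajectory through $\bar X$. The identity
\[
m L(X_1) + m L(X_2) - 2m L(\bar X) = \frac{m(X_1-X_2)^2}{2}\left(\frac{1}{T_1} + \frac{1}{T_2}\right) > 0
\]
gives a strict positive kinetic gain.

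The principal obstacle is the counting-term bookkeeping in this merger: the new mass-$2m$ trajectory may traverse positions at which $\mu$ has no Dirac, creating new branches over the whole time interval and potentially swamping the kinetic gain. I would address this by (i) invoking the piecewise-affine tree regularity of minimizers (Proposition \ref{reg}) to restrict, possibly after shrinking $[t_-,t_+]$ to a subinterval around $\bar t$, to the case where $Y_1, Y_2$ are straight segments free of interior Diracs, so the merged trajectory encounters no extra Diracs; and (ii) saturating the extraction so that $m$ equals the full mass of $\mu_{\bar t}$ at $X_1$ and $X_2$, which forces both intermediate Diracs to disappear under the swap and guarantees a net Dirac-count drop on an interval of positive length around $\bar t$.
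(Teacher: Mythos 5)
The paper does not actually prove this lemma; it defers to \cite{CoGoOtSe}. Your plan (swap the two channels, then merge in the degenerate case) is a sensible attack, and you correctly single out the counting-term bookkeeping as the crux. But both of your proposed repairs to that step fail, and there is also a hidden issue in the extraction step.

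First, the extraction in Step~1 does not in general yield sub-subsystems supported on single trajectories $Y_i$. A sub-subsystem $\sigma_i$ pinned at $(X_-,t_-)$, $(X_i,\bar t)$, $(X_+,t_+)$ may itself branch and re-merge in between; ruling out such internal loops is precisely what the lemma is for, so one cannot assume a single curve here. The Jensen-type lower bound $\int |\dot Y_i|^2 \ge L(X_i)$ is still salvageable by applying Jensen to the barycenter path $\bar Y_i(t) := m^{-1}\int x\, d(\sigma_i)_t$, but then the barycenter is not in general a Dirac position of $\mu$, which undermines the ``no new Diracs'' statement in the swap and merge steps as you phrased them.

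Second, fix (i) is circular. The piecewise-affine/tree structure of Proposition~\ref{reg} rests on the representation \eqref{representationmu}, which the paper explicitly derives \emph{from} Lemma~\ref{noloop}. And even granting that structure locally, shrinking to a subinterval around $\bar t$ does not help: the merged channel must agree with $\sigma_1+\sigma_2$ at the endpoints of whatever interval you merge on, so you are forced to merge on all of $[t_-,t_+]$, and on that interval the merged trajectory passes through $\bar X$ and its neighborhood, which is generically a new position of $\mu-\sigma_1-\sigma_2$, costing up to $t_+-t_-$ in the counting term. The kinetic gain $\frac{m(X_1-X_2)^2}{2}(T_1^{-1}+T_2^{-1})$ need not dominate $T_1+T_2$, so no contradiction follows.

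Third, fix (ii) is simply not available. The channel mass $m$ is bounded by the masses of $\mu_1,\mu_2$ at the loop endpoints $(X_\pm,t_\pm)$, i.e.\ by $\min(m_1^-,m_1^+,m_2^-,m_2^+)$, and there is no reason for this to equal the full masses $a_1=\mu_{\bar t}(\{X_1\})$, $a_2=\mu_{\bar t}(\{X_2\})$. So removing $\sigma_1,\sigma_2$ does not remove the Diracs at $X_1,X_2$, and the claimed ``net Dirac-count drop on an interval of positive length around $\bar t$'' does not materialize. In short, the swap step is fine and by minimality forces the kinetic energies of the two channels to coincide, but the concluding merge step as written does not close the argument: you need a construction that strictly lowers energy while provably not increasing the Dirac count, and none of the proposed variants achieves both.
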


As in \cite{CoGoOtSe}, a consequence of this lemma is that we  have a representation of the form
\begin{equation}\label{representationmu}
\m= \sum_i \frac{\varphi_i}{\sqrt{1+|\dotX_i|^2}} \, \mathcal{H}^1\restr\Gamma_i   \end{equation}
 where the sum is countable and 
 $\Gamma_i=\{ (X_i(t),t) : t\in [a_i,b_i]\} $ with $X_i$ absolutely continuous and almost everywhere 
 non overlapping.\\

Another consequence is that if there are two levels at which $\m$ is a finite sum
of Diracs, then it is the case for all the levels in between. Since $\E(\mu)<\infty$ implies in particular that $\sharp\{\phi_i\neq0\}<\infty$ for a.e. $t\in (a,b)$, this means that $\mu_t$
is in fact a locally  finite  (in time) sum of Dirac masses and thus, the sum in \eqref{representationmu} is  finite away from the initial and finite time.
\\

For measures which are concentrated on finitely many curves, we have as in \cite[Lem. 5.9]{CoGoOtSe}, a representation formula for $\E(\mu)$.

\begin{lemma}\label{lemmacurves}
 Let $\mu=\sum_{i=1}^N \frac{\varphi_i}{\sqrt{1+|\dotX_i|^2}} \, \mathcal{H}^1\restr \Gamma_i \in\calA^*_{a,b}$ with $\Gamma_i=\{ (X_i(t),t) : t\in [a_i,b_i]\}$ for some  absolutely continuous  curves $X_i$,  disjoint up to the endpoints. 
 Every $\phi_i$ is then constant on $[a_i,b_i]$ and we have conservation of mass. That is, 
 for $z:=(x,t)$, letting 
\begin{align*}\calI^-(z)&:=\{ i\in[1,N] \ : \ t=b_i, \ X_i(b_i)=x\} \\ \calI^+(z)&:=\{ i\in[1,N] \ : \ t=a_i, \ X_i(a_i)=x\},  \end{align*} 
there holds
\begin{equation*}%\label{consermass}
\sum_{i\in\calI^-(z)} \phi_i=\sum_{i\in\calI^+(z)} \phi_i.
\end{equation*}
  Moreover, $m=\sum_i \frac{\varphi_i}{\sqrt{1+|\dotX_i|^2}}
\dotX_i \, \mathcal{H}^1\restr\Gamma_i$ and 
\begin{equation}\label{Iparticul}
 \E(\mu)=\sum_i \int_{a_i}^{b_i}  1+ \phi_i |\dotX_i|^2 dt.
\end{equation}
  
\end{lemma}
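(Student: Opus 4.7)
The plan is to combine a time-disintegration of $\mu$ with distributional tests of the continuity equation to read off each of the four assertions.

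First I disintegrate $\mu$ in time. Parametrizing each $\Gamma_i$ by $t$ and using $d\H^1\restr\Gamma_i = \sqrt{1+|\dotX_i|^2}\,dt$, for $\psi\in C_c(\R\times(a,b))$ one has $\int\psi\,d\mu = \sum_i \int_{a_i}^{b_i} \psi(X_i(t),t)\phi_i(t)\,dt$, which identifies $\mu_t = \sum_{i:\,t\in[a_i,b_i]}\phi_i(t)\delta_{X_i(t)}$ (at this stage I temporarily allow $\phi_i$ to depend on $t$). Since $m\ll\mu$ and $\mu$ is concentrated on $\bigcup\Gamma_i$, set $v_i := (dm/d\mu)|_{\Gamma_i}$, so that $m|_{\Gamma_i} = v_i\,\mu|_{\Gamma_i}$.

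Next I exploit the continuity equation, which amounts to saying that the spacetime vector field $(m,\mu)$ is divergence-free. Pick $\psi\in C^\infty_c$ supported in a tube that meets only the interior of a single $\Gamma_i$; testing $\partial_t\mu + \partial_x m = 0$ against $\psi$ gives
\[
\int_{a_i}^{b_i}\bigl[\partial_t\psi + v_i\,\partial_x\psi\bigr](X_i(t),t)\,\phi_i(t)\,dt = 0.
\]
Taking the special family $\psi(x,t) = \eta(x-X_i(t))\chi(t)$ with $\eta$ a bump at the origin and $\chi\in C^\infty_c$ of the interior of $[a_i,b_i]$, and separating the independent contributions proportional to $\eta(0)$ and to $\eta'(0)$, I obtain simultaneously $\dot\phi_i\equiv 0$ (the constancy assertion) and $v_i \equiv \dotX_i$ on $\{\phi_i > 0\}$ (the tangency of the flux). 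Once $v_i = \dotX_i$, a general $\psi\in C^\infty_c(\R\times(a,b))$ integrates on each curve to $\int_{a_i}^{b_i}\tfrac{d}{dt}[\psi(X_i(t),t)]\phi_i\,dt$, yielding
\[
\sum_i \phi_i\bigl[\psi(X_i(b_i),b_i) - \psi(X_i(a_i),a_i)\bigr] = 0,
\]
and grouping terms by the common spacetime point $z$ at which several curves end or start produces the conservation identity $\sum_{i\in\calI^-(z)}\phi_i = \sum_{i\in\calI^+(z)}\phi_i$ at each interior junction $z$.

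Finally, the expression $m = \sum_i \phi_i\dotX_i/\sqrt{1+|\dotX_i|^2}\,\H^1\restr\Gamma_i$ just obtained is the unique minimizer in the definition \eqref{Imu} by strict convexity, so $\int_{\R\times(a,b)}(dm/d\mu)^2 d\mu = \sum_i \int_{a_i}^{b_i}\phi_i|\dotX_i|^2\,dt$. Disjointness of the $\Gamma_i$ up to endpoints gives $\sharp\{\operatorname{supp}\mu_t\} = \#\{i:t\in[a_i,b_i]\}$ for a.e.\ $t$, so $\int_a^b \sharp\{\operatorname{supp}\mu_t\}\,dt = \sum_i(b_i - a_i)$; adding the two contributions proves \eqref{Iparticul}. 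The point I expect to be genuinely delicate is the tangency argument in the second step: since $X_i$ is merely absolutely continuous, the moving test function $\eta(x-X_i(t))\chi(t)$ is only $W^{1,1}$ in time, so rigorously separating the $\eta(0)$ and $\eta'(0)$ terms requires a small regularization exploiting the $L^2$-integrability of $\dotX_i$ that follows from $\E(\mu)<\infty$.
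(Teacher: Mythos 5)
The paper does not prove this lemma; it is quoted from \cite[Lem.~5.9]{CoGoOtSe}, so there is no in-paper argument to compare against. Taking your proof on its own merits: the overall plan (time-disintegration, read off constancy of $\phi_i$ and the tangency $\tfrac{dm}{d\mu}=\dot X_i$ from the continuity equation, then the junction balance and the energy splitting) is the right one, and the concluding computations are correct.

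The one real issue is exactly the one you flag: the moving test function $\psi(x,t)=\eta(x-X_i(t))\chi(t)$ is only $W^{1,1}$ in time when $X_i$ is merely absolutely continuous, whereas the continuity equation is posed against $C^\infty_c$ test functions, and you do not actually carry out the regularization you invoke; since this step is the crux of the lemma, the proof as written is incomplete. The obstruction is, however, easily removed. With finitely many curves, disjoint on their interiors, and $X_i$ uniformly continuous, one can take genuinely smooth products $\psi(x,t)=\theta(x)g(t)$, with $g\in C^\infty_c$ supported in a short interval inside $(a_i,b_i)$ and $\theta\in C^\infty_c(\R)$ equal to a constant (respectively affine, e.g.\ $\theta(x)=x$) on a neighbourhood of $X_i(\mathrm{supp}\,g)$ and vanishing near the other curves. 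The constant cutoff gives $\int g'\phi_i\,dt=0$, hence $\phi_i$ is constant; the affine one then gives $\int(X_i g'+v_i g)\,dt=0$, so $v_i$ is the distributional time-derivative of the absolutely continuous $X_i$, i.e.\ $v_i=\dot X_i$ a.e. Two minor points: strict convexity is superfluous, since $m\ll\mu$ together with the continuity equation already determines $m$ uniquely, hence it is automatically the minimizer in \eqref{Imu}; and when writing $\sharp\{supp\,\mu_t\}=\#\{i:t\in(a_i,b_i)\}$ you should first discard the (constant-flux) curves with $\phi_i=0$.
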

In particular, this proves that for minimizers, formula \eqref{Iparticul} holds (where  the sum is at most countable). By a slight abuse of notation, for such measures we will denote
\[\E(\mu)=\int_{a}^b \sharp\{\phi_i\neq 0\}+ \sum_i \phi_i |\dotX_i|^2 dt.\]

% 
% \begin{remark}
%  Up to some technicalities, the proof of the above lemma should be extendable to measures $\mu$ concentrated on countably many curves $\Gamma_i$.
%  In particular, in light of Corollary~\ref{coroXia} (see below), representation \eqref{Iparticul}  holds for every measure $\mu$ with $I(\mu)<\infty$.
% \end{remark}

We  gather below some properties of the minimizers
\begin{proposition}\label{reg}
A minimizer of the Dirichlet problem (\ref{limitProb}) with boundary conditions
${\mu}_{\pm}$  satisfies
\begin{itemize}
\item[(i)] Each $X_i$ is affine.
\item[(ii)] There is  monotonicity of the traces in the sense that for every $t\in(a,b)$, if $\mu_t=\sum_i \phi_i \delta_{X_i}$ with $X_i$ ordered (i.e. $X_i\le X_{i+1}$) and if $\mu^{i,+}$ is the forward subsystem emanating from $X_i$, then  the traces $\mu_b^{i,+}$ satisfy
$supp \, \mu_b^{i,+}=[x_{i}^+,y_i^+]$ with $y_i^+\le x_{i+1}^+$. The analogous  statement  holds for the backward subsystems.  
\item[(iii)] If $\mu_-=\phi \delta_X$ then $\mu$ has  a tree structure.
\item[(iv)] If ${\mu}_{-}= {\mu}_{+} $, then letting $a=-T$ and $b=T$, there exists a minimizer which is symmetric with respect to the $t=0$ plane. For every such minimizer, the number of Dirac masses at time $t$ is minimal for  $t=0$.
\end{itemize}
    \end{proposition}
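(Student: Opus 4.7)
My plan is to take the four parts in order, relying at each step on Lemma \ref{noloop} and the explicit energy representation \eqref{Iparticul}.

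For (i), a local competitor suffices. Fix an index $i$ and replace the curve $X_i$ on $[a_i,b_i]$ by the affine interpolation of its endpoints, leaving every other curve and the branching topology unchanged. Admissibility and the lifetime term $\sum_j(b_j-a_j)$ are unaffected, while $\phi_i\int_{a_i}^{b_i}|\dot X_i|^2\,dt$ is strictly decreased by Jensen's inequality unless $X_i$ was already affine. Minimality thus forces each $X_i$ to be affine.

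For (ii), I would argue by a one-dimensional monotone rearrangement. Assume that $y_i^+>x_{i+1}^+$ for some $i$. Since $X_i<X_{i+1}$ at time $t$ while the forward traces overlap at time $b$, two branches of the representation \eqref{representationmu}, one in $\mu^{i,+}$ ending beyond $x_{i+1}^+$ and one in $\mu^{i+1,+}$ ending before $y_i^+$, must cross at some intermediate time $t^*\in(t,b)$. Swapping their tails past $t^*$ preserves admissibility and the trace at time $b$ while strictly reducing the kinetic cost, since in one dimension the monotone coupling is the unique minimizer of the quadratic Wasserstein cost. This contradicts minimality.

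For (iii), when $\mu_-=\phi\delta_X$ the backward subsystem emanating from any Dirac at time $\bar t$ concentrates its mass at the single source $(X,a)$. Hence any two forward subsystems emanating from distinct Diracs $X_1,X_2$ at time $\bar t$ automatically share a backward Dirac of positive mass at $(X,a)$; if they also shared a forward Dirac of positive mass, Lemma \ref{noloop} would force $X_1=X_2$. Thus forward subsystems from distinct Diracs never rejoin, which is exactly the tree structure.

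For (iv), a symmetric minimizer can be produced by a gluing argument: given any minimizer $\mu$, set $\nu:=\mu_0$ and let $\eta$ be a minimizer of the Dirichlet problem on $[-T,0]$ with boundary conditions $\mu_-$ and $\nu$. Both $\mu|_{[-T,0]}$ and the time-reversal of $\mu|_{[0,T]}$ are admissible competitors, so $\E(\eta)\le\tfrac12\E(\mu)$, and gluing $\eta$ to its time-reflection across $t=0$ produces a symmetric admissible measure $\hat\mu$ of energy $2\E(\eta)\le\E(\mu)$, hence a minimizer. I expect the last claim to be the main obstacle. Assuming by contradiction that some symmetric minimizer satisfies $\#\mu_{t_0}<\#\mu_0$ for some $t_0>0$, the strategy is to consider the earliest merge event $(X_+,s_+)$ in $(0,t_0]$ and its symmetric split at $(X_+,-s_+)$, and to extract two distinct Diracs $(X_A,\bar t),(X_B,\bar t)$ at some $\bar t\in(-s_+,0)$ whose forward subsystems both contain $(X_+,s_+)$ and whose backward subsystems both contain $(X_+,-s_+)$, each with positive mass. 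Lemma \ref{noloop} would then force $X_A=X_B$, yielding the desired contradiction.
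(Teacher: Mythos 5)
The paper's own proof is a one--liner (``Item (i) follows from fixing the branching points and minimizing in $X_i$. The other points are simple consequences of Lemma~\ref{noloop}''), so there is no canonical proof to compare against; your attempt is a reasonable fleshing-out, and items (i), (iii) and the existence part of (iv) are essentially correct and in the spirit of the paper. There are, however, two places where the argument as written does not quite close.

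For (ii), the ``swap the tails past $t^*$'' step does not do what you claim. At a crossing time $t^*$ the two curves occupy the \emph{same} point, so exchanging their tails after $t^*$ is a pure relabelling of the curve decomposition: the underlying measure $\mu$, and hence $\E(\mu)$, is unchanged, and no strict energy decrease is produced. To turn this into a genuine contradiction you would need an extra step -- for instance straightening the re-paired curves, which requires the masses on the two crossing branches to be equal (or a further subdivision of the heavier branch), and requires checking that the branching topology away from the crossing is not disturbed. Alternatively one should tie the argument back to Lemma~\ref{noloop} as the paper does, showing that a crossing of the forward subsystems from $X_i$ and $X_{i+1}$, together with the structure of the backward subsystems, creates a forbidden ``diamond''. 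As it stands the step ``strictly reducing the kinetic cost'' is unjustified.

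For the last claim of (iv), the strategy is right but the choice $\bar t\in(-s_+,0)$ is not the natural one and is harder to justify. The clean version takes $\bar t=0$: first trace the two branches merging at $(X_+,s_+)$ backwards to time $0$; by Lemma~\ref{noloop} (no diamonds on $[-T,s_+]$) they remain distinct there, giving two distinct Diracs $X_A,X_B$ at $t=0$. Then use the symmetry in a quantitative way: since $\mu_t=\mu_{-t}$ forces $m_t=-m_{-t}$, the velocity field $m/\mu$ is odd under time reflection, so the backward subsystem emanating from $(X_A,0)$ is exactly the time reflection of the forward one; in particular it has a Dirac at $(X_+,-s_+)$, and similarly for $X_B$. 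Now Lemma~\ref{noloop} applied at $\bar t=0$ gives $X_A=X_B$, the desired contradiction. Without this observation about the odd symmetry of $m$, the claim that both backward subsystems ``contain $(X_+,-s_+)$'' is left unsupported, since the split at $(X_+,-s_+)$ is a priori unrelated to the particular branches that merge at $(X_+,s_+)$.
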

    \begin{proof}
     Item (i) follows from fixing the branching points and minimizing in $X_i$. The other points are simple consequences of Lemma \ref{noloop}.
    \end{proof}

    The monotonicity property (ii), is analogous to the monotonicity of optimal transport maps in one space dimension \cite{villani}.

 \section{Irrigation of the Lebesgue measure by a Dirac mass}\label{irrig}
 In this section we consider  \eqref{minmainintro} with $a=0$ and $b=T$, $\mu_-=\phi \delta_X$ and $\mu_+=dx \restr [-\phi/2,\phi/2]$. We will denote
 \[E(T,\phi,X):=\min \{\E(\mu) \ : \ \mu_0=\phi \delta_X , \ \mu_T= dx \restr [-\phi/2,\phi/2]\}.\]
 For simplicity we let $E(T,\phi):=E(T,\phi,0)$ be the energy required to connect the Lebesgue measure to the centered Dirac mass and $E(T):=E(T,1)$. The following lemma shows that  understanding $E(T)$ is enough for understanding $E(T,\phi,X)$.
 \begin{lemma}\label{lemrescale}
  For every $T,\phi,X$, there holds,
  \begin{equation}\label{equationEX}
   E(T,\phi,X)=E(T,\phi)+\frac{1}{T} \phi |X|^2.
  \end{equation}
Moreover, if $\mu_t=\sum_i \phi_i X_i(t)$ is optimal for $E(T,\phi)$, then letting $\widehat{X}_i(t):= (1-\frac{t}{T})X+X_i(t)$, $\hat{\mu}_t=\sum_i \phi_i \widehat{X}_i(t)$ is optimal for $E(T,\phi,X)$.\\

 Furthermore, we have 
\begin{equation}\label{rescaleTphi}
 E(T,\phi)=\phi^{3/2}E(T\phi^{-3/2}).
\end{equation}
In addition, if $\mu_t=\sum_i \phi_i X_i(t)$ is optimal for $E(T\phi^{-3/2})$ then letting $\hat{t}:=\phi^{3/2}t$,  $\widehat{\phi}_i:= \phi \phi_i$,
and $\widehat{X}_i:= \phi X_i(\hat{t})$, then $\hat{\mu}_{\hat{t}}= \sum_i \widehat{\phi}_i \delta_{\widehat{X}_i}$ is optimal for $E(T,\phi)$.\\
\end{lemma}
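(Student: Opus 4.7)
The plan is to prove both identities by exhibiting explicit bijections on the admissible class that transform one Dirichlet problem into the other, and then computing the resulting change of energy directly from the representation \eqref{Iparticul} provided by Lemma \ref{lemmacurves}.

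For equation \eqref{equationEX}, given an admissible $\mu_t=\sum_i\phi_i\delta_{X_i(t)}$ with $\mu_0=\phi\delta_0$ and $\mu_T=dx\restr[-\phi/2,\phi/2]$, I introduce the sheared measure $\widehat{\mu}_t:=\sum_i\phi_i\delta_{\widehat{X}_i(t)}$ with $\widehat{X}_i(t):=(1-t/T)X+X_i(t)$. The shift is affine in $t$ and identical on every branch, so admissibility (i.e.\ the continuity equation) is preserved; moreover, the new boundary traces are exactly $\phi\delta_X$ and $dx\restr[-\phi/2,\phi/2]$. The counting term in \eqref{Iparticul} is manifestly unchanged, and expanding $|\dot{\widehat{X}}_i|^2=|\dot X_i-X/T|^2$ gives
\begin{equation*}
\E(\widehat{\mu})-\E(\mu)=-\frac{2X}{T}\int_0^T\sum_i\phi_i\dot X_i\,dt+\frac{|X|^2}{T^2}\int_0^T\sum_i\phi_i\,dt.
\end{equation*}
The second integral equals $T\phi$ by mass conservation. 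For the first, conservation of mass at each node (Lemma \ref{lemmacurves}) implies $\sum_i\phi_i\dot X_i=\frac{d}{dt}\int x\,d\mu_t$, so the time integral telescopes to $\int x\,d\mu_T-\int x\,d\mu_0$, which vanishes because both $\phi\delta_0$ and $dx\restr[-\phi/2,\phi/2]$ are centered. Hence $\E(\widehat{\mu})=\E(\mu)+\phi|X|^2/T$, and since the inverse shear has the same form, applying it to a minimizer of $E(T,\phi,X)$ yields the reverse inequality and simultaneously identifies optimizers.

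For the rescaling \eqref{rescaleTphi}, given $\mu$ admissible for $E(T\phi^{-3/2})$, the proposed $\widehat{\mu}$ satisfies $\widehat\mu_0=\phi\delta_0$ and $\widehat\mu_T=dx\restr[-\phi/2,\phi/2]$, since the dilation $x\mapsto\phi x$ pushes $dx\restr[-1/2,1/2]$ forward to $\phi^{-1}dx\restr[-\phi/2,\phi/2]$, and multiplying the total mass by $\phi$ cancels this factor. Differentiating $\widehat{X}_i(\hat t)=\phi X_i(\hat t/\phi^{3/2})$ yields $d\widehat{X}_i/d\hat t=\phi^{-1/2}\dot X_i$, so $\widehat\phi_i|d\widehat{X}_i/d\hat t|^2=\phi\phi_i\cdot\phi^{-1}|\dot X_i|^2=\phi_i|\dot X_i|^2$ and clearly $\sharp\{\widehat\phi_i\ne 0\}=\sharp\{\phi_i\ne 0\}$. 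Pulling the change-of-variables factor $d\hat t=\phi^{3/2}dt$ out of the time integral gives $\E(\widehat{\mu})=\phi^{3/2}\E(\mu)$, and since the map is bijective between the two admissible classes, \eqref{rescaleTphi} and the matching of minimizers follow. The only potentially delicate point is the vanishing of the cross term in the shear argument, which rests on the explicit centering of both boundary measures; everything else is a routine change of variables.
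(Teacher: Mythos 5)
Your proof follows essentially the same route as the paper: the shear $\widehat{X}_i(t)=(1-t/T)X+X_i(t)$ for \eqref{equationEX} and the parabolic rescaling $(\hat t,\hat\phi_i,\hat X_i)=(\phi^{3/2}t,\phi\phi_i,\phi X_i)$ for \eqref{rescaleTphi}, with the identification of minimizers coming from the bijectivity of these maps. The scaling computation is complete and correct.

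The one place you are slightly too quick is the ``telescoping'' of the cross term $\int_0^T\sum_i\phi_i\dot X_i\,dt$. Near $t=T$ the measure $\mu_t$ has infinitely many branches (it must refine to converge weakly to Lebesgue), so Lemma \ref{lemmacurves} does not directly apply on $[0,T]$ and you cannot literally invoke the fundamental theorem of calculus on the full interval. The paper handles this with an $\eps$-cutoff: on $[0,T-\eps]$ the sum is finite and Lemma \ref{lemmacurves} gives $\int_0^{T-\eps}\sum_i\phi_i\dot X_i\,dt=\sum_i\phi_i X_i(T-\eps)$; this right-hand side tends to $\int x\,d\mu_T=0$ by testing the weak convergence against the coordinate function $x$ (bounded on the uniformly bounded supports); and the tail is absorbed via Cauchy--Schwarz,
\begin{equation*}
\int_{T-\eps}^T\sum_i\phi_i|\dot X_i|\,dt\le\phi^{1/2}\eps^{1/2}\left(\int_{T-\eps}^T\sum_i\phi_i|\dot X_i|^2\,dt\right)^{1/2}\longrightarrow 0,
\end{equation*}
which also shows that the cross-term integral is absolutely convergent in the first place. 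You flag this as ``the only potentially delicate point,'' but attribute the delicacy to the centering of the boundary data rather than to the integrability/FTC issue at $t=T$; that is the step that actually needs the approximation argument. With this addition your proof matches the paper's.
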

\begin{proof}
  For $\mu_t=\sum_i \phi \delta_{X_i}$ admissible for $E(T,X)$, we define $\hat{\mu}_t:=\sum_i \phi_i \delta_{\hat{X}_i}$, where $\hat{X}_i(t):=  (1-\frac{t}{T})X+X_i(t)$.  Then, $\hat{\mu}_t$ is admissible for $E(T,\phi,X)$ and 
 \begin{align}\label{Erescale}\E(\hat{\mu})&=\int_0^T \sharp\{\phi_i\neq 0\} +\sum_i \phi_i |\dotX_i-\frac{1}{T}X|^2dt \\
  &=\int_0^T \sharp\{\phi_i\neq 0\} +\sum_i \phi_i |\dotX_i|^2 dt +\frac{|X|^2}{T^2}\int_0^T \sum_i \phi_i dt -2\frac{X}{T} \int_0^T \sum_i \phi_i \dotX_i dt \nonumber.
  \end{align}
  For  $\eps>0$, thanks to Lemma \ref{lemmacurves} and the fact that $X_i(0)=0$, we have 
  \[\int_0^{T-\eps} \sum_i \phi_i \dotX_i dt =\sum_i \phi_i X_i(T-\eps).\]
  Furthermore, testing the weak convergence of  $\mu_t$ to $dx$ as $t\to T$, with the function $x$, we get
  \[\lim_{\eps\to 0} \sum_i \phi_i X_i(T-\eps)=\int_{-\phi/2}^{\phi/2} xdx =0.\]
  Finally, since by H\"older's inequality applied twice and $\sum_i \phi_i=\phi$,
  \[\int_{T-\eps}^T \sum_i \phi_i |\dotX_i|\le \phi^{1/2} \int^T_{T-\eps} (\sum_i \phi_i |\dotX_i|^2)^{1/2}\le \phi^{1/2}\eps^{1/2} \lt(\int_{T-\eps}^T  \sum_i \phi_i |\dotX_i|^2\rt)^{1/2},\]
  we get 
  \[\int_0^T \sum_i \phi_i \dotX_i dt=\lim_{\eps\to 0} \lt(\int_0^{T-\eps}  \sum_i \phi_i \dotX_i dt+ \int_{T-\eps}^T \sum_i \phi_i |\dotX_i| dt\rt)=0.\]
  Combining this with $\sum_i \phi_i=\phi$ and \eqref{Erescale}, we get
 \[\E(\hat{\mu})=\E(\mu)+\frac{1}{T} \phi |X|^2,\]
 from which the first part of the proposition follows noticing that the map $\mu\to \hat \mu$ is one-to-one between admissible measures for $E(T,X)$ and admissible measures for $E(T,\phi,X)$.\\
 The second part follows simply by using the rescaling $\hat{t}:=\phi^{3/2}t$,  $\widehat{\phi}_i:= \phi \phi_i$,
and $\widehat{X}_i:= \phi X_i(\hat{t})$.\\
\end{proof}
\begin{remark}
 Using the same type of rescalings as the one leading to \eqref{rescaleTphi}, it is not hard to prove that $T\to E(T)$ is a continuous function.
\end{remark}

As a consequence of the monotonicity of the support and of the previous lemma, we can derive the following fundamental recursive characterization of $E(T)$.
\begin{lemma}
For every $T>0$,
\begin{equation}\label{recursive}
E(T)=\min_{\sum_{i=1}^N \phi_i=1} \sum_{i=1}^N \phi_i^{3/2} E(T\phi_i^{-3/2}) +\frac{1}{12T} \lt(1-\sum_{i=1}^N \phi_i^3\rt), 
\end{equation}
so that in particular the energy does not change if we reorder the $\phi_i$.
 \end{lemma}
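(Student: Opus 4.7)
\emph{Proof plan.}
The plan is to split \eqref{recursive} into two inequalities between $E(T)$ and the right-hand side, which I will denote by $\min V(N,\phi_i)$. The trivial direction $\min V\le E(T)$ follows from plugging in $N=1$, $\phi_1=1$: the second term vanishes and the first collapses to $E(T)$. The nontrivial direction is $E(T)\le V(N,\phi_i)$ for every admissible choice of $(N,\phi_i)$, which I would prove by constructing an explicit competitor.

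For the construction, fix $(N,\phi_i)$ with $\sum_i\phi_i=1$ and partition $[-1/2,1/2]$ into consecutive sub-intervals $I_i$ of length $\phi_i$ centred at $\bar{X}_i := -\tfrac{1}{2}+\sum_{j<i}\phi_j+\phi_i/2$. Let $\mu_i$ be an optimiser of the sub-problem connecting $\phi_i\delta_0$ at time $0$ to $dx\restr I_i$ at time $T$; by Lemma~\ref{lemrescale} (the shearing identity \eqref{equationEX} composed with the mass rescaling \eqref{rescaleTphi}),
\[
\E(\mu_i)=E(T,\phi_i,-\bar{X}_i)=\phi_i^{3/2}\,E(T\phi_i^{-3/2})+\frac{\phi_i\,\bar{X}_i^2}{T}.
\]
The measure $\mu:=\sum_i\mu_i$ is then admissible for $E(T)$, since $\mu_0=\sum_i\phi_i\delta_0=\delta_0$ and $\mu_T=\sum_i dx\restr I_i=dx\restr[-1/2,1/2]$. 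Both terms defining $\E$ are subadditive under measure sums: the counting integrand $\sharp\{\phi_i\neq 0\}$ can only drop when Diracs from different $\mu_i$'s collide, and the kinetic term $\int(dm/d\mu)^2\,d\mu$ is convex in $m$ (so on an overlap, the mass-weighted average of velocities has squared speed at most the weighted average of the squared speeds). Therefore $\E(\mu)\le\sum_i\E(\mu_i)$.

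To close the argument, a parallel-axis computation for the second moment of $dx\restr[-1/2,1/2]$ gives
\[
\tfrac{1}{12}=\int_{-1/2}^{1/2}x^2\,dx=\sum_i\Bigl(\phi_i\,\bar{X}_i^2+\tfrac{1}{12}\phi_i^3\Bigr)\quad\Longrightarrow\quad\sum_i\phi_i\,\bar{X}_i^2=\tfrac{1}{12}\bigl(1-\sum_i\phi_i^3\bigr),
\]
so that $\E(\mu)\le V(N,\phi_i)$, proving $E(T)\le\min V$ and hence the desired equality. The reordering invariance stated after \eqref{recursive} is then immediate from the symmetry of $V$ in $(\phi_i)$. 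One could alternatively obtain the nontrivial direction by extraction from a minimizer: by the tree structure and monotonicity in Proposition~\ref{reg}, if the minimizer branches at $t=0$ into $N$ pieces of masses $\phi_i$, then its subtrees irrigate consecutive subintervals and are themselves optimal for the corresponding sub-problems, so the same Lemma~\ref{lemrescale} plus parallel axis give $E(T)=V(N,\phi_i)$. Either way, the only points requiring care are (i) the subadditivity of $\E$ (not formally recorded earlier, but elementary from the convexity of the kinetic functional) and (ii) the parallel-axis identity that produces the clean $1/(12T)$ coefficient in the statement.
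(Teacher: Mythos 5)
Your proof is correct and follows the same overall skeleton as the paper's (rescaling/shearing, decomposition over the children at time zero, elementary algebraic identity for the cross-term), but it is organized differently in two useful ways. First, you replace the paper's inductive verification of $\sum_i \phi_i\,\bar X_i^2 = \tfrac1{12}(1-\sum_i\phi_i^3)$ by the parallel-axis identity for the second moment of $dx\restr[-1/2,1/2]$: this gives the same formula in one line and is genuinely cleaner than the $N\to N-1$ induction in the paper. Second, for the direction $E(T)\le V(N,\phi_i)$ you make the superposition-plus-subadditivity argument explicit. The paper essentially asserts the decomposition $E(T)=\min_{\sum\phi_i=1}\sum_i E(T,\phi_i,\bar X_i)$ after invoking the tree structure and monotonicity of traces (Proposition~\ref{reg}), i.e.\ it works directly from the minimizer and its no-loop subsystem decomposition; your version proves the upper bound by gluing optimizers of the sub-problems, using that the atom-count is subadditive and that the Benamou--Brenier integrand $(\mu,m)\mapsto\int(dm/d\mu)^2\,d\mu$ is jointly convex and one-homogeneous, hence subadditive. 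This is a self-contained substitute for the corresponding step; your sketch of the alternative ``extraction from a minimizer'' argument is the one actually closer to the text. One small point worth making explicit in a final write-up: the superposition $\mu=\sum_i\mu_i$ with $m=\sum_i m_i$ really is admissible (finite count, absolute continuity of $m$ w.r.t.\ $\mu$, and the continuity equation by linearity), since this is what licenses $E(T)\le\E(\mu,m)$.
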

\begin{proof}
 Let $\phi_1,..,\phi_N$ be the fluxes of the branches leaving from the point $(0,0)$ (if it is not a branching point then $N=1$). Up to relabeling, we may assume that the $\phi_i$ are ordered i.e. $\phi_1$ corresponds to the first branch, $\phi_2$ to the second and so on.
By the monotonicity of the traces (Proposition \ref{reg}), the $N$ branches are independent and the mass from the first branch will go to $[-1/2,-1/2+\phi_1]$, the second branch will go to $[-1/2+\phi_1,-1/2+\phi_1+\phi_2]$ and so on. Let $\BXi$ be the centers of the intervals of length $\phi_i$ i.e. $\BXi= -\frac{1}{2}+ \sum_{j<i} \phi_j +\frac{\phi_i}{2}$. We first prove that 
From \eqref{equationEX} and \eqref{rescaleTphi} we have 
\begin{equation}\label{toproverecursive}
E(T)=\min_{\sum_{i=1}^N \phi_i =1} \sum_{i=1}^{N} E(T,\phi_i, \BXi)= \min_{\sum_{i=1}^N \phi_i =1} \sum_{i=1}^{N}\phi_i^{3/2}E(T \phi_i^{-3/2}) +\frac{1}{T} \sum_{i=1}^N \phi_i |\BXi|^2,\end{equation}
so that we are left to prove that  for every $(\phi_i)_{i=1}^N$ with $\sum_{i=1}^N \phi_i=1$, there holds
 \begin{equation}\label{sumequalsum}\sum_{i=1}^N \phi_i |\BXi|^2=\frac{1}{12}\lt(1-\sum_{i=1}^N \phi_i^3\rt).\end{equation}

 We prove this by induction on $N$. For $N=1$, there is nothing to prove. For $N=2$, since $\phi_2=1-\phi_1$, the left-hand side of \eqref{sumequalsum} is equal to 
 \[\phi_1\lt(\frac{-1+\phi_1}{2}\rt)^2+(1-\phi_1)\lt(\frac{\phi_1}{2}\rt)^2=\frac{1}{4}\phi_1(1-\phi_1),\]
 which is equal to the right-hand side of \eqref{sumequalsum}.\\
 Assume now that \eqref{sumequalsum} holds for $N-1$. Let then $\phi=\phi_1+\phi_2$ and $\overline{X}=\overline{X}_1+\frac{\phi_2}{2}$. By the induction hypothesis,
 \[\sum_{i=3}^{N} \phi_i |\BXi|^2 +\phi |\overline{X}|^2=\frac{1}{12}\lt(1-\sum_{i=3}^N \phi^3_i -\phi^3\rt)\]
 so that 
 \[\sum_{i=1}^N \phi_i |\BXi|^2=\frac{1}{12}\lt(1-\sum_{i=1}^N \phi_i^3\rt)+\frac{1}{12}(\phi_1^3+\phi_2^3-\phi^3) +\phi_1|\overline{X}_1|^2+\phi_2|\overline{X}_2|^2-\phi|\overline{X}|^2.\]
 We are thus left to prove that 
 \begin{equation}\label{toprovesum}
  \frac{1}{12}(\phi_1^3+\phi_2^3-\phi^3) +\phi_1|\overline{X}_1|^2+\phi_2|\overline{X}_2|^2-\phi|\overline{X}|^2=0.
 \end{equation}
By definition of $\phi$, $\overline{X}$ and since $\overline{X}_2=\overline{X}_1+\frac{\phi_1+\phi_2}{2}$, we have 
\[
 \frac{1}{12}(\phi_1^3+\phi_2^3-\phi^3)=-\frac{\phi_1\phi_2}{4}(\phi_1+\phi_2)=-\frac{\phi_1\phi_2 \phi}{4}.
\]
Analogously we can compute
\begin{align*}
 \phi_1|\overline{X}_1|^2+\phi_2|\overline{X}_2|^2-\phi|\overline{X}|^2= &  \phi_1|\overline{X}_1|^2+\phi_2|\overline{X}_1+\frac{\phi}{2}|^2-\phi|\overline{X}_1+\frac{\phi_2}{2}|^2\\
 =& \phi_1|\overline{X}_1|^2+\phi_2|\overline{X}_1|^2 +\phi_2\phi \overline{X}_1+\frac{\phi_2\phi^2}{4}-\phi|\overline{X}_1|^2-\phi\phi_2\overline{X}_1-\frac{\phi\phi_2^2}{4}\\
 =&\frac{\phi_2\phi}{4}(\phi-\phi_2)\\
 =&\frac{\phi_1\phi_2\phi}{4}.
\end{align*}
Adding these two equalities we get \eqref{toprovesum} which concludes the proof of \eqref{sumequalsum}.
\end{proof}

% Let us notice that 
% \[\sum_{i=1}^N \phi_i |\BXi|^2=\sum_i W_2^2(\phi_i\delta_0, dx\restr[-\phi_i/2+\BXi,\phi_2+\BXi]).\]
Before going further, let us point out that for $T,t>0$ using as test configuration for $E(T+t)$, $\delta_0$ in $[0,t]$ extended by the minimizer of $E(T)$ in $[t,T+t]$, we obtain
\begin{equation}\label{estim1}
 E(T+t)\le E(T)+t.
\end{equation}
This together with \eqref{recursive}, motivates the  introduction of  the largest branching time:
\begin{equation}\label{Tstar}T_*=\inf\{T \ : E(T+t)=E(T)+t, \quad \forall t\ge 0\}.\end{equation}
By definition of $\Bl$ and \eqref{estim1}, we see that for every $\eps>0$, $E(\Bl-\eps)+\eps>E(\Bl)$ which means that every minimizer of \eqref{recursive} for $T=\Bl$
must have $N>1$ branches (there must be branching at time zero).\\
We will also need the following simple lemma.
\begin{lemma}\label{straight}
 Let  $X\in \R$, $T,\phi>0$ with $T\phi^{-3/2}>\Bl$. And let $\mu_t$ be a minimizer for $E(T,\phi,X)$. Letting $X(t):=(1-\frac{t}{T})X$, for $t\in[0,T-\phi^{3/2}\Bl]$ there holds,  $\mu_t=\phi \delta_{X(t)}$.
\end{lemma}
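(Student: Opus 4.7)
The plan is to first use Lemma~\ref{lemrescale} to reduce to the normalized case $\phi=1$, $X=0$. The shear $\widetilde X_i(t)=X_i(t)-(1-t/T)X$ together with~\eqref{equationEX} bijects minimizers of $E(T,\phi,X)$ with minimizers of $E(T,\phi)$, and the rescaling $\hat t=\phi^{3/2}t$, $\hat X=\phi X$, $\hat\phi_i=\phi\phi_i$ sends this in turn to a minimizer of $E(T\phi^{-3/2})$. Under this correspondence the affine curve $X(t)=(1-t/T)X$ becomes the constant zero curve, so it suffices to prove that for $T>T_*$ every minimizer $\mu$ of $E(T)$ satisfies $\mu_t=\delta_0$ on $[0,T-T_*]$.

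Fix such a $\mu$ and let $t_0\in(0,T)$ denote its first branching time; the case $t_0=0$ of immediate branching at the root is discussed below. By Proposition~\ref{reg}(i),(iii), $\mu_t=\delta_{X(t)}$ on $[0,t_0]$ with $X$ affine and $X(0)=0$. Splitting the energy along this initial segment and the forward subsystem emanating from $(X(t_0),t_0)$, and invoking the translation identity~\eqref{equationEX}, I obtain
\[
E(T)=t_0+\frac{|X(t_0)|^2}{t_0}+E(T-t_0)+\frac{|X(t_0)|^2}{T-t_0}.
\]
By~\eqref{estim1} and the infimum definition of $T_*$, the map $S\mapsto E(S)-S$ is non-increasing, equals $E(T_*)-T_*$ on $[T_*,\infty)$, and is strictly larger on $[0,T_*)$. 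Since $T>T_*$, $E(T)-T=E(T_*)-T_*$, so rearranging gives
\[
|X(t_0)|^2\left(\frac{1}{t_0}+\frac{1}{T-t_0}\right)=(E(T_*)-T_*)-\bigl(E(T-t_0)-(T-t_0)\bigr)\le 0,
\]
with strict inequality if $T-t_0<T_*$. Hence $T-t_0\ge T_*$ and $X(t_0)=0$, yielding $\mu_t=\delta_0$ on $[0,t_0]$ and $t_0\le T-T_*$.

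To upgrade to $t_0=T-T_*$, I argue by descent on $T$: if $t_0<T-T_*$, then $T-t_0>T_*$ and the forward subsystem at $(0,t_0)$, shifted in time by $-t_0$, is itself a minimizer of $E(T-t_0)$ starting from $\delta_0$ that branches at its time zero. Applying the lemma inductively at $T-t_0<T$ forbids branching on the non-degenerate interval $[0,(T-t_0)-T_*]$, which contradicts immediate branching. Formally, the induction closes by taking $T_\sharp$ to be the infimum of parameters $T>T_*$ at which the conclusion fails and using the above to extract a failure at the strictly smaller $T-t_0<T_\sharp$, contradicting minimality of $T_\sharp$.

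The main obstacle I anticipate is the edge case $t_0=0$ of immediate branching at the root, which the descent does not directly reach since the subtrees rooted at $(0,0)$ carry rescaled times $T\phi_i^{-3/2}>T$ rather than smaller ones. For this case I would substitute $E(T\phi_i^{-3/2})=E(T_*)+T\phi_i^{-3/2}-T_*$ into the recursion~\eqref{recursive} to derive the identity $(E(T_*)-T_*)(1-\sum_i\phi_i^{3/2})=(N-1)T+(1-\sum_i\phi_i^3)/(12T)$ for any putative immediate-branching minimizer with $N\ge 2$, and compare with the corresponding equality at $T_*$ provided by~\eqref{eq:introalternativeq} at the optimal $\phi^*$ to produce a contradiction with $T>T_*$.
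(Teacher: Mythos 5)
Your reduction via Lemma~\ref{lemrescale} to the normalized claim ``for $T>T_*$ every minimizer of $E(T)$ equals $\delta_0$ on $[0,T-T_*]$'' is the right move, and your treatment of a strictly positive first branching time $t_0$ is correct: splitting the energy at $t_0$, using~\eqref{equationEX} for the tail and~\eqref{estim1} forces $X(t_0)=0$ and $T-t_0\ge T_*$ or leaves a forward subsystem of $E(T-t_0)$, $T-t_0>T_*$, branching immediately. You also correctly spot that the whole weight of the lemma sits on the edge case $t_0=0$. The paper's proof is a one-liner that folds this into ``by definition of $T_*$''; definition~\eqref{Tstar} controls the energy profile, not directly the branching structure of \emph{every} minimizer, so the content you are trying to supply is genuinely needed.

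The problem is that your proposed resolution of the edge case does not close it. Substituting $E(T\phi_i^{-3/2})=E(T_*)+T\phi_i^{-3/2}-T_*$ into~\eqref{recursive} does give
\[
(E(T_*)-T_*)\Bigl(1-\sum_i\phi_i^{3/2}\Bigr)=(N-1)T+\frac{1}{12T}\Bigl(1-\sum_i\phi_i^{3}\Bigr),
\]
but ``comparing with~\eqref{eq:introalternativeq} at the optimal $\phi^*$'' does not yield a contradiction: the masses on the two sides are different, so the only usable comparison is the inequality version of~\eqref{alternativeq} tested at your $\phi$, namely
\[
(E(T_*)-T_*)\Bigl(1-\sum_i\phi_i^{3/2}\Bigr)\le (N-1)T_*+\frac{1}{12T_*}\Bigl(1-\sum_i\phi_i^{3}\Bigr).
\]
Subtracting the two and dividing by $T-T_*>0$ gives only
$(N-1)\,T\,T_*\le \tfrac{1}{12}\bigl(1-\sum_i\phi_i^3\bigr)<\tfrac{1}{12}$,
which is not a contradiction (indeed it is satisfied with room to spare by the true values $T=T_*=\tfrac14$, $N=2$, $\phi_i=\tfrac12$). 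Even if you additionally invoke an equipartition relation $T^2(N-1)=\tfrac1{12}(1-\sum\phi_i^3)$ at level $T$ (which you can derive by the same time-shift perturbation the paper uses for~\eqref{mainestim}, without any reference to Lemma~\ref{straight}), the pair of relations collapses, after elimination, to $(T-T_*)^2\ge 0$ and carries no further information. So the gap you identified is real, and the substitution/comparison you outline leaves it open; some extra input on $T_*$ or on the structure of the recursion minimizers for $T\in(T_*,\tfrac14]$ is needed to rule immediate branching out.

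Two further remarks on logical hygiene. First, \eqref{alternativeq} appears later in the paper than Lemma~\ref{straight}; its proof does not actually use the lemma, so the forward reference can be repaired, but you should say so. Second, when you ``apply the lemma inductively at $T-t_0<T$'', note that $T-t_0$ is not bounded away from $T_*$, so your formal $T_\sharp$--infimum argument still needs the immediate-branching case handled at every scale down to $T_*$; it is not a base case you can postpone.
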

\begin{proof}
 Since $T\phi^{-3/2}>\Bl$, by definition of $\Bl$, in $[0,T\phi^{-3/2}-\Bl]$, every minimizer of $E(T\phi^{-3/2})$ is of the form $\delta_0$.
 Therefore, by \eqref{equationEX} and \eqref{rescaleTphi}, if $X(t):=(1-\frac{t}{T})X$, then for $t\in[0,T-\phi^{3/2}\Bl]$,  $\mu_t=\phi \delta_{X(t)}$.
\end{proof}

We can now state the main result of this section.
\begin{proposition}\label{propTstar}
 We have $\Bl=1/4$ and if $\phi_1,..,\phi_N$ are optimal in \eqref{recursive} for $T=\Bl$, then $N=2$ and $\phi_1=\phi_2=1/2$. Moreover,
 \[E(1/4)-1/4=\frac{1}{2-\sqrt{2}}.\]
\end{proposition}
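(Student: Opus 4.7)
\emph{Plan.} The engine is the recursive formula \eqref{recursive} evaluated at $T=T_*$. First, by Lemma \ref{straight} and the identity $E(T)=E(T_*)+(T-T_*)$ for $T\ge T_*$, each branch contributes $E(T_*\phi_i^{-3/2})=E(T_*)+T_*(\phi_i^{-3/2}-1)$; substituting into \eqref{recursive} and dividing by $1-\sum\phi_i^{3/2}>0$ (nonzero because $N\ge 2$ at $T_*$, as otherwise $T_*$ would not be the first branching time) yields \eqref{eq:introalternativeq} with equality at every optimal splitting. Combining with $\sum\phi_i^{3/2}\ge N^{-1/2}$ and the definition of $\alpha_N$ gives the lower bound
\begin{equation*}
\frac{E(T_*)-T_*}{T_*}\ge\frac{N-1}{1-N^{-1/2}}+\frac{\alpha_N}{12T_*^2}.
\end{equation*}

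For the complementary upper bound, I would compute the energy of the self-similar dyadic competitor $\mu^*$ directly via Lemma \ref{lemmacurves}; exploiting self-similarity the energy satisfies a one-step recursion with ratio $\rho=(1/2)^{3/2}$, from which $E(1/4)\le \frac{1}{4}+\frac{1}{2-\sqrt{2}}$. Using that $T\mapsto E(T)-T$ is non-increasing by \eqref{estim1}, a careful comparison of the lower bound above with this upper bound yields both $T_*\le 1/4$ and an explicit restriction $N\le N_0$. The remaining cases $3\le N\le N_0$ are then ruled out via the computer-assisted lower bounds on $\alpha_N$ from Lemma \ref{lem:Nge3}, which are sharp enough to make the combined bounds inconsistent throughout the relevant range of $T_*$. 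I expect this computer-assisted step to be the main obstacle, since the $\alpha_N$ for $N\ge 3$ are not analytically tractable.

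Having reduced to $N=2$, for $\phi_1+\phi_2=1$ the identity \eqref{eq:introalternativeq} becomes
\begin{equation*}
E(T_*)-T_*=\frac{T_*+\phi_1(1-\phi_1)/(4T_*)}{1-\phi_1^{3/2}-(1-\phi_1)^{3/2}},
\end{equation*}
using $1-\phi_1^3-(1-\phi_1)^3=3\phi_1(1-\phi_1)$. AM--GM gives $T_*+\phi_1(1-\phi_1)/(4T_*)\ge\sqrt{\phi_1(1-\phi_1)}$ with equality iff $T_*=\sqrt{\phi_1(1-\phi_1)}/2$. Setting $s=\sqrt{\phi_1}+\sqrt{1-\phi_1}\in[1,\sqrt{2}]$, one has $\sqrt{\phi_1(1-\phi_1)}=(s^2-1)/2$ and $1-\phi_1^{3/2}-(1-\phi_1)^{3/2}=(s-1)^2(s+2)/2$, so the inequality $(2-\sqrt{2})\sqrt{\phi_1(1-\phi_1)}\ge 1-\phi_1^{3/2}-(1-\phi_1)^{3/2}$ reduces to $(s-\sqrt{2})(s-1+2\sqrt{2})\le 0$, which holds on $[1,\sqrt{2}]$ with equality only at $s=\sqrt{2}$, i.e.\ at $\phi_1=1/2$. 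Chaining these yields $E(T_*)-T_*\ge\frac{1}{2-\sqrt{2}}$; matched with the upper bound, all inequalities are equalities, forcing $\phi_1=\phi_2=1/2$, $T_*=\sqrt{1/4}/2=1/4$, and $E(1/4)-1/4=\frac{1}{2-\sqrt{2}}$.
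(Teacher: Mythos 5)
Your proposal is correct in substance, and for the $N=2$ case it takes a genuinely different and more elegant route than the paper. The paper's strategy is: (a) prove $T_*\le 1/4$ via the equipartition identity \eqref{mainestim} (obtained by varying in $T$ around $T_*$), which upgrades the lower bound on $(E(T_*)-T_*)/T_*$ to $2(N-1)/(1-\sum\phi_i^{3/2})\ge 2\sqrt{2}/(\sqrt{2}-1)$ and forces $T_*\le 1/4$ upon comparison with \eqref{upperboundE}; (b) exclude $N\ge 3$ via $\alpha_N$ bounds; (c) obtain a lower bound $T_*\ge T_-$ from \eqref{quadraticT} and then, in Proposition \ref{prop:Neq2}, prove the two-variable inequality \eqref{tofinishtheproof} over the whole rectangle $[T_-,1/4]\times[0,1]$ by a fairly delicate calculus argument (factoring $P(X)$, tracking the roots $X_\pm$, showing $\Psi'\le 0$, and one numerical verification). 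Your AM--GM step $T_*+\phi_1(1-\phi_1)/(4T_*)\ge\sqrt{\phi_1(1-\phi_1)}$ together with the clean $s=\sqrt{\phi_1}+\sqrt{1-\phi_1}$ algebra (the identities $\sqrt{\phi_1(1-\phi_1)}=(s^2-1)/2$ and $1-\phi_1^{3/2}-(1-\phi_1)^{3/2}=(s-1)^2(s+2)/2$ are both correct, and the reduction to $(s-\sqrt{2})(s-1+2\sqrt{2})\le 0$ checks out) replaces the whole of Proposition \ref{prop:Neq2} and, as a bonus, does not require the a priori window $T_*\in[T_-,1/4]$: it works for any $T_*>0$, and equality directly pins down $\phi_1=1/2$ and $T_*=1/4$. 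This is a real simplification of the endgame.

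One claim in the middle is misleading, though it does not invalidate the argument. You assert that "a careful comparison of the lower bound above with this upper bound yields both $T_*\le 1/4$ and an explicit restriction $N\le N_0$." The restriction $N\le N_0$ is indeed what \eqref{criterionalphaN} gives (using only $E(T_*)-T_*\le E(1/4)-1/4\le\frac{1}{2-\sqrt{2}}$, which holds by monotonicity of $T\mapsto E(T)-T$, without knowing $T_*\le 1/4$). But the bound $T_*\le 1/4$ does \emph{not} follow from comparing your stated lower bound
\[
\frac{E(T_*)-T_*}{T_*}\ge\frac{N-1}{1-N^{-1/2}}+\frac{\alpha_N}{12T_*^2}
\]
with the dyadic upper bound: at $N=2$, $\alpha_2=2$, the resulting constraint only confines $T_*$ to an interval whose right endpoint is strictly larger than $1/4$. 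The paper needs the sharper equipartition identity \eqref{mainestim} (a first-order condition in $T$) to conclude $(E(T_*)-T_*)/T_*\ge 2\sqrt 2/(\sqrt 2-1)$ and hence $T_*\le 1/4$. Fortunately, your argument never actually uses $T_*\le 1/4$: the exclusion of $N\ge 3$ goes through with the weaker monotonicity bound, and your AM--GM step for $N=2$ forces $T_*=1/4$ on its own. I would drop that sentence (or replace it by the correct statement that monotonicity gives $E(T_*)-T_*\le\frac{1}{2-\sqrt 2}$) so as not to suggest a derivation that does not work as described.
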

The proof of this proposition will consist of the remaining part of this section. Before doing so, let us see how it implies Theorem \ref{main}. In the proof, we will use the following notation

\begin{definition}
 For $\mu\in \calA^*_{a,b}$ with $\mu_t=\sum_i \phi_i \delta_{X_i}$ and $X\in \R$, let $S_X(\mu)$ be the measure defined by $(S_X(\mu))_t:=\sum_i \phi_i \delta_{X_i+X}$. 
\end{definition}

\begin{proof}[Proof of Theorem \ref{main}]
 By definition of $\Bl$, for $T\ge 1/4$, we have $E(T)=E(1/4)+(T-1/4)$ and if $\mu$ is a minimizer for $E(T)$, then it coincides with $\delta_0$ in $[0,T-1/4]$ and with (a translated version of) a minimizer for $E(1/4)$ in $[T-1/4,T]$. 
 Therefore, it is enough to prove that for $T=1/4$, the only minimizer of $E(1/4)$ is given by $\mu^*$.\\
 
 Let $\mu$ be such a minimizer and let us prove by induction that $\mu=\mu^*$. Recall first that we defined  $t_k=\frac{1}{4}\lt(1-\left(\frac{1}{2}\rt)^{3k/2}\rt)$. 
 Assume that $\mu_t=\mu^*_t$  for  $t\in[0,t_{k-1}]$ and that $\mu_{t_{k-1}}=2^{-(k-1)}\sum_{i=1}^{2^{k-1}} \delta_{X_i^{k-1}} $ for some  ordered $X_i^{k-1}\in \R$.
 By monotonicity of the support, in $[t_{k-1},1/4]$, each of the forward subsystems $\mu^{+,i}$ emanating from $X_i^{k-1}$ must be of the form  $\mu^{+,i}_t= S_{\BXi^{k-1}}(\mu^{i}_{t-t_{k-1}})$ 
 where $\mu^{i}$ is a minimizer of $E(\frac{1}{4}-t_{k-1},2^{-(k-1)}, X_i^{k-1}-\BXi^{k-1})=E(\frac{1}{4} (2^{-(k-1)})^{3/2},2^{-(k-1)}, X_i^{k-1}-\BXi^{k-1})$. By \eqref{rescaleTphi} and Proposition \ref{propTstar}, every minimizer 
 of $E(\frac{1}{4} (2^{-(k-1)})^{3/2},2^{-(k-1)}, X_i^{k-1}-\BXi^{k-1})$ must branch into two pieces of equal mass. Thus, we can further decompose $\mu^{i}=\mu^{i,1}+\mu^{i,2}$ where
 $\mu^{i,1}=S_{-2^{-(k+1)}}(\nu^{i,1})$ with $\nu^{i,1}$ a minimizer for 
 $E(\frac{1}{4} (2^{-(k-1)})^{3/2},2^{-k}, 2^{-(k+1)}+X_i^{k-1}-\BXi^{k-1})$ and similarly for $\mu_i^2$.
 Let 
 \begin{align*}Y_{2i-1}^k(s)&:= -2^{-(k+1)}+(1-\frac{s}{\frac{1}{4}-t_{k-1}})(X_{i}^{k-1}-\overline{X}_{2i-1}^k)  \qquad \textrm{ and } \\ 
  Y_{2i}^k(s)&:= 2^{-(k+1)}+(1-\frac{s}{\frac{1}{4}-t_{k-1}})(X_{i}^{k-1}-\overline{X}_{2i}^k),
 \end{align*}

 Since $\BXi^{k-1}-2^{-(k+1)}=\overline{X}^k_{2i-1}$ and $\BXi^{k-1}+2^{-(k+1)}=\overline{X}^k_{2i}$, 
 by Lemma \ref{straight}, for $s\in[0,t_k-t_{k-1}]$, $\mu^i_s= 2^{-k} (\delta_{Y_{2i-1}^k(s)}+\delta_{Y_{2i}^k(s)})$ and thus letting
 
 \begin{align*}X_{2i-1}^k(t)&:= \frac{t-t_{k-1}}{\frac{1}{4}-t_{k-1}}(\overline{X}_{2i-1}^k-X_{i}^{k-1})+ X_i^{k-1}  \qquad \textrm{ and } \\ 
  X_{2i}^k(t)&:= \frac{t-t_{k-1}}{\frac{1}{4}-t_{k-1}}(\overline{X}_{2i}^k-X_{i}^{k-1})+ X_i^{k-1},
 \end{align*}
we finally obtain as claimed that for $t\in[t_{k-1},t_k]$,
\[\mu^{+,i}_t=2^{-k} (\delta_{X_{2i-1}^k(t)}+\delta_{X_{2i}^k(t)}).\]

\end{proof}
We may  start investigating the properties of  $\Bl$.
\begin{lemma}
 There holds
 \[0<T_*<\infty.\]
 As a consequence, the infimum in \eqref{Tstar} is attained.
\end{lemma}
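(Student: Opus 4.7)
The plan is to recast the definition \eqref{Tstar} in terms of the function $g(T):=E(T)-T$. By \eqref{estim1} $g$ is nonincreasing on $(0,\infty)$; since $\sharp\{\phi_i\neq 0\}\ge 1$ one has $E(T)\ge T$, so $g\ge 0$; and by the remark after Lemma~\ref{lemrescale}, $g$ is continuous. Hence $L:=\lim_{T\to\infty} g(T)\in[0,\infty)$ exists, and the identity $E(T+t)=E(T)+t$ for every $t\ge 0$ is equivalent to $g(T)=L$. The proof thus splits into showing $0<T_*=\inf\{T:g(T)=L\}<\infty$, with attainment then following from closedness of $\{g=L\}$.

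For $T_*>0$ I would exploit that $E$ blows up at $0$. The Wasserstein control \eqref{HolderW2}, together with $W_2^2(\delta_0,dx\restr[-1/2,1/2])=1/12$, yields $E(T)\ge 1/(12T)$. If $T_*=0$, the definition of the infimum produces a sequence $T_n\to 0^+$ with $E(T_n+1)=E(T_n)+1$, while prepending a trunk of length $T_n$ to a minimizer of $E(1)$ in \eqref{estim1} gives the reverse $E(T_n+1)\le E(1)+T_n$; combining, $1/(12T_n)\le E(T_n)\le E(1)+T_n-1$, a contradiction as $T_n\to 0^+$.

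For $T_*<\infty$, the crux is to show that for $T$ large, every minimizer of $E(T)$ starts with a stationary trunk. Using $g\ge 0$, for any partition with $N\ge 2$ in \eqref{recursive} one has $E(T\phi_i^{-3/2})\ge T\phi_i^{-3/2}$, so
\[
\sum_i \phi_i^{3/2}E(T\phi_i^{-3/2})+\frac{1}{12T}\Big(1-\sum_i\phi_i^3\Big)\ge T\sum_i \phi_i^{3/2}\phi_i^{-3/2}=NT\ge 2T.
\]
Meanwhile monotonicity of $g$ gives $E(T)\le T+g(1)$ for $T\ge 1$. Hence for $T>T_0:=\max(1,g(1))$ the $N\ge 2$ options strictly exceed $E(T)$, so the minimum in \eqref{recursive} is attained only at $N=1$; Lemma~\ref{lemrescale} (which penalizes a moving single Dirac by an extra $\phi|X|^2/T$) then forces the minimizer to coincide with $\delta_0$ on some maximal interval $[0,\tau(T)]$ with $\tau(T)>0$. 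Writing $a(T):=T-\tau(T)$, the piece on $[\tau(T),T]$ is a minimizer of $E(a(T))$ which, by maximality of $\tau(T)$, branches at its own initial time; by the very same argument this forces $a(T)\le T_0$.

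The stabilization is then a two-line squeeze: for $T_0<T_1<T_2$, monotonicity gives $g(T_2)\le g(T_1)$, while $g(T_2)=g(a(T_2))\ge g(T_1)$ since $a(T_2)\le T_0<T_1$. Hence $g\equiv L$ on $(T_0,\infty)$, so $T_*\le T_0<\infty$; continuity of $g$ makes $\{g=L\}$ closed, extending the equality to $[T_*,\infty)$ and delivering the attainment. I expect the main obstacle to be the trunk-extraction step---specifically, arguing that the $N=1$ branch of the recursion really corresponds to a \emph{stationary} trunk (rather than a moving single Dirac) and that $\tau(T)$ is genuinely maximal so that the inner piece branches at its initial time. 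Both points rely on the invariance/shear formulas of Lemma~\ref{lemrescale} combined with the strict comparison $2T>E(T)$ above.
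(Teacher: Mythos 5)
Your proof is correct and reaches the paper's conclusions by closely related ideas, though it takes a somewhat more elaborate route. The argument for $T_*>0$ is essentially the paper's: both play the blow-up $E(T)\gtrsim 1/T$ (from \eqref{HolderW2} applied to the trace at $t=T$) against the linear-growth upper bound $E(T_n+1)\le E(1)+T_n$ coming from \eqref{estim1}, extracting a contradiction as $T_n\to 0$. For $T_*<\infty$, the paper argues directly from the no-loop Lemma~\ref{noloop}: if the first branching is at $t_0$, then $E(T)\ge 2(T-t_0)$, which together with $E(T)\le E(1)+T-1$ pins $t_0\ge T-T_1$ with $T_1:=E(1)-1$, and then $E(T)=E(T_1)+(T-T_1)$ for $T\ge T_1$. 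You instead route through the recursion \eqref{recursive} together with the elementary bound $E\ge T$ to exclude $N\ge 2$ branches at the root for $T>T_0:=\max(1,g(1))$, extract a stationary trunk via Lemma~\ref{lemrescale}, then close with a monotonicity squeeze on $g(T)=E(T)-T$; your $T_0$ plays the role of the paper's $T_1$. The two approaches are logically equivalent; yours makes the role of $g$ and of the attainment argument more explicit, at the cost of invoking the (heavier) recursion formula where the paper uses only the no-loop property. One place to be a little more precise in your write-up is the trunk-extraction step: since $X_1(t_0)=t_0\dot X_1$, write $E(T)=t_0+t_0|\dot X_1|^2+E(T-t_0)+|X_1(t_0)|^2/(T-t_0)$ using Lemma~\ref{lemrescale}, compare against the upper bound $E(T)\le t_0+E(T-t_0)$ from \eqref{estim1}, and deduce $X_1(t_0)=0$; as phrased, you only mention the shear penalty $\phi|X|^2/T$ and leave the reader to supply this comparison.
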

\begin{proof}
We first observe that for every $T>0$, by \eqref{HolderW2}
 \begin{equation}\label{estimbelowE}E(T)\ge T+\frac{W_2^2(\delta_0,dx\restr[-1/2,1/2])}{T}.\end{equation}
Let us  prove that $\Bl<\infty$.  Let $T\ge 1$ and $\mu_t$ be a minimizer for $E(T)$. By \eqref{estim1}, for every $T\ge 1$, 
 \[E(T)\le E(1)+(T-1).\]
By the no-loop condition, if $\mu_t$  has its first branching at time $t_0$ then in $[t_0,T]$ it has at least two branches and thus
 \[E(T)\ge 2(T-t_0).\]
 Putting these two inequalities together we get $t_0\ge T-(E(1)-1)$. Letting $T_1:= E(1)-1$, which is positive by \eqref{estimbelowE}, and assuming that $T\ge T_1$, this  implies that before  $T-T_1$, 
 no branching may occur. Hence, for $T\ge T_1$,
 \[E(T)=E(T_1)+ (T-T_1),\]
 that is $\Bl\le T_1$.\\
 We now prove that $\Bl>0$. By \eqref{estimbelowE}, if $\Bl=0$, for every $T_1\le T$,
\[E(T)=E(T_1)+T-T_1\ge T +\frac{W_2^2(\delta_0,dx\restr[-1/2,1/2])}{T_1}\]
which letting $T_1\to 0$ would give a contradiction to $E(T)<\infty$. The fact that the  infimum in \eqref{Tstar} is attained follows by continuity of $T\to E(T)$.
 \end{proof}
 The next result is a form of equipartition of energy which will be used to prove that $T_*\le 1/4$.

\begin{lemma}
 If $\phi_1,..,\phi_N>0$ are  such that
 \[E(\Bl)= \sum_{i=1}^N \phi_i^{3/2} E(\Bl \phi_i^{-3/2})+ \frac{1}{12\Bl} \lt(1-\sum_{i=1}^N \phi_i^3\rt),\]
 then 
 \begin{equation}\label{mainestim}
  \Bl(N-1)= \frac{1}{12\Bl} \lt(1-\sum_{i=1}^N \phi_i^3\rt).
 \end{equation}

\end{lemma}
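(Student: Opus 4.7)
The plan is to exploit the admissibility of the optimal splitting $\phi^* := (\phi_1,\dots,\phi_N)$ as a test configuration at values of $T$ near $\Bl$, producing an upper envelope for $E$ that touches it at $\Bl$. Setting
\[
f(T,\phi^*) := \sum_{i=1}^N \phi_i^{3/2}\, E(T\phi_i^{-3/2}) + \frac{1}{12T}\Bigl(1-\sum_{i=1}^N \phi_i^3\Bigr),
\]
the recursive characterization \eqref{recursive} gives $E(T)\le f(T,\phi^*)$ for every $T>0$, with equality at $T=\Bl$ by hypothesis. Thus $g(T) := f(T,\phi^*)-E(T)\ge 0$ vanishes at $\Bl$, and the idea is to turn this two-sided contact into the equality \eqref{mainestim}.

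The first ingredient is that since $N\ge 2$ (by the remark following \eqref{Tstar}) and $\sum_i \phi_i=1$ with $\phi_i>0$, each $\phi_i$ is strictly less than $1$, so $\Bl\phi_i^{-3/2}>\Bl$. Hence for $T$ close to $\Bl$ every $T\phi_i^{-3/2}$ stays above $\Bl$, and the definition of $\Bl$ lets me substitute $E(T\phi_i^{-3/2})=E(\Bl)+T\phi_i^{-3/2}-\Bl$ inside $f(T,\phi^*)$. After using the optimality identity $f(\Bl,\phi^*)=E(\Bl)$ to eliminate the terms involving $E(\Bl)\sum_i\phi_i^{3/2}$ and $\Bl\sum_i\phi_i^{3/2}$, I expect the expansion to collapse, writing $T=\Bl+s$ and $A := \frac{1}{12}(1-\sum_i \phi_i^3)$, to the clean form
\[
f(\Bl+s,\phi^*)-\bigl(E(\Bl)+s\bigr) \;=\; s\left[(N-1)-\frac{A}{\Bl(\Bl+s)}\right].
\]

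The second ingredient is the two-sided lower bound $E(\Bl+s)\ge E(\Bl)+s$ valid for all small $|s|$: for $s\ge 0$ this is the defining property of $\Bl$, while for $s<0$ it follows from \eqref{estim1} applied with $T=\Bl+s$ and $t=-s$. Combined with $f(\Bl+s,\phi^*)\ge E(\Bl+s)$, this forces
\[
s\left[(N-1)-\frac{A}{\Bl(\Bl+s)}\right]\ge 0 \qquad \text{for all small }|s|.
\]
Since $s$ changes sign at $0$, the bracket must vanish there, yielding $(N-1)=A/\Bl^2$, which is precisely \eqref{mainestim}.

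The main obstacle is essentially bookkeeping: the clean form of the bracket only emerges after juggling the affine dependence of $E$ above $\Bl$ against the optimality identity $E(\Bl)=f(\Bl,\phi^*)$ to cancel several intermediate terms. The key structural point is the strict inequality $\phi_i<1$, which ensures that the expansion of each $E(T\phi_i^{-3/2})$ remains valid on both sides of $\Bl$; it is exactly this two-sided validity that upgrades the perturbation argument from a one-sided inequality to the equality \eqref{mainestim}.
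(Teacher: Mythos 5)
Your proof is correct, and it takes a genuinely different route from the paper's. The paper works at the level of measures: it uses Lemma \ref{straight} to observe that near $t=0$ the optimal $X_i$ travel along the straight lines $\frac{t}{\Bl}\BXi$, and then builds explicit competitors for $E(\Bl\pm\eps)$ by reparametrizing that initial segment in time; comparing the resulting one-sided difference quotients with the slope $1$ of $E$ at $\Bl$ yields the two inequalities $N-\frac{1}{\Bl^2}\sum_i\phi_i|\BXi|^2\lessgtr 1$, and \eqref{sumequalsum} converts this into \eqref{mainestim}. Your argument instead stays entirely at the level of the value function $E$: you use the recursion \eqref{recursive} only as the one-sided bound $E(T)\le f(T,\phi^*)$ together with the affine behavior $E(T)=E(\Bl)+T-\Bl$ for $T\ge\Bl$ (available inside $f$ because $\phi_i<1$ keeps every $T\phi_i^{-3/2}$ above $\Bl$ for $T$ near $\Bl$), and the two-sided lower bound $E(\Bl+s)\ge E(\Bl)+s$ (from the definition of $\Bl$ on one side, from \eqref{estim1} on the other). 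The sign of $s$ then forces the bracket to vanish. This is cleaner and more elementary: it avoids Lemma \ref{straight}, the explicit reparametrization, the expansion in $\eps$, and the detour through \eqref{sumequalsum}. What the paper's constructive proof buys is transparency about which geometric perturbations are being competed against, which is in the spirit of the rest of Section \ref{irrig}; your proof buys brevity and a purely algebraic mechanism (one-sided envelope touching a kink of the value function) that would transfer directly to any analogous recursion. One small point to state explicitly in a final write-up: you tacitly use $\sum_i\phi_i=1$ (to get $\phi_i<1$ and to identify the hypothesis with optimality in \eqref{recursive}); this is implicit in the lemma's setting but worth recording, and the degenerate case $N=1$ makes both sides of \eqref{mainestim} vanish, so it does not need the $\phi_i<1$ step.
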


\begin{proof}
By \eqref{sumequalsum}, if we denote by $\BXi$ the barycenters of the intervals of length $\phi_i$, it is enough to prove that 
\begin{equation}\label{mainestimbis}
\Bl(N-1)=\frac{1}{\Bl}  \sum_{i=1}^N \phi_i |\BXi|^2. 
\end{equation}
   Let $\mu_t=\sum_i\phi_i \delta_{X_i}$ be optimal for $\Bl$. By definition of $\Bl$, the fact that $\Bl\phi^{-3/2}_i>\Bl$ and Lemma \ref{straight}, there is $\beps>0$ such that for $t\in [0,\beps]$, $X_i(t)=\frac{t}{\Bl} \BXi$. 
   For $\beps>\eps>0$, we are going to construct a competitor for $E(\Bl-\eps)$. In $[\beps-\eps,\Bl-\eps]$, let $Y_i(t):=X_i(t+\eps)$ and 
 in $[0,\beps-\eps]$, $Y_i(t):=\frac{1}{\Bl}\frac{\beps}{\beps-\eps} t \BXi$ so that $Y_i(\beps-\eps)=X_i(\beps)$.  Therefore,
 \begin{align*}
  E(\Bl-\eps)&\le E(\Bl)-N\eps +\frac{1}{\Bl^2}\sum_i \phi_i |\BXi|^2 (\beps-\eps) \frac{\beps^2}{(\beps-\eps)^2} - \frac{1}{\Bl^2}\sum_i \phi_i |\BXi|^2 \beps\\
  &= E(\Bl)-\lt(N-\frac{1}{\Bl^2}\sum_i \phi_i |\BXi|^2\rt) \eps +o(\eps^2).
 \end{align*}
Hence, 
\[\frac{E(\Bl-\eps)-E(\Bl)}{-\eps}\ge N-\frac{1}{\Bl^2}\sum_i \phi_i |\BXi|^2 +o(\eps).\]
Using \eqref{estim1} and letting $\eps\to 0$, we get
\[N-\frac{1}{\Bl^2}\sum_i \phi_i |\BXi|^2\le 1.\]
We similarly define a competitor for $E(\Bl+\eps)$ by letting $Y_i(t):=X_i(t-\beps)$ in $[\eps+\beps,\Bl+\eps]$ and $Y_i(t):=\frac{1}{\Bl} \frac{\beps}{\eps+\beps} t \BXi$ in $[0, \eps+\beps]$ and get
 \begin{align*}
  E(\Bl+\eps)&\le E(\Bl)+N\eps +\frac{1}{\Bl^2}\sum_i \phi_i |\BXi|^2 (\beps+\eps) \frac{\beps^2}{(\beps+\eps)^2} - \frac{1}{\Bl^2}\sum_i \phi_i |\BXi|^2 \beps\\
  &= E(\Bl)+\lt(N-\frac{1}{\Bl^2}\sum_i \phi_i |\BXi|^2\rt) \eps +o(\eps^2).
 \end{align*}
From this we infer that
\[\frac{E(\Bl+\eps)-E(\Bl)}{\eps}\le N-\frac{1}{\Bl^2}\sum_i \phi_i |\BXi|^2 +o(\eps).\]
By definition of $\Bl$ and by continuity of $E$, 
\[\frac{E(\Bl+\eps)-E(\Bl)}{\eps}=1\]
and therefore 
\[N-\frac{1}{\Bl^2}\sum_i \phi_i |\BXi|^2\ge 1,\]
which conclude the proof of \eqref{mainestimbis}.
\end{proof}

\begin{remark}
 Notice that \eqref{mainestim} is compatible with $N=2$, $\phi_1=\phi_2=1/2$ and $\Bl=\frac{1}{4}$.
\end{remark}

Using the characterization \eqref{recursive}, we  show another characterization of $E(\Bl)$ which has the advantage of not being recursive anymore.

\begin{proposition}
 There holds
 \begin{equation}\label{alternativeq}
  \frac{E(\Bl)-\Bl}{\Bl}=\min_{\sum_{i=1}^N\phi_i=1} \frac{(N-1)+\frac{1}{12\Bl^2} \lt(1-\sum_{i=1}^N \phi_i^{3}\rt)}{1-\sum_{i=1}^N \phi_i^{3/2}}.
 \end{equation}
Moreover, if $\phi_1,..,\phi_N$ are minimizers for $E(\Bl)$ then they also minimize the right-hand side of \eqref{alternativeq} and vice-versa.
 \end{proposition}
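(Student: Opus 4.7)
The plan is to apply the recursive characterization \eqref{recursive} at $T=\Bl$ and use the defining property of $\Bl$ to eliminate the implicit appearance of $E$ on the right-hand side, thereby reducing the problem to a purely algebraic identity.

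Fix any $(\phi_i)_{i=1}^N$ with $\phi_i>0$, $\sum_{i=1}^N \phi_i=1$, and $N\ge 2$. Since each $\phi_i<1$, one has $\Bl\phi_i^{-3/2}>\Bl$, so the defining property of $\Bl$ (namely equality in \eqref{estim1} for $T\ge \Bl$) gives
$$E(\Bl\phi_i^{-3/2})=E(\Bl)+\Bl(\phi_i^{-3/2}-1).$$
Using \eqref{recursive} as an upper bound for $E(\Bl)$ and the identity $\sum_i\phi_i^{3/2}(\phi_i^{-3/2}-1)=N-\sum_i\phi_i^{3/2}$, we obtain
$$E(\Bl)\le E(\Bl)\sum_i\phi_i^{3/2}+\Bl\bigl(N-\sum_i\phi_i^{3/2}\bigr)+\frac{1}{12\Bl}\bigl(1-\sum_i\phi_i^3\bigr).$$
Splitting $N-\sum_i\phi_i^{3/2}=(N-1)+(1-\sum_i\phi_i^{3/2})$ and rearranging yields
$$(E(\Bl)-\Bl)\bigl(1-\sum_i\phi_i^{3/2}\bigr)\le \Bl(N-1)+\frac{1}{12\Bl}\bigl(1-\sum_i\phi_i^3\bigr).$$

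Since $N\ge 2$ and $\phi_i<1$ for all $i$, the factor $1-\sum_i\phi_i^{3/2}$ is strictly positive, so dividing by $\Bl(1-\sum_i\phi_i^{3/2})$ produces the $\le$ direction of \eqref{alternativeq}. Because only the step from \eqref{recursive} to the displayed inequality is non-equality, equality holds throughout this chain if and only if $(\phi_i)$ saturates \eqref{recursive} at $T=\Bl$. By the remark following the definition of $\Bl$, every minimizer of \eqref{recursive} at $T=\Bl$ satisfies $N\ge 2$, and such a minimizer indeed exists: it is supplied by the tuple of fluxes of the branches emanating from time $0$ in any $E(\Bl)$-minimizer (whose existence is granted by Proposition \ref{existmu}). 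Consequently, the infimum in \eqref{alternativeq} is attained, equals $(E(\Bl)-\Bl)/\Bl$, and its minimizers coincide exactly with those of \eqref{recursive} at $T=\Bl$.

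The only mild subtlety is the strict positivity $1-\sum_i\phi_i^{3/2}>0$ needed both to divide and to obtain the correspondence of minimizers, which is immediate from $N\ge 2$. Everything else is algebraic bookkeeping of when the inequality coming from \eqref{recursive} is tight.
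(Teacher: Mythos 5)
Your proof is correct and takes essentially the same route as the paper: substitute $E(\Bl\phi_i^{-3/2})=E(\Bl)+\Bl(\phi_i^{-3/2}-1)$ into \eqref{recursive} at $T=\Bl$, rearrange, and divide by $\Bl(1-\sum_i\phi_i^{3/2})$, with equality precisely at the minimizers of \eqref{recursive}. The extra care you take with the positivity of the denominator and with the existence of a minimizer having $N\ge 2$ branches (via the remark after the definition of $\Bl$) is a slight tightening of the paper's presentation but not a different argument.
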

\begin{proof}
Let $\bphi_i$ be the optimal fluxes for \eqref{recursive}. By definition of $\Bl$, we have for every $\phi_i$ with $\sum_i \phi_i=1$ (since $\phi_i^{-3/2}\ge1$)
 \begin{align*}
  E(\Bl)&\le\sum_{i=1}^N \phi_i^{3/2} E(\Bl \phi_i^{-3/2})+ \frac{1}{12\Bl} \lt(1-\sum_{i=1}^N \phi_i^{3}\rt)\\
  &=\sum_{i=1}^N \phi_i^{3/2} (E(\Bl) + (\Bl \phi_i^{-3/2}-\Bl))	 +\frac{1}{12\Bl} \lt(1-\sum_{i=1}^N \phi_i^{3}\rt)\\
  &= E(\Bl) \lt(\sum_{i=1}^N \phi_i^{3/2}\rt) +\Bl \sum_{i=1}^N (1-\phi_i^{3/2}) +\frac{1}{12\Bl} \lt(1-\sum_{i=1}^N \phi_i^{3}\rt).
 \end{align*}
Therefore
\[E(\Bl)-\Bl\le (E(\Bl)-\Bl) \lt(\sum_{i=1}^N \phi_i^{3/2}\rt)+(N-1)\Bl+\frac{1}{12\Bl} \lt(1-\sum_{i=1}^N \phi_i^{3}\rt),\]
and then
\[\frac{E(\Bl)-\Bl}{\Bl}\le\frac{(N-1)+\frac{1}{12\Bl^2} \lt(1-\sum_{i=1}^N \phi_i^{3}\rt)}{1-\sum_{i=1}^N \phi_i^{3/2}}\]
with equality for $\phi_i=\bphi_i$.  
\end{proof}

For $N\ge 2$, we introduce a quantity which will play a central role in our analysis. Let
\[\alpha_N:=\inf_{\phi_i\ge 0}\lt\{ \frac{1-\sum_{i=1}^N \phi_i^3}{1-\sum_{i=1}^N \phi_i^{3/2}} \ : \ \sum_{i=1}^N \phi_i=1\rt\}.\]
We now prove that $T_*\le 1/4$ and that a lower bound on $\alpha_N$ gives an upper bound on $N$.
% Notice that for every $\l$
% \begin{equation}\label{uniflowerbound} E(\l)-\l\ge (N-1)\l +\frac{W_2^2(\delta_0,\chi_{[-1/2,1/2]})}{\l}=(N-1)\l +\frac{1}{12\l}.\end{equation}

\begin{proposition}\label{estimTstar}
 There holds 
 \[\Bl\le \frac{1}{4}.\]
 Moreover, the number $N>1$ of branches of the minimizer for $\Bl$, satisfies
 \begin{equation}\label{estimcentral1}
  \sqrt{N}(\sqrt{N}+1) \Bl+\frac{\alpha_N}{12 \Bl}\le \frac{\sqrt{2}}{2(\sqrt{2}-1)}.
 \end{equation}
As a consequence,
\begin{equation}\label{criterionalphaN}
\sqrt{N}\le \frac{1}{2} \lt(-1+\lt(1+\frac{6}{\alpha_N(\sqrt{2}-1)^2}\rt)^{1/2}\rt). 
\end{equation}
In particular, since $\alpha_N\ge 1$, this gives $N\le 6$.
 
 \end{proposition}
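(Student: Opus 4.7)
My plan is to first establish $T_* \le 1/4$ directly from the equipartition identity \eqref{mainestim}, then to derive \eqref{estimcentral1} by matching a lower bound on $E(T_*) - T_*$ against the dyadic upper bound, and finally to deduce \eqref{criterionalphaN} by AM-GM.

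For the bound $T_* \le 1/4$, note that by definition of $T_*$ the minimizer at $T_*$ must branch at time zero, so $N \ge 2$. Writing the optimal fluxes as $\phi_1^\star, \ldots, \phi_N^\star$, identity \eqref{mainestim} reads $12 T_*^2 (N-1) = 1 - \sum_i (\phi_i^\star)^3$. Combined with the Jensen bound $\sum_i (\phi_i^\star)^3 \ge 1/N^2$ (using $\sum \phi_i^\star = 1$), this yields $T_*^2 \le (N+1)/(12 N^2)$, which is a decreasing function of $N$ for $N \ge 2$ and equals $1/16$ at $N = 2$. Hence $T_* \le 1/4$.

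To prove \eqref{estimcentral1}, I combine a lower and an upper bound on $E(T_*) - T_*$. For the lower bound, I apply \eqref{alternativeq} at the optimal partition (where the minimum is attained), invoke the definition of $\alpha_N$ to get $1 - \sum (\phi_i^\star)^3 \ge \alpha_N (1 - \sum (\phi_i^\star)^{3/2})$, and use Jensen on $x \mapsto x^{3/2}$ to get $1 - \sum (\phi_i^\star)^{3/2} \le 1 - N^{-1/2}$. After multiplication by $T_*$ and the simplification $(N-1)/(1-N^{-1/2}) = \sqrt{N}(\sqrt{N}+1)$, this gives
\[E(T_*) - T_* \ge \sqrt{N}(\sqrt{N}+1)\, T_* + \frac{\alpha_N}{12 T_*}.\]
For the upper bound, the just-established $T_* \le 1/4$ combined with the definition of $T_*$ yields $E(T_*) - T_* = E(1/4) - 1/4$, while the dyadic construction $\mu^\star$ supplies $E(1/4) \le \E(\mu^\star) = \frac{1}{2-\sqrt{2}} + \frac{1}{4}$, computed by iterating the $N = 2$, $\phi_1 = \phi_2 = 1/2$ case of \eqref{recursive} as a geometric fixed-point relation. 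Therefore $E(T_*) - T_* \le \frac{1}{2-\sqrt{2}} = \frac{\sqrt{2}}{2(\sqrt{2}-1)}$, and inserting this yields \eqref{estimcentral1}.

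Finally, AM-GM on the left-hand side of \eqref{estimcentral1} gives $2 \sqrt{\sqrt{N}(\sqrt{N}+1)\, \alpha_N / 12} \le \sqrt{2}/(2(\sqrt{2}-1))$; squaring and rearranging produces $\sqrt{N}(\sqrt{N}+1) \le 3/(2\alpha_N(\sqrt{2}-1)^2)$, and the quadratic formula applied to $y := \sqrt{N}$ yields \eqref{criterionalphaN}. The bound $N \le 6$ then follows from $\alpha_N \ge 1$, which is immediate from $\phi_i^3 \le \phi_i^{3/2}$ for $\phi_i \in [0,1]$: using $(\sqrt{2}-1)^2 = 3 - 2\sqrt{2}$ and $6/(3-2\sqrt{2}) = 18 + 12\sqrt{2}$, one obtains $\sqrt{N} \le (-1 + \sqrt{19 + 12\sqrt{2}})/2 < 5/2$ since $19 + 12\sqrt{2} < 36$, so $N < 25/4$. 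The delicate point is the coupling in the second step: the sharp upper bound on $E(T_*) - T_*$ requires $T_* \le 1/4$ in order to invoke the identity $E(1/4) - 1/4 = E(T_*) - T_*$, which is why the bound $T_* \le 1/4$ must be handled independently first via \eqref{mainestim}.
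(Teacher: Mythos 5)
Your proof is correct, and the overall architecture (lower bound on $E(T_*)-T_*$ from \eqref{alternativeq}, upper bound from the dyadic competitor at $T=1/4$, then a quadratic-in-$T_*$ argument to extract a bound on $N$) matches the paper's. The one place you take a genuinely different route is the bound $T_*\le 1/4$: the paper deduces it by combining the lower bound $\frac{E(T_*)-T_*}{T_*}\ge\frac{2\sqrt{2}}{\sqrt{2}-1}$ (from \eqref{alternativeq}, \eqref{mainestim} and the convexity bound \eqref{convex32}) with the dyadic upper bound $\frac{E(\l)-\l}{\l}\le\frac{\sqrt{2}}{\sqrt{2}-1}(1+\frac{1}{16\l^2})$, whereas you read it directly off the equipartition identity: $12T_*^2(N-1)=1-\sum_i\phi_i^3\le 1-N^{-2}$ by Jensen on $x\mapsto x^3$, so $T_*^2\le (N+1)/(12N^2)\le 1/16$. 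Your version is shorter, purely algebraic, and does not need the competitor at that stage; the competitor is then only invoked once, for \eqref{estimcentral1}. Both are valid. Your AM-GM step for \eqref{criterionalphaN} is the same as the paper's nonnegative-discriminant argument in disguise (the inequality $aT^2-bT+c\le 0$ with $a,c>0$ forces $b\ge 2\sqrt{ac}$), so no real difference there. One small remark: you correctly identify the coupling — $E(T_*)-T_*=E(1/4)-1/4$ requires $T_*\le 1/4$ first — as the only logical dependency that forces the two steps to be done in this order, and your Step 1 makes that dependency clean because it does not itself use the dyadic bound.
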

\begin{proof}
We start with the upper bound $\Bl\le \frac{1}{4}$. Let $\phi_1,..,\phi_N$ be optimal for $E(\Bl)$. Then, by \eqref{alternativeq} and \eqref{mainestim},
 \[\frac{E(\Bl)-\Bl}{\Bl}=\frac{2(N-1)}{\lt(1-\sum_i \phi_i^{3/2}\rt)}.\]
 Using that for every $\phi_1,.., \phi_N$ with $\sum \phi_i=1$,
 \begin{equation}\label{convex32}\frac{1}{1-\sum_i \phi_i^{3/2}}\ge \frac{1}{1-\sqrt{N}^{-1}}\end{equation}
 we get
 \[\frac{E(\Bl)-\Bl}{\Bl}\ge\frac{2(N-1)\sqrt{N}}{\sqrt{N}-1}.\]
 Since the right-hand side is minimized for $N=2$ (among $N\in\N$, $N\ge 2$), we have
 \begin{equation}\label{lowerbound}
  \frac{E(\Bl)-\Bl}{\Bl}\ge \frac{2\sqrt{2}}{\sqrt{2}-1}.
 \end{equation}
 We proceed further by proving an upper bound for the left-hand side of \eqref{lowerbound}. For every $\l>0$, we can construct the self similar competitor for which every branch is
 divided into two branches of half the mass at every branching point (which is at $\l_k=\l(1-(\frac{1}{2})^{3k/2})$. Let $\tilde{E}(\l)$ be its energy. Then,
arguing as in \eqref{recursive}, we have
\[\tilde{E}(\l)= 2 \lt(\frac{1}{2}\rt)^{3/2}\tilde{E}(\l)+2\lt(\l-\lt(\frac{1}{2}\rt)^{3/2} \l\rt)+\frac{1}{16 \l}\]
that is 
\[(\tilde{E}(\l)-\l)\lt(1-\frac{1}{\sqrt{2}}\rt)= \l +\frac{1}{16 \l}\]
from which we get
\[\frac{\tilde{E}(\l)-\l}{\l}=  \frac{\sqrt{2}}{\sqrt{2}-1}\lt(1+\frac{1}{16 \l^2}\rt).\]
Since by definition $E(\l)\le \tilde{E}(\l)$, we get 
\begin{equation}\label{upperboundE}\frac{E(\l)-\l}{\l}\le  \frac{\sqrt{2}}{\sqrt{2}-1}\lt(1+\frac{1}{16 \l^2}\rt).\end{equation}
For $\l> \frac{1}{4}$, the right-hand side is strictly smaller than $\frac{2\sqrt{2}}{\sqrt{2}-1}$ hence by \eqref{lowerbound}, we cannot have $\l=\Bl$. This gives the upper bound.\\
 
 We now turn to \eqref{estimcentral1}. For this, we notice that
 \begin{equation}\label{estimcentral2}
  \frac{E(\Bl)-\Bl}{\Bl}=  \frac{N-1}{1-\sum_i \phi_i^{3/2}}+\frac{(1- \sum_i \phi_i^3)}{12\Bl^2(1-\sum_i \phi_i^{3/2})}\ge \frac{N-1}{1-N^{-1/2}}+\frac{\alpha_N}{12 \Bl^2}.
 \end{equation}
Since $\Bl\le 1/4$, by definition of $\Bl$ (recall \eqref{Tstar}), $E(1/4)-1/4=E(\Bl)-\Bl$, combining \eqref{estimcentral2} with \eqref{upperboundE} for $T=1/4$ and 
\[
 \frac{N-1}{1-N^{-1/2}}=\sqrt{N}(\sqrt{N}+1),
\]
 yields \eqref{estimcentral1}.
We finally derive \eqref{criterionalphaN}. For this, multiply \eqref{estimcentral1}  by $\Bl$ to obtain that 
\[ 
\sqrt{N}(\sqrt{N}+1) \Bl^2-\frac{\sqrt{2}}{2(\sqrt{2}-1)} \Bl +\frac{\alpha_N}{12}\le 0. 
\]
This implies that the polynomial $\sqrt{N}(\sqrt{N}+1) X^2-\frac{\sqrt{2}}{2(\sqrt{2}-1)} X +\frac{\alpha_N}{12}$ has real roots (and that $\Bl$ lies between these two roots) so that 
\[\Delta=\frac{1}{2(\sqrt{2}-1)^2}-\frac{\alpha_N}{3}\sqrt{N}(\sqrt{N}+1)\ge 0,\]
which is equivalent to 
\[(\sqrt{N})^2+\sqrt{N}-\frac{3}{2\alpha_N(\sqrt{2}-1)^2}\le 0.\]
Since the largest root of this polynomial (in the variable $\sqrt{N}$) is given by $\frac{1}{2} (-1+(1+\frac{6}{\alpha_N(\sqrt{2}-1)^2})^{1/2})$, we have obtained \eqref{criterionalphaN}.
\end{proof}

\begin{remark}
 From the proof of the previous proposition, one could also get a lower bound for $\Bl$. We will make use of this fact later on to study the case $N=2$.
\end{remark}

Estimate \eqref{criterionalphaN} shows that by obtaining a good lower bound on $\alpha_N$, we may exclude that $N\ge 3$. This is the purpose of the next lemma whose proof is essentially postponed to Appendix \ref{appendix}.
\begin{lemma}\label{lem:Nge3}
 For $3\le N\le 6$,
 \begin{equation}\label{criterionalphaNbis}
 \alpha_N>\frac{6}{(\sqrt{2}-1)^2 (2\sqrt{N}+1)^2}.
\end{equation}
As a consequence, the number of branches at $T=\Bl$  equals two.
\end{lemma}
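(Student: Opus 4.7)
The lemma has two parts: the lower bound \eqref{criterionalphaNbis} on $\alpha_N$ for $3\le N\le 6$, and the consequence $N=2$. The second part is immediate from Proposition \ref{estimTstar}: feeding \eqref{criterionalphaNbis} into \eqref{criterionalphaN} rules out $\sqrt{N}\ge\sqrt{3}$, and since we already know $2\le N\le 6$ (the lower bound $N\ge 2$ coming from the remark just after the definition \eqref{Tstar} of $T_*$), we conclude $N=2$. So the core task is to establish \eqref{criterionalphaNbis}.

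The plan is to treat $\alpha_N$ as a minimization problem of the scale-invariant ratio $R(\phi)=(1-\sum_i\phi_i^3)/(1-\sum_i\phi_i^{3/2})$ on the compact simplex $\Delta_N=\{\phi\ge 0:\sum_i\phi_i=1\}$. The denominator vanishes only at the vertices $e_k$ of $\Delta_N$, where the numerator also vanishes; a first-order expansion at $e_k$ gives numerator $\sim 3\sum_{j\neq k}\phi_j$ and denominator $\sim\frac{3}{2}\sum_{j\neq k}\phi_j$, so $R$ extends continuously to $\Delta_N$ with the value $2$ at every vertex. In particular $\alpha_N$ is attained, and the upper bound $\alpha_N\le 2$ is automatic. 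For the lower bound one must look at interior critical points and at lower-dimensional strata $\{\phi_j=0\}$, on which the problem reduces by symmetry to $\alpha_{N'}$ for $N'<N$; this naturally suggests an inductive / recursive reduction alongside a direct check on the full $N$-simplex.

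To make the lower bound rigorous, I would proceed in two steps: (i) write the Lagrange system $3\phi_j^2-\tfrac{3}{2}R\,\phi_j^{1/2}=\mu$ on the support of $\phi$, which shows that at any interior critical point the $\phi_j$ can take at most two distinct positive values, and exploit this combinatorial constraint to parametrise the critical set by a handful of $2$-dimensional families indexed by the multiplicities of the two levels; (ii) verify numerically, with interval-arithmetic enclosures, that on each such family (and on each lower-dimensional face obtained by setting some $\phi_j=0$) the value of $R$ strictly exceeds the explicit threshold $6/[(\sqrt{2}-1)^2(2\sqrt{N}+1)^2]$. Since $N-1\le 5$, a rigorous subdivision of $\Delta_N$ into finitely many small boxes together with enclosures of $\sum\phi_i^3$ and $\sum\phi_i^{3/2}$ is manageable, and the margin between the true infimum (close to $2$ from the vertex analysis) and the threshold is substantial enough to allow a coarse grid; the detailed numerical verification is relegated to Appendix \ref{appendix}.

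The main obstacle is that $R$ is neither convex nor obviously unimodal, and the critical-point equation is transcendental, so one cannot exhibit the minimizer in closed form. The step that requires genuine work is therefore the rigorous global lower bound on $R$ over $\Delta_N$: either the case analysis of the two-level Lagrange families must be exhaustive (keeping track of which multiplicity profiles can actually achieve the infimum, and inductively handling boundary faces that land one in the $\alpha_{N-1}$ problem), or the interval-arithmetic subdivision must be refined enough near the vertices (where the ratio approaches $2$ from above and where both numerator and denominator are small) to prevent the crude enclosures from dropping below the threshold.
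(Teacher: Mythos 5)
Your approach matches the paper's: reduce via the Lagrange condition to at most two distinct positive masses at any interior critical point, handle boundary faces by recursion to $\alpha_{N-1}$ (with $\alpha_2=2$), observe $R\to 2$ at the simplex vertices, and finish with a rigorous finite numerical check, then feed the bound into \eqref{criterionalphaN} to conclude $N=2$. The paper additionally uses a second-order perturbation argument to pin the multiplicity profile to $N-1$ copies of a small mass plus one large one (so each $N$ collapses to a single one-dimensional scan over $\phi\in[0,1/N]$), and closes not with interval arithmetic but with an explicit Lipschitz bound on $f_N'$ away from $0$ combined with a monotonicity argument near $\phi=0$ — technical refinements of the same idea rather than a different route.
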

\begin{proof}

By inverting the relation between $\alpha_N$ and $N$ in \eqref{criterionalphaN}, it is readily seen that \eqref{criterionalphaNbis} and \eqref{criterionalphaN} are incompatible. 
Therefore, proving the lower bound \eqref{criterionalphaNbis} directly excludes the possibility of having $N>2$ branches. Since the proof of \eqref{criterionalphaNbis} 
is basically based on a reduction of the problem defining $\alpha_N$ to a union of one dimensional optimization problems which can be solved by a computer assisted proof, we postpone it to Appendix \ref{appendix}.  
\end{proof}

We now conclude the proof of Proposition \ref{propTstar} by studying the case $N=2$. \\
 
 Let first
\[T_-:=\frac{1}{4}(1-(1-\frac{4}{3}(2-\sqrt{2})^{1/2}).
\]
Then $1/4\ge \Bl\ge T_-$. Indeed, \eqref{estimcentral1} for $N=2$ (recalling that $\alpha_2=2$) may be seen to be equivalent to 
\begin{equation}\label{quadraticT}\Bl^2-\frac{1}{2}\Bl+\frac{1}{6\sqrt{2}(\sqrt{2}+1)}\le 0.\end{equation}
In particular, $\Bl$ has to lie between the two roots of the right-hand side of \eqref{quadraticT} which yields $\Bl\ge T_-$ (recall that the bound $1/4\ge \Bl$ was already derived in Proposition \ref{estimTstar}).\\

  Now, if $\Bl\le 1/4$ then $E(1/4)=E(\Bl)+1/4-\Bl$ so that using \eqref{upperboundE} for $T=1/4$, we get
  \[
   E(\Bl)-\Bl=E(1/4)-1/4\le \frac{\sqrt{2}}{2(\sqrt{2}-1)}.
  \]
 Using \eqref{alternativeq}, we obtain  for $\phi$ optimal for $\Bl$, the bound 
\begin{equation}\label{boundalmostend}
 \Bl \frac{1+\frac{1}{4\Bl^2} \phi(1-\phi)}{1-\phi^{3/2}-(1-\phi^{3/2})}\le  \frac{\sqrt{2}}{2(\sqrt{2}-1)}.
\end{equation}
The next proposition shows the reverse inequality which concludes the proof of Proposition~\ref{propTstar}.

\begin{proposition}\label{prop:Neq2}
 For $T\in [T_-,1/4]$ and $\phi\in [0,1]$,
 \begin{equation}\label{tofinishtheproof}
  T \frac{1+\frac{1}{4T^2}\phi(1-\phi)}{1-\phi^{3/2}-(1-\phi)^{3/2}}\ge \frac{\sqrt{2}}{2(\sqrt{2}-1)},
 \end{equation}
 with equality if and only if $\Bl=\frac{1}{4}$ and $\phi=1/2$.
\end{proposition}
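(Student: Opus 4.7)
The plan is to reduce \eqref{tofinishtheproof} to a chain of two elementary one-variable estimates, neither of which uses the specific value of $T_-$. After multiplying through by the nonnegative quantity $1 - \phi^{3/2} - (1-\phi)^{3/2}$ (which is strictly positive on $(0,1)$ and zero at $\phi \in \{0,1\}$, where the original inequality is vacuously true as its left-hand side blows up), the claim becomes
\[
T + \frac{\phi(1-\phi)}{4T} \ge c\left(1 - \phi^{3/2} - (1-\phi)^{3/2}\right), \qquad c := \frac{\sqrt{2}}{2(\sqrt{2}-1)} = 1 + \frac{\sqrt{2}}{2}.
\]

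The first reduction is AM--GM on the left-hand side, giving $T + \frac{\phi(1-\phi)}{4T} \ge \sqrt{\phi(1-\phi)}$, with equality iff $T = \frac{1}{2}\sqrt{\phi(1-\phi)}$. Since $\phi(1-\phi) \le 1/4$, this critical $T$ always lies in $[0,1/4]$, so the minimization in $T$ is essentially unconstrained. Hence it is enough to establish the stronger $T$-independent inequality
\[
\sqrt{\phi(1-\phi)} \ge c\left(1 - \phi^{3/2} - (1-\phi)^{3/2}\right) \quad \text{for every } \phi \in [0,1],
\]
with equality precisely at $\phi \in \{0, 1/2, 1\}$.

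For this second reduction I would set $u := \sqrt{\phi}$, $v := \sqrt{1-\phi}$, so that $u^2+v^2 = 1$, and introduce $\pi := uv = \sqrt{\phi(1-\phi)} \in [0, 1/2]$. Then $\phi^{3/2}+(1-\phi)^{3/2} = u^3+v^3 = (u+v)(u^2-uv+v^2) = (1-\pi)\sqrt{1+2\pi}$, so the target reads $f(\pi) \ge 1$ on $[0,1/2]$, where
\[
f(\pi) := \frac{\pi}{c} + (1-\pi)\sqrt{1+2\pi}.
\]
A direct check using $1/c = 2-\sqrt{2}$ gives $f(0)=f(1/2)=1$, and differentiating twice yields
\[
f''(\pi) = -\frac{3(1+\pi)}{(1+2\pi)^{3/2}} < 0,
\]
so $f$ is strictly concave on $[0,1/2]$. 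A strictly concave function that agrees at both endpoints of an interval lies strictly above that common value in the interior, which delivers both the inequality and the equality case $\pi \in \{0,1/2\}$.

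To conclude, I would chain the two estimates: equality in AM--GM forces $T = \frac{1}{2}\sqrt{\phi(1-\phi)}$, while equality in the concavity step forces $\phi \in \{0, 1/2, 1\}$. The boundary cases $\phi \in \{0,1\}$ require $T = 0$, which is excluded by $T \ge T_- > 0$, so necessarily $\phi = 1/2$ and then $T = 1/4$. The main difficulty I anticipate is the second reduction: spotting that the change of variables $\pi = uv$ on the unit circle $u^2 + v^2 = 1$ collapses the rigid-looking combination of $3/2$-powers into a clean concave function of a single variable on $[0,1/2]$ with equal endpoint values. Once that is found, the remainder of the argument is routine calculus, and notably the specific lower bound $T \ge T_-$ never enters beyond ruling out $T=0$ in the equality analysis.
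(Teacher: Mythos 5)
Your proof is correct, and it takes a genuinely different and significantly more elegant route than the paper's. The paper sets $\lambda=2T$, then $a=\frac{3\sqrt 2\lambda}{2(\sqrt2-1)}$, and for each fixed $a$ minimizes $L(a,\phi)-R(a,\phi)$ over $\phi$: it reduces the sign of $\partial_\phi(L-R)$ to the sign of the quartic $P(X)=2(X^2-\tfrac12)(2X^2-2aX+a^2-1)$ in $X=\sqrt\phi$, splits into the cases $a\ge\sqrt2$ (where the minimum sits at $\phi=1/2$) and $a\in[a_-,\sqrt2]$ (where the interior minimizer $X_-^2$ must be tracked and the function $\Psi(a)=\min_\phi(L-R)$ shown decreasing), and the final monotonicity step relies on a numerical inequality that uses the specific value $a_-$, i.e.\ the lower bound $T\ge T_-$. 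The paper even remarks that it could not extend the monotonicity of $\Psi$ to $[1,\sqrt2]$ and that a more elegant proof was desired.

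Your two-step argument delivers exactly that. First, AM--GM gives $T+\frac{\phi(1-\phi)}{4T}\ge\sqrt{\phi(1-\phi)}$ for every $T>0$, eliminating $T$ entirely. Second, the substitution $u=\sqrt\phi$, $v=\sqrt{1-\phi}$, $\pi=uv$ with $u^2+v^2=1$ rewrites $\phi^{3/2}+(1-\phi)^{3/2}=(1-\pi)\sqrt{1+2\pi}$, and the target becomes $f(\pi):=\frac{\pi}{c}+(1-\pi)\sqrt{1+2\pi}\ge1$ on $[0,1/2]$. Your computations check out: $1/c=2-\sqrt2$ gives $f(0)=f(1/2)=1$, and $f''(\pi)=-\frac{3(1+\pi)}{(1+2\pi)^{3/2}}<0$, so strict concavity forces $f>1$ on $(0,1/2)$. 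The equality chain is also handled correctly: equality requires $T=\tfrac12\sqrt{\phi(1-\phi)}$ from AM--GM and $\pi\in\{0,1/2\}$ from concavity; $\pi=0$ (i.e.\ $\phi\in\{0,1\}$) is ruled out since the left-hand side of the original inequality diverges there (or, as you say, it would force $T=0$), leaving $\phi=1/2$, $T=1/4$. Two small remarks. The sentence that the minimization in $T$ is ``essentially unconstrained'' is imprecise --- the AM--GM critical $T=\tfrac12\sqrt{\phi(1-\phi)}$ can fall below $T_-$ for $\phi$ near the endpoints --- but this is harmless, since the AM--GM inequality is valid for every $T>0$ regardless of where it is tight. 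And your argument actually proves the estimate for \emph{all} $T>0$, not merely $T\in[T_-,1/4]$; in particular the hypothesis $T\ge T_-$ is not needed for the inequality, only to pin down the equality case.
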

 \begin{proof}
  By symmetry we may assume that $\phi\in[0,1/2]$. Letting $\lambda:= 2T$, \eqref{tofinishtheproof} is equivalent to show that for $\lambda\in [2T_-,1/2]$ and $\phi\in [0,1/2]$,
\begin{equation}\label{finishreduced}
 1+\frac{1}{\lambda^2}\phi(1-\phi)\ge \frac{\sqrt{2}}{\lambda(\sqrt{2}-1)} (1-\phi^{3/2}-(1-\phi)^{3/2}).
\end{equation}
It will be more convenient to work with $a:= \frac{3\sqrt{2}\lambda}{2(\sqrt{2}-1)}$. Letting 
\[a_-:=\frac{3\sqrt{2}}{\sqrt{2}-1}T_-=\frac{3}{2(2-\sqrt{2})}(1-(1-\frac{4}{3}(2-\sqrt{2}))^{1/2})\simeq 1.36 ,\] we are reduced to $a \in[a_-,\frac{3\sqrt{2}}{4(\sqrt{2}-1)}]$. Inequality \eqref{finishreduced} then reads
\begin{equation}\label{finishreduced2}
 L(a,\phi):=1+\frac{9}{2a^2(\sqrt{2}-1)^2}\phi(1-\phi)\ge \frac{3}{a(\sqrt{2}-1)^2} (1-\phi^{3/2}-(1-\phi)^{3/2})=:R(a,\phi).
\end{equation}
Let us first notice that $L(a,0)=1>0=R(a,0)$ and that for $\phi=\frac{1}{2}$, \eqref{finishreduced2} reads,
\[1+\frac{9}{8a^2(\sqrt{2}-1)^2}\ge \frac{3}{a\sqrt{2}(\sqrt{2}-1)},\]
which always holds true (in terms of $\lambda$, this amounts to $1+\frac{1}{4 \lambda^2}\ge \frac{1}{\lambda}$). Moreover, the inequality above is strict if $a<\frac{3\sqrt{2}}{4(\sqrt{2}-1)}$. We are going to study the variations (for fixed $a$) of $L(a,\phi)-R(a,\phi)$.
By differentiating, this is equivalent to study the sign of  
\begin{equation}\label{finishderived}
 D(\phi):= 1-2\phi- a( (1-\phi)^{1/2}-\phi^{1/2}). 
\end{equation}
Let  $X:=\phi^{1/2}$. For $a\in [a_-,\frac{3\sqrt{2}}{4(\sqrt{2}-1)}]$ and  $X\in[0,1/\sqrt{2}]$, since $1-2X^2+aX\ge 0$,
the sign of \eqref{finishderived} is the same as the sign of 

\[P(X):=(1-2X^2+aX)^2- a^2(1-X^2)=4X^4 -4a X^3+2(a^2-2)X^2+2aX+(1-a^2).\]

Since $P$ has roots $\{\pm 1/\sqrt{2}\}$, we can factor it to obtain
\[P(X)= 2(X^2-\frac{1}{2})(2X^2-2aX +(a^2-1)).\] 
For $a>\sqrt{2}$, $2X^2-2aX +(a^2-1)$ has no real roots and therefore, for $a\in[\sqrt{2},   \frac{3\sqrt{2}}{4(\sqrt{2}-1)}]$, $P$ is negative inside $[0,1/\sqrt{2}]$ and thus $\partial_\phi L- \partial_\phi R\le 0$ implying that 
\begin{equation}\label{biggersqt}\min_{\phi\in[0,1/2]} L(a,\phi)-R(a,\phi)=L(a,1/2)-R(a,1/2)\ge 0,\end{equation}
 with strict inequality if $a< \frac{3\sqrt{2}}{4(\sqrt{2}-1)}$ or $\phi\neq 1/2$. This proves \eqref{finishreduced} for $a\in[\sqrt{2},   \frac{3\sqrt{2}}{4(\sqrt{2}-1)}]$. If now $a\in [a_-,\sqrt{2}]$, besides $\pm 1/\sqrt{2}$, $P$ has two more roots
\begin{equation}\label{rootsP}X_\pm:=\frac{a\pm\sqrt{2-a^2}}{2}.\end{equation}
For $a\in[a_-,\sqrt{2}]$,
\[0\le X_-\le 1/\sqrt{2}\le X_+,\]
 and thus  $P$ is negative in $[0,X_-]$ and positive in $[X_-,1/\sqrt{2}]$ from which,
\begin{equation}\label{Psi}\Psi(a):=\min_{\phi\in[0,1/2]} L(a,\phi)-R(a,\phi)=L(a,X_-^2)-R(a,X_-^2).\end{equation}
Let us now prove that for $a\in[a_-,\sqrt{2})$, $\Psi'(a)\le 0$. We first compute
\[\Psi'(a)=\partial_a L(a,X_-^2)-\partial_a R(a,X_-^2)+ 2X_-\partial_a X_-(\partial_\phi L(a,X_-^2)- \partial_\phi R(a,X_-^2)).\]
 By minimality of $X_-$,  $\partial_\phi L(a,X_-^2)- \partial_\phi R(a,X_-^2)=0$ so that 
 \[
 \Psi'(a)=\partial_a L(a,X_-^2)-\partial_a R(a,X_-^2)=\frac{3}{a^2(\sqrt{2}-1)^2}\lt(1-X_-^3-(1-X_-^2)^{3/2}-\frac{3}{a} X_-^2(1-X_-^2)\rt). 
 \]
A simple computation shows that $X_-^2=\frac{1}{2}(1-2a\sqrt{2-a^2})$ so that $\Psi'\le 0$ is equivalent to
\[\frac{1}{2\sqrt{2}}(1-2a\sqrt{2-a^2})^{3/2}+\frac{1}{2\sqrt{2}}(1+2a\sqrt{2-a^2})^{3/2}+\frac{3}{4a}(1-a^2(2-a^2))\ge 1.\]
This indeed holds since for $a\in[a_-,\sqrt{2}]$,
\begin{multline*}
 \frac{1}{2\sqrt{2}}(1-2a\sqrt{2-a^2})^{3/2}+\frac{1}{2\sqrt{2}}(1+2a\sqrt{2-a^2})^{3/2}+\frac{3}{4a}(1-a^2(2-a^2))\ge\\
 \frac{1}{2\sqrt{2}}( (1-2a_-\sqrt{2-a_-^2})^{3/2}+1)+\frac{3}{4\sqrt{2}}(1-a_-^2(2-a_-^2)) \simeq 1.02>1.
\end{multline*}
Therefore, $\Psi'\le 0$ and thus for $a\in[a_-,\sqrt{2}]$, by \eqref{biggersqt}
\[\Psi(a)\ge \Psi(\sqrt{2})> 0,\]
which ends the proof of \eqref{tofinishtheproof}.
 
 \end{proof}

 \begin{remark}
  From \eqref{quadraticT}, one could infer the simpler bound $\Bl\ge \frac{1}{3\sqrt{2}(\sqrt{2}+1)}$ which leads to $a\ge 1$. For $a\in[1,\sqrt{2}]$, we still have \eqref{rootsP} and \eqref{Psi}.
  Numerically, it seems that $\Psi$ is decreasing not only in $[a_-,\sqrt{2}]$ but actually on the whole $[1,\sqrt{2}]$. We were unfortunately not able to prove this fact which would have yield a more elegant proof of \eqref{tofinishtheproof}. 
 \end{remark}

\section{Applications and open problems}\label{sec:appli}
In this section we use Theorem \ref{main} to characterize the symmetric minimizers of 
\begin{equation}\label{symmproblem}\min\{ \E(\mu) \ : \ \mu_{\pm T}=\phi/L dx\restr [-L/2,L/2]\},\end{equation}
at least for $T$ large enough. By rescaling, it is enough to consider $\phi=L=1$.
\begin{theorem}\label{structureTlarge}
 For $T\in[\frac{1}{4},\frac{1}{4(2\sqrt{2}-2)})$, the unique symmetric minimizer of \eqref{symmproblem} is equal in $[0,T]$ to $S_{-1/4}(\mu^1)+S_{1/4}(\mu^1)$, where $\mu^1$  is equal to the unique minimizer of $E(T,1/2)$ given by Corollary \ref{cormain} in $[0,T]$ and to its symmetric in $[-T,0]$. 
 For $T> 4(2\sqrt{2}-2)$, it is given by the unique minimizer of $E(T)$ given by Theorem \ref{main} in $[0,T]$ and to its symmetric in $[-T,0]$. 
\end{theorem}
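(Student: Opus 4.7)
The plan is to reduce the two-sided problem to a finite optimization by exploiting the throat structure at $t=0$. Let $\mu$ be a symmetric minimizer (which exists by Proposition~\ref{reg}(iv)); the same proposition asserts that $\sharp\{i:\phi_i(t)\neq 0\}$ attains its minimum at $t=0$, so $\mu_0=\sum_{i=1}^N\phi_i\delta_{X_i}$ for some integer $N$, with $X_1<\cdots<X_N$ and $\sum_i\phi_i=1$. By monotonicity of traces (Proposition~\ref{reg}(ii)), the forward subsystem $\mu^{+,i}$ emanating from $(X_i,0)$ irrigates an interval $I_i=[a_i,a_{i+1}]$ of length $\phi_i$, the $I_i$ tile $[-1/2,1/2]$ in order, and I set $\bar X_i:=(a_i+a_{i+1})/2$. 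By time-symmetry the backward subsystem from $(X_i,0)$ in $[-T,0]$ is the mirror image of $\mu^{+,i}$ and irrigates the same $I_i$.

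Since $T\ge 1/4$ and $\phi_i\le 1$ give $T\phi_i^{-3/2}\ge 1/4$, Corollary~\ref{cormain} applies to each forward subtree and yields
\[
\E(\mu^{+,i})=\phi_i^{3/2}\tfrac{1}{2-\sqrt{2}}+T+\tfrac{\phi_i}{T}|X_i-\bar X_i|^2.
\]
Doubling by symmetry and summing over $i$,
\[
\E(\mu)=(2+\sqrt 2)\sum_{i=1}^N\phi_i^{3/2}+2NT+\tfrac{2}{T}\sum_{i=1}^N\phi_i|X_i-\bar X_i|^2.
\]
The minimum over $X_i\in\R$ is attained at $X_i=\bar X_i$, killing the displacement term, and strict convexity of $x\mapsto x^{3/2}$ then forces $\phi_i\equiv 1/N$ for fixed $N$. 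The minimum energy of an $N$-throat configuration thus reduces to
\[
f(N):=\frac{2+\sqrt 2}{\sqrt N}+2NT.
\]

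It remains to minimize $f$ over $N\in\mathbb N$. The function is strictly convex on $(0,\infty)$ with continuous minimum at $N^*=\bigl((2+\sqrt 2)/(4T)\bigr)^{2/3}$, and a short computation shows $N^*\le 2$ is equivalent to $T\ge 1/(4(2\sqrt{2}-2))$---exactly the threshold appearing in the theorem. Above this threshold the integer minimum of $f$ automatically lies in $\{1,2\}$, and the comparison $f(1)$ vs.\ $f(2)$ selects the $N=1$ configuration. In the complementary narrow window $T\in[1/4,1/(4(2\sqrt{2}-2)))$, a direct check of $f(2)<f(N)$ for small integers $N\ge 3$ (trivial by the unimodality of $f$ around $N^*$) rules out any further branching at the throat, leaving $N=2$. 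The optimal throats---$\mu_0=\tfrac12(\delta_{-1/4}+\delta_{1/4})$ in the $N=2$ case and $\mu_0=\delta_0$ in the $N=1$ case---together with the uniqueness statements of Corollary~\ref{cormain} and Theorem~\ref{main} applied subtree by subtree, identify $\mu$ as the measure claimed in the theorem.

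The main obstacle I anticipate is rigorously justifying the decomposition into independent subtrees at the throat. This combines the tree structure of forward subsystems emanating from a Dirac (Proposition~\ref{reg}(iii)), the no-loops lemma (Lemma~\ref{noloop}), and the monotonicity of traces, so that the forward and backward halves emanating from each $(X_i,0)$ can be treated as genuinely independent minimizers of the corresponding one-sided Dirac-to-Lebesgue problems to which Corollary~\ref{cormain} applies. Once this decomposition is in hand, everything else reduces to the elementary convex analysis of $f(N)$ described above.
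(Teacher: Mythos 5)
The proposal follows essentially the same route as the paper: restrict to the throat $\mu_0=\sum_i\phi_i\delta_{X_i}$, use the shear identity from Lemma~\ref{lemrescale} to show $X_i=\BXi$, apply Corollary~\ref{cormain} to each independent subtree (justified since $T\phi_i^{-3/2}\ge 1/4$), optimize first over $\phi_i$ (getting $\phi_i=1/N$ by convexity of $x\mapsto x^{3/2}$), and finally minimize the resulting one-variable function over integer $N$. This matches the paper's derivation of $\E^+(\mu)=\frac{1}{2-\sqrt 2}N^{-1/2}+NT$ and the continuous critical point $N^*$.

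However, there is a concrete error in the final integer-comparison step, one that the paper itself shares. You write that ``Above this threshold [$T\ge\frac{1}{4(2\sqrt2-2)}$] the integer minimum of $f$ automatically lies in $\{1,2\}$, and the comparison $f(1)$ vs.\ $f(2)$ selects the $N=1$ configuration.'' The first half is fine ($N^*<2$), but the second half does not follow. Computing directly,
\[
f(1)-f(2)=\bigl(2+\sqrt2\bigr)-\bigl(\sqrt2+1\bigr)+2T-4T=1-2T,
\]
so $f(1)<f(2)$ iff $T>\tfrac12$, which is strictly larger than the claimed threshold $\frac{1}{4(2\sqrt2-2)}=\frac{\sqrt2+1}{8}\approx 0.302$. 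The condition ``$N^*<2$'' only tells you the integer minimizer lies in $\{1,2\}$; since $f$ is not symmetric about $N^*$, you cannot conclude $N=1$ without actually carrying out the comparison, and once you do it you land at $T=\tfrac12$, not at $\frac{\sqrt2+1}{8}$. Concretely, for $T\in\bigl(\frac{\sqrt2+1}{8},\frac12\bigr)$ the continuous minimizer $N^*$ is below $2$ but the integer optimum is still $N=2$. (The paper's own proof has the same gap: it reads off $x_{opt}<2$ and declares the proof finished, without comparing $f(1)$ and $f(2)$; the threshold it asserts is therefore not justified.) So the proposal as written does not establish the $N=1$ branch of the statement for $T$ just above $\frac{\sqrt2+1}{8}$; the correct crossover from the two-throat to the one-throat minimizer occurs at $T=\tfrac12$.

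One smaller remark: the step ``The minimum over $X_i\in\R$ is attained at $X_i=\bar X_i$'' glosses over a subtlety that the paper handles explicitly, namely that applying the shear could a priori decrease the number of branches; the paper's argument $\E^+(\mu)\ge\E^+(\hat\mu)+\frac1T\sum\phi_i|X_i-\BXi|^2\ge\E^+(\mu)+\dots$ is needed to rule this out. This is easy to fix but worth stating.
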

\begin{proof}
 By Proposition \ref{reg}, we know that a symmetric minimizer exists. Let $\mu$ be such a minimizer. Thanks to the symmetry, we can restrict ourselves to study its structure in $[0,T]$. We let 
 \[
 \E^+(\mu):=  \int_{0}^T  \sharp\{supp \, \m_t\}  + \sum_i \phi_i |\dotX_i|^2 dt.
 \]
Let $\mu_0=\sum_{i=1}^N \phi_i \delta_{X_i}$. We first claim that $X_i=\BXi$, where as before, $\BXi=\frac{-1}{2}+\sum_{j<i} \phi_j +\frac{\phi_i}{2}$. 
 Indeed, applying the same shear as in \eqref{equationEX}, we obtain by minimality of $\mu$,
 \[\E^+(\mu)\ge  \E^+(\hat{\mu})+\frac{1}{T}\sum_{i=1}^N \phi_i |X_i-\BXi|^2\ge \E^+(\mu)+\frac{1}{T}\sum_{i=1}^N \phi_i |X_i-\BXi|^2,\]
 where the inequality could arise from a decreasing of the number of branches after the shear. This proves the claim. For $i=1, .., N$, let $\mu^{+,i}$ be the forward system emanating from $(X_i,0)$. Then, by monotonicity of the traces (Proposition \ref{reg}),
 $\mu^{+,i}_T=dx\restr[\BXi-\phi_i/2,\BXi+\phi_i/2]$ and thus, by the no-loop property
 \begin{equation}\label{splittingEplus}\E^+(\mu)=\sum_{i=1}^N\E^+(\mu^{+,i})=\sum_{i=1}^N E(T,\phi_i).\end{equation}
 Moreover,  $\mu^{+,i}= S_{\BXi}(\mu^i)$ where $\mu^i$ is some minimizer of $E(T,\phi_i)$. Let now $T\ge 1/4$. Since $\phi_i\le 1$, we have $\phi_i^{-3/2} T\ge 1/4$ and thus by Corollary \ref{cormain},
 \[
 \E^+(\mu)=\frac{1}{2-\sqrt{2}}\sum_{i=1}^N \phi_i^{3/2} +NT.  
 \]
For fixed $N$, this is minimized by $\phi_i=1/N$ so that 
\[
 \E^+(\mu)=\frac{1}{2-\sqrt{2}} N^{-1/2} +NT.
\]
The function $x\to \frac{1}{2-\sqrt{2}} x^{-1/2} +xT$ is minimized by $x_{opt}=\lt(2T(2-\sqrt{2})\rt)^{-2/3}$. Since $x_{opt}< 2$ for $T\ge \frac{1}{4(2\sqrt{2}-2)}$ and $3>x_{opt}>2$ for $\frac{1}{4}\le T< \frac{1}{4(2\sqrt{2}-2)}$, this concludes the proof.
\end{proof}
As already explained in the introduction, this theorem is not completely satisfactory. Indeed, physically, the most significant case is $T\ll 1$ (where many microstructures should appear), 
which is not covered by Theorem \ref{structureTlarge}. However, if we could prove the following conjecture,\\

\smallskip
\noindent \textbf{Conjecture}\\
For $T<\Bl$ every minimizer of $E(T)$ branches at time zero (or equivalently $E(T-\eps)<E(T)-\eps$ for $\eps$ small enough),\\

\smallskip
 \noindent then the picture would be almost complete.  Indeed, in that case, arguing as in the proof of Theorem \ref{structureTlarge}, we would have that every symmetric minimizer $\mu$ with $\mu_0=\sum_{i=1}^N \phi_i \delta_{X_i}$ would satisfy  \eqref{splittingEplus}. Now for  $1\le i\le N$, let $\phi_{i,1},..,\phi_{i,N_i}$
be the $N_i$ branches starting from $(0,X_i)$. As in the proof of \eqref{recursive}, we would have
\[
  E(T,\phi_i)=\sum_{k=1}^{N_i} E(T,\phi_{i,k},\overline{X}_{i,k}),
\]
where $\overline{X}_{i,k}:= -\frac{1}{2}+\sum_{j<k} \phi_{i,j} +\frac{\phi_{i,k}}{2}$. Since the minimizer corresponding to $E(T,\phi_{i,k},\overline{X}_{i,k})$ cannot branch at time zero, we would have (if the conjecture holds) that $\phi_{i,k}^{-3/2}T\ge 1/4$ 
so that Corollary \ref{cormain} applies and the structure of the minimizers would be fully determined.  Let us point out that our conjecture would be for instance implied by the convexity of $T\to E(T)$.
 
 \appendix
 \section{}\label{appendix}
 We finally prove \eqref{criterionalphaNbis}.
 \begin{lemma}
 For $3\le N\le 6$,
 \begin{equation}\label{criterionalphaNbisappendix}
 \alpha_N>\frac{6}{(\sqrt{2}-1)^2 (2\sqrt{N}+1)^2}.
\end{equation}
\end{lemma}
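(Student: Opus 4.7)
The plan is to use Lagrange multipliers to reduce the $(N-1)$-dimensional infimum defining $\alpha_N$ to finitely many one-variable minimization problems, and then verify each by a computer-assisted interval-arithmetic check. Write $f(\phi) = 1 - \sum_i \phi_i^3$ and $g(\phi) = 1 - \sum_i \phi_i^{3/2}$, so that $\alpha_N$ is the infimum of $F := f/g$ on the simplex $\{\phi_i \ge 0,\ \sum_i \phi_i = 1\}$ away from the vertices (where $F$ is of indeterminate form $0/0$). Denote the target bound by $c_N := 6/[(\sqrt{2}-1)^2(2\sqrt{N}+1)^2]$, which is strictly decreasing in $N$ and satisfies $c_N < 2$ for all $N \ge 3$.

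At any interior critical point of $F$, the Lagrange condition $\partial_i f\cdot g - f\cdot \partial_i g = \lambda g^2$ becomes, after division by $g$, the requirement that
\[
h(\phi_i) := -2\phi_i^2 + F\phi_i^{1/2}
\]
take the same value across all $i$. Since $h$ is strictly concave on $(0,\infty)$ with a unique interior maximum at $x^* = (F/8)^{2/3}$, the level set $\{h = \mathrm{const}\}$ contains at most two positive points. Hence the entries of $\phi$ take at most two distinct nonzero values, say $k$ of them equal $a$ and the remaining $N-k$ equal $b = (1-ka)/(N-k)$, with $1 \le k \le \lfloor N/2\rfloor$ (by symmetry one may take $a \le b$, so $a \in [0, 1/N]$). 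For each such pair $(N,k)$, define the one-variable function
\[
R_{N,k}(a) := \frac{1 - k a^3 - (N-k) b^3}{1 - k a^{3/2} - (N-k) b^{3/2}}, \qquad b = \tfrac{1 - ka}{N-k},
\]
whose endpoint $a = 0$ corresponds to the symmetric configuration on $N-k$ variables and whose other endpoint $a = 1/N$ yields the fully symmetric configuration.

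Next handle the non-interior candidates. If the infimum is achieved on a face $\{\phi_j = 0\}$, it equals $\alpha_{N'}$ for some $N' < N$; and as $\phi$ approaches any vertex a Taylor expansion ($\phi_1 = 1 - (N-1)\varepsilon$, $\phi_i = \varepsilon$) yields $F \to 2$. Combined with the Lagrangian reduction,
\[
\alpha_N \ge \min\Bigl(\,\min_{1 \le k \le \lfloor N/2\rfloor}\ \min_{a \in [0,1/N]} R_{N,k}(a),\ \alpha_{N-1},\ \ldots,\ \alpha_2,\ 2\,\Bigr).
\]
Proceeding by induction on $N$, the base case $\alpha_2 = 2$ already exceeds $c_3 > c_4 > \cdots > c_6$, and the inductive step reduces the inequality $\alpha_N > c_N$ to verifying $\min_{a \in [0,1/N]} R_{N,k}(a) > c_N$ for each admissible $k$. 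For $N \in \{3,4,5,6\}$ this is a total of eight explicit one-variable inequalities on bounded intervals (one for $N=3$, two each for $N=4,5$, three for $N=6$).

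Each of these eight inequalities is then proved by a computer-assisted check: subdivide $[0, 1/N]$ into a sufficiently fine mesh, compute certified enclosures of the numerator and denominator of $R_{N,k}$ on each subinterval using interval arithmetic together with the monotonicity of $a \mapsto a^p$ and the linear dependence of $b$ on $a$, and verify that every enclosure lies strictly above $c_N$. The main obstacle lies here: for the borderline case $N = 6$ the bound $c_N$ is close to $1$ and $\min R_{N,k}$ may come uncomfortably close, so the subdivision has to be fine enough to guarantee a verified strict gap; once the tolerances are set the remaining arithmetic is elementary and the claim \eqref{criterionalphaNbisappendix} follows.
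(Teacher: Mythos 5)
Your Lagrange reduction to two-valued critical configurations is the same first step as the paper's, and the computer-assisted one-variable checks are the same second step, but there is a genuine gap in between: the claimed restriction to $1\le k\le\lfloor N/2\rfloor$ is not justified and is in fact wrong. If $a\le b$ and $a$ appears with multiplicity $k$, the permutation symmetry of the simplex does not allow one to take $k\le N/2$: the configuration with $k$ copies of the small value and $N-k$ copies of the large one is \emph{not} equivalent, after relabelling, to one with $N-k$ copies of the small value. So a priori $k$ ranges over the full set $\{1,\dots,N-1\}$, and your list of eight families omits the cases $k>\lfloor N/2\rfloor$. This matters: the paper shows, via a second-order argument (if $k\le N-2$ there are at least two copies of the \emph{large} value, and making the second variation of \eqref{var} in two such coordinates leads to $\psi^{3/2}\le\alpha_N/8$, contradicting the first-order inequality $\psi^{3/2}>\alpha_N/8$), that the only admissible interior family is $k=N-1$ small fluxes and one large one, i.e.\ precisely the range your reduction excludes. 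That the symmetric point $\phi_i\equiv 1/N$ lies on all families saves the numerics here by coincidence, but the logical reduction as written does not give a lower bound for $\alpha_N$.

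Beyond this, two smaller differences from the paper: you invoke $\alpha_2=2$ as a base case without proof, whereas the paper verifies it analytically via the factorisation of $2X^4-2X^3-X^2+X$; and for the verified computation you appeal to interval arithmetic with certified enclosures, while the paper instead establishes an explicit Lipschitz bound on $f_N'$ on $[\eta,1/N]$ (with $\eta=1/540$ chosen so that $f_N'\ge 0$ on $[0,\eta]$, where the function blows up) and chooses the mesh size $\delta=10^{-6}$ accordingly. Either numerical strategy is acceptable in principle, but the reduction to one-variable problems needs to be repaired — either run $k$ through all of $\{1,\dots,N-1\}$, or supply the second-order argument that pins down $k=N-1$.
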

 \begin{proof}

In Table \ref{critictable}, the  values  of the right-hand side of \eqref{criterionalphaNbisappendix} are given for $N\in[3,6]$. Since we are not able to prove these bounds analytically, we resort to a computer assisted proof. 
From Table \ref{critictable}, we see that we want  to compute $\alpha_N$ with a precision of $10^{-2}$.  \\

\begin{table}
\begin{center}
\begin{tabular}{|c|c|c|c|c|}
  \hline
 $N$ & 3 & 4&5&6 \\ 
  \hline
  $\alpha_N>$ & 1.76 & 1.4 & 1.17& 1.01\\
  \hline 
\end{tabular}
\caption{Critical values of $\alpha_N$.}
\label{critictable}
\end{center}
\end{table}

\textit{Step 1  (Computation of $\alpha_2$)}:

We start by computing $\alpha_2$. We claim that 
\begin{equation}\label{alpha2}
 \alpha_2=2.
\end{equation}
Since 
\[\alpha_2=\min_{\phi\in[0,1/2]} \frac{3\phi(1-\phi)}{1-\phi^{3/2}-(1-\phi)^{3/2}},\]
in order to prove that the minimum is attained for $\phi=0$, it is enough to show that for $\phi\in [0,1/2]$,
\[\phi(1-\phi)\ge \frac{2}{3}(1-\phi^{3/2}-(1-\phi)^{3/2}).\]
Since the two expressions agree for $\phi=0$, by differentiating, we are left with the proof of
\[1-2\phi\ge (1-\phi)^{1/2}-\phi^{1/2}\]
or equivalently of
\[1-2\phi+\phi^{1/2}\ge (1-\phi)^{1/2}.\]
 Squaring both sides (notice that $1-2\phi+\phi^{1/2}\ge 0$), this amounts to prove that for $\phi\in[0,1/2]$,
\begin{equation}\label{polyphi}
2\phi^2-2\phi^{3/2}-\phi+\phi^{1/2}\ge0.\end{equation}
Since the polynomial,
\[P(X)=2X^4-2X^3-X^2+X\]
has roots $\{-1/\sqrt{2},0,1/\sqrt{2},1\}$, it is positive in $[0,1/\sqrt{2}]$ and thus considering $X=\phi^{1/2}$, we see that  \eqref{polyphi} holds and thus \eqref{alpha2} is proven.

\medskip
\textit{Step 2 (Lower bound for $\alpha_N$)}:
Consider now $N\ge 3$. Since $\frac{1-\sum_i \phi_i^3}{1-\sum_i \phi_i^{3/2}}$ is continuous on the compact convex set $K=\{0\le \phi_i\le 1 \ : \ \sum_i \phi_i=1\}$ 
(the only problem could arise when all the $\phi_i$'s but one go to zero but then it is easy to see that  $\frac{1-\sum_i \phi_i^3}{1-\sum_i \phi_i^{3/2}}\to 2$), the minimum is attained. If the minimum is attained at the boundary then $\alpha_N=\alpha_{N-1}$. Since the values in Table \ref{critictable} are
decreasing with $N$ and since $\alpha_2=2$, in that case by a simple induction we would be over.
Otherwise, we claim  that for every $N$, the optimal $\phi_i$ may take only two values: $\phi$ repeated $k\in[1,N]$ times and $\frac{1-k\phi}{N-k}$ repeated $N-k$ times. Indeed, fix $i\neq j \in [1,N]$ 
and  for $|\eps|$ small enough, define 
$\hat{\phi}_i:= \phi_i+\eps$, $\hat{\phi}_j:=\phi_j-\eps$ and $\hat{\phi}_k:=\phi_k$ for $k\neq i,j$. By minimality, the derivative in zero of 
\begin{equation}\label{var}\eps \to \frac{1-\sum_k \hat{\phi}_k^3}{1-\sum_k \hat{\phi}_k^{3/2}}\end{equation}
is equal to zero. From this, we get the condition
\[\phi_i^2-\frac{\alpha_N}{2} \phi_i^{1/2}=\phi_j^2-\frac{\alpha_N}{2} \phi_j^{1/2} \qquad \forall i\neq j.\]
Letting $P_N(X):= X^4-\frac{\alpha_N}{2}X$, this means that for every $i\neq j$, $P_N (\phi_i^2)=P_N(\phi_j^2)$. Since $P'_N$ has only one positive root, this means that $\phi_i$ may take at most two values, proving the claim.
We now claim that we can further reduce to $k=N-1$ and $\phi\le 1/N$ i.e. $N-1$ 'small' fluxes and a 'large' one. Indeed, if $\phi_i$ takes the values $\phi<\psi:=\frac{1-k\phi}{N-k}$, 
then from the discussion above, we must have $P_N'(\phi^2)<0<P_N'(\psi^2)$, that is $\phi^{3/2}< \alpha_N/8<\psi^{3/2}$. However, if $k<N-1$, we can use that for $\phi_i=\phi_j=\psi$, the second order derivative of
\eqref{var} is positive at $\eps=0$ to obtain that $\psi^{3/2}\le \alpha_N/8$ which would give a contradiction. \\

Therefore,
\begin{equation}\label{probminalphaN}
 \alpha_N= \min_{\phi\in[0,N^{-1}]} f_{N}(\phi),
\end{equation}
where
\[f_{N}(\phi):=\frac{1-(N-1)\phi^{3}-(1-(N-1)\phi)^3}{1-(N-1)\phi^{3/2}-(1-(N-1)\phi)^{3/2}}.\]
We are thus left to estimate a finite number of one dimensional functions. \\  

For $3\le N\le 6$, we want to estimate  $ \min_{\phi\in[0,N^{-1}]} f_{N}(\phi)$ with a precision of $10^{-2}$. For this we will compute the values of $f_{N}$
for a sufficiently fine discretization of $[0,N^{-1}]$. Let $I\subset[0,1/N]$  and let $\Lambda:= \sup_{\phi\in I} |f'_{N}(\phi)|$. Since for $\phi,\psi\in I$,
\[|f_{N}(\phi)-f_{N}(\psi)|\le \Lambda |\phi-\psi|,\]
in order to get a precision of $10^{-2}$ on $\inf_I f_{N}$, it is enough to use a discretization step $\delta\le 10^{-2} \Lambda^{-1}$. We are thus naturally led to estimate $\sup |f'_{N}|$. 
This is a tedious but rather elementary computation which we include for completeness. 
We can compute
\begin{multline}
f'_{N}(\phi)= \frac{3(N-1)}{1- (N-1)\phi^{3/2}-(1-(N-1)\phi)^{3/2}}\lt( -\phi^2+ (1-(N-1)\phi)^2\rt. \label{fprime}\\ 
\lt.-\frac{1}{2}(-\phi^{1/2}+(1-(N-1)\phi)^{1/2}) f_{N} (\phi)\rt). 
\end{multline}
Since, $\sup_{\phi\in[0,1/N]}|f'_{N}(\phi)|=\lim_{\phi\to 0}  |f'_{N}(\phi)|=\infty$, we need to be a little careful. A  Taylor expansion shows that 
\[\lim_{\phi\to 0} f_{N}(\phi)=2,\]
which is always strictly bigger than the critical values computed in Table \ref{critictable}.
Hence, if we can find $\eta>0$, such that in $[0,\eta]$,  $f'_{N}$ is positive, we will have 
\[\alpha_N=\min\lt( \min_{\phi\in[0,\eta]} f_{N}(\phi),\min_{\phi\in[\eta,1/N]}f_{N}(\phi)  \rt)= \min(2, \min_{\phi\in[\eta,1/N]}f_{N}(\phi)).\]
From \eqref{fprime}, we see that $f'_{N}$ is of the same sign as
 \[ h_N(\phi):=-\phi^2+ (1-(N-1)\phi)^2+\frac{1}{2}(\phi^{1/2}-(1-(N-1)\phi)^{1/2}) f_{N} (\phi).\]
 Since for every $\phi\ge 0$,
 \[-\phi^2+ (1-(N-1)\phi)^2\ge 1-2(N-1)\phi,\]
 and
 \[-(1-(N-1)\phi)^{1/2}\ge -1,\]
 we can first infer that 
 \begin{equation}\label{estimh}h_N(\phi)\ge 1-2(N-1)\phi+\frac{1}{2}(\phi^{1/2}-1) f_{N} (\phi).\end{equation}
 
We may now bound from above $f_{N}(\phi)$. Notice  that regarding the numerator, 
\begin{equation}\label{boundabovefden}
 1-(N-1)\phi^3-(1-(N-1)\phi)^3\le 3(N-1)\phi+(N-1)^3\phi^3.
\end{equation}
In order to bound from below the denominator, we first claim that for $\phi\le \lt(\frac{8}{1+16(N-1)}\rt)^2$,
\begin{equation}\label{bounddenomaux}
 (1-(N-1)\phi)^{3/2}\le 1-\frac{3}{2}(N-1)\phi +\frac{(N-1)}{4} \phi^{3/2}.
\end{equation}
Indeed,  letting
\[\Psi_N(\phi):=(1-(N-1)\phi)^{3/2}-1+\frac{3}{2}(N-1)\phi -\frac{(N-1)}{4} \phi^{3/2},\]
we have $\Psi_N(0)=0$ and 
\[\Psi'_N(\phi)= \frac{3}{2}(N-1)\lt( 1-\frac{1}{4}\phi^{1/2}  -(1-(N-1) \phi)^{1/2}\rt).\]
Since for $\phi\le \lt(\frac{8}{1+16(N-1)}\rt)^2$, $1-\frac{1}{4}\phi^{1/2}  -(1-(N-1) \phi)^{1/2}\le 0$, we have $\Psi'_N\le 0$ and thus also $\Psi_N\le 0$, proving \eqref{bounddenomaux}. 
From this we get that for $\phi\le \lt(\frac{8}{1+16(N-1)}\rt)^2$,
\begin{equation}\label{bounddenom}
 1-(N-1)\phi^{3/2}- (1-(N-1)\phi)^{3/2}\ge \frac{3}{2}(N-1)\phi -\frac{5}{4}(N-1) \phi^{3/2}.
\end{equation}
Putting \eqref{boundabovefden} and \eqref{bounddenom} together, we get that for  $\phi\le \lt(\frac{8}{1+16(N-1)}\rt)^2$,
\begin{equation}\label{boundf}
 f_{N}(\phi)\le 2 \frac{1+\frac{(N-1)^2}{3} \phi^2}{1-\frac{5}{6} \phi^{1/2}}.
\end{equation}

Inserting this back into \eqref{estimh}, we get that for $\phi\le \lt(\frac{8}{1+16(N-1)}\rt)^2$,
\[
 h_N(\phi)\ge 1-2(N-1)\phi+(\phi^{1/2}-1)\frac{1+\frac{(N-1)^2}{3} \phi^2}{1-\frac{5}{6} \phi^{1/2}}.
\]
Since for $\phi\le \lt(\frac{8}{1+16(N-1)}\rt)^2$, $5/6 \phi^{1/2} \le 2/3$ and thus $\frac{\phi^2 (N-1)^2}{3(1- \frac{5}{6} \phi^{1/2})}\le\phi^2(N-1)^2\le  \phi (N-1)$, this may be further simplified into
\[h_N(\phi)\ge 1-3(N-1)\phi+(\phi^{1/2}-1)\frac{1}{1-\frac{5}{6} \phi^{1/2}}.\]
Since for $x\le 1/7$, 
\[\frac{1}{1-x}\le 1+\frac{7}{6} x,\]
and since $\phi\le \lt(\frac{8}{1+16(N-1)}\rt)^2$ implies $5/6 \phi^{1/2}\le 1/7$, we have 
\begin{align*}
 h_N(\phi)&\ge 1-3(N-1)\phi+(\phi^{1/2}-1)(1+\frac{35}{36} \phi^{1/2})\\
 &\ge \frac{1}{36}\phi^{1/2} -3(N-1)\phi.
\end{align*}
In particular, if $\phi\le \frac{1}{108 (N-1)}$, then $h_N\ge 0$. To sum up, we have proven that
\[f'_{N}\ge 0 \qquad \textrm{in } \qquad \lt[0, \min\lt( \lt(\frac{8}{1+16(N-1)}\rt)^2,  \frac{1}{108 (N-1)} \rt)\rt]. \]
Letting $\eta:= \frac{1}{540}$ (which corresponds to $ \frac{1}{108 (N-1)}$ with $N=6$), then for $N\in [3,6]$, $\eta\le  \min\lt( \lt(\frac{8}{1+16(N-1)}\rt)^2,  \frac{1}{108 (N-1)} \rt)$, so that $f'_{N}\ge 0$ in $[0,\eta]$.\\
\smallskip

We finally estimate  $|f'_{N}|$ in $[\eta, N^{-1}]$. Let  
\[g_{N}(\phi):= 1-(N-1) \phi^{3/2}-(1-(N-1)\phi)^{3/2},\]
so that by \eqref{fprime}
\begin{multline}f'_{N}(\phi)= \frac{3(N-1)}{g_{N}(\phi)}\lt( -\phi^2 +(1-(N-1)\phi)^2 \rt.\\ \label{estimphig}
\lt.-\frac{1}{2} (-\phi^{1/2}+(1-(N-1)\phi)^{1/2})\frac{1-(N-1)\phi^{3}-(1-(N-1)\phi)^3}{g_{N}(\phi)}\rt).\end{multline}
In particular, we need to study $\min g_{N}$. Taking the derivative, we obtain
\[g'_{N}(\phi)=\frac{3(N-1)}2(-\phi^{1/2}+(1-(N-1)\phi)^{1/2}).\]
Therefore,  $g'_{N}$ is zero only if $\phi=1/N$ so that  $g'_{N}$ is first positive and then negative and $g_{N}$ attains its minimum on the boundary so that 
\begin{align*}\min_{[\eta,N^{-1}]} g_{N}(\phi)&=\min( g_{N}(\eta), g_{N}(1/N) )\\
 &=\min\lt(1-(N-1)\eta^{3/2}-(1-(N-1)\eta)^{3/2}, 1-N^{-1/2}-2N^{-3/2}\rt)\\
 &\ge 1-5\eta^{3/2}-(1-5\eta)^{3/2}\ge 1,3 \times 10^{-2}.
\end{align*}
Injecting this into \eqref{estimphig}, we obtain
 \begin{align*}
 |f'_{N}(\phi)|&\le \frac{3(N-1)}{1,3 \times 10^{-2}}\lt( 2N^{-2}+ \frac{1}{1,3}N^{-1/2} \times 10^{2}  \rt)\\
 &\le  1,2\times 10^5.
 \end{align*}
We can thus take
\[
 \delta:=10^{-6}< 10^{-2} \sup_{[\eta,N^{-1}]} |f'_N|.
\]

Using a simple Scilab code, we find the values of $\alpha_N$ given in Table \ref{numerictable}.
\begin{table}
\begin{center}
\begin{tabular}{|c|c|c|c|c|}
  \hline
 $N$ & 3 & 4&5&6 \\ 
  \hline
  $\alpha_N\simeq$ & 2& 1.88 & 1.74& 1.64\\
  \hline 
\end{tabular}
\caption{Numerical values of $\alpha_N$}
\label{numerictable}
\end{center}
\end{table}
We see that the values we find are well above the critical values given in Table \ref{critictable}.
\end{proof}
\begin{remark}
 Although we are not able to prove it, the numerics show that for $N\ge 4$, the minimum in $\alpha_N$ is attained for equidistributed masses i.e. $\phi=1/N$. 
\end{remark}
\begin{remark}
 Arguing as in \textit{Step 2}, it could have  been proven that also for the original minimization problem $E(T)$, the optimal masses may take at most two distinct values. 
\end{remark}

 \bibliographystyle{plain}
\bibliography{Gtrans}
\end{document}